\numberwithin{equation}{section}
\newtheorem{theorem}{Theorem}
\newtheorem{lemma}{Lemma}
\newtheorem{corollary}{Corollary}
\theoremstyle{definition}
\newtheorem{remark}{Remark}
\DeclareMathOperator*\uplim{\overline{lim}}
\DeclareMathOperator*\lowlim{\underline{lim}}
\newcommand{\calI}{\mathcal{I}}
\newcommand{\bfrho}{\boldsymbol{\rho}}
\newcommand{\bfG}{\boldsymbol{G}}
\newcommand{\bfH}{\boldsymbol{H}}
\newcommand{\E}{\mathbb{E}}
\newcommand{\p}{\mathbb{P}}
\newcommand{\dd}{\mathrm{d}}
\title{A large deviation principle for the multispecies stirring process}
\author[1]{Francesco Casini}
\author[2]{Frank Redig}
\author[2]{Hidde van Wiechen}
\affil[1]{\small{Università di Modena e Reggio Emilia, FIM, Modena, Italy}}
\affil[2]{Delft Institute of Applied Mathematics, TU Delft, Delft, The Netherlands}
\date{\today}
\begin{document}

\maketitle
%\tableofcontents

\begin{center}
    \section*{Abstract}
    \end{center}
\begin{changemargin}{20mm}{20mm}
In this paper we consider the multispecies stirring process on the discrete torus. We prove a large deviation principle for the trajectory of the vector of densities of the different species. The technique of proof consists in extending the method of the foundational paper \cite{KOV} based on the superexponential estimate to the multispecies setting. This requires a careful choice of the corresponding weakly asymmetric dynamics, which is parametrized by fields depending on the various species. We also prove the hydrodynamic limit of this weakly asymmetric dynamics, which is similar to but different from the ABC model in \cite{ABC-hd}.
Using the appropriate asymmetric dynamics, we also obtain that the mobility matrix relating the drift currents to the fields coincides with the covariance matrix of the reversible multinomial distribution, which then further leads to the Einstein relation.

\emph{Keywords}: multispecies, large deviation principle, hydrodynamic limit, weakly asymmetric model, exponential martingale, Girsanov formula, exponential tightness.

\end{changemargin}
\newpage

\section{Introduction}
\subsection{Motivations}
Interacting particle systems \cite{spitzer,liggett1985} are used to study how macroscopic equations emerge from microscopic stochastic dynamics, as well as in the study of driven non-equilibrium systems, and their non-equilibrium steady states.
Among these, a well-studied process is the so called Simple Symmetric Exclusion Process (SSEP), where particle interactions are governed by an exclusion constraint that permits at most one particle per site. This model (and various modifications of it) has been extensively studied in the literature, both in the study of scaling limits \cite{KipnisLandim,demasiPresutti,seppalainen, spohn2012large} as well as in the understanding of microscopic properties of non-equilibrium steady states \cite{schutz1994, giardina2009duality} . The study of large deviations of the trajectory of the empirical density field for the SSEP was initiated in \cite{KOV} (see also \cite{guo1988nonlinear} where the gradient method was introduced in the context of Ginzburg-Landau models). The method developed there, valid for so-called gradient systems is based on the superexponential estimate, which allows to replace empirical averages of local functions by functions of the density field. This implies that one can prove with the same method at the same time the hydrodynamic limit of the weakly asymmetric exclusion process as well as the large deviations from the hydrodynamic limit for the SSEP.

The study of systems with multiple conserved quantities and their hydrodynamic limits has gained substantial interest in recent times
see e.g. \cite{ABC-hd}, \cite{ABC-KPZ}, \cite{Frank-Casini-Cristian-Fluct,Hidde-Flucutations} and references therein (see also e.g. \cite{quastel1992diffusion} for an earlier reference). In particular, these results constitute rigorous versions of fluctuating hydrodynamics or mode coupling theory, \cite{spohn2015nonlinear}, \cite{spohn2014nonlinear}, see also
\cite{spohn2012large}.
Other motivation for multispecies (and also connected multi-layer) models and their scaling limits is the phenomenon of uphill diffusion \cite{casini2023uphill,floreani2022switching} and systems of active particles.

The process we study in our paper is the multispecies analogue of the SSEP, known as the multispecies stirring process \cite{zhou,vanicat2017exact,casiniDualityStirring}.%, which also operates under a hard-core exclusion constraint, allowing at most one particle of any type per site.
In this process, at every site there is at most one particle, which can be of type $\alpha, \alpha\in \{1,\ldots,n\}$. The absence of a particle is called a particle of type zero.
To each nearest neighbor edge is associated a Poisson clock of rate 1, different Poisson clocks being independent. When the clock of an edge rings, the occupancies of that edge are exchanged. An exchange between a particle of type $\alpha, \alpha\in \{1,\ldots,n\}$ at site $x$ and an empty site at
site $x+1$ is of course the same as a jump of the particle from $x$ to $x+1$.
It is well-known that the hydrodynamic limit for the densities of the $n$ types of particles is a system of uncoupled heat equations, and in \cite{Frank-Casini-Cristian-Fluct} it is also proved that the fluctuations around this hydrodynamic limit is an infinite dimensional Ornstein-Uhlenbeck process.
Other results on the multispecies stirring process include duality, and exact formulas for the moments in the non-equilibrium steady state of a boundary driven version using duality combined with integrability.
To our knowledge, no explicit formula exists for the large deviation rate function for the density profile in the non-equilibrium steady state, as is the case e.g. for the SEP, see \cite{derrida2002large}.
In the setting of the macroscopic fluctuation theory, the rate function in the non-equilibrium steady state is strongly related to the rate function for the trajectory of the empirical density profile, i.e., the large deviations around the hydrodynamic limit.
Therefore, in order to make progress in the understanding of non-equilibrium large deviations in multispecies models, it is natural to study the large deviations around the hydrodynamic limit for the multispecies stirring process.
To the best of our knowledge, no rigorous results have been established in the context of dynamic large deviations for the multispecies stirring process. In this paper we implement the method of \cite{KOV} for gradient systems,  based on the superexponential estimate,
(see also \cite{KipnisLandim} chapter 10) in our multispecies setting.
The study of the large deviation principle for the multispecies stirring process, relies on the introduction of a well-chosen weakly asymmetric process, where the rates are deformed by an exponential tilting, i.e., by introducing weak and slowly varying (in space) external fields that introduce a drift on the particles of various types. To understand the probability of  deviating trajectories for the densities, one has to choose these fields governing the asymmetry in such a way that in the modified dynamics, the deviating trajectory becomes typical. The large deviation rate function is then roughly the relative entropy of the modified dynamics w.r.t. original dynamics which can be computed with the Girsanov formula.  In particular, exactly as is done in \cite{KOV} for the single species case, we also prove as byproduct the hydrodynamic limit of this weakly asymmetric multispecies process, which is a system of nonlinear coupled parabolic equations, closely related to the ABC model \cite{ABC-hd}. In this limiting partial differential equation (PDE), in addition to diffusion, a drift term is introduced into the currents, which makes this system appealing for describing multi-component diffusion processes in applications \cite{rudan2015physics,colangeli2023uphill}.
The relation between the drift currents and the fields is via the symmetric Onsager matrix, which coincides with the covariance matrix of the multinomial reversible measures.

As a perspective towards further research, this work could serve as a starting point for various questions. These include to explore the extension of large deviation principles and hydrodynamic limits to boundary-driven systems in the multispecies setup, as previously done for the single species case \cite{LD-boundary}. Additionally one can investigate the density field fluctuations in the weakly asymmetric multispecies stirring process, analogous to what has been done for the ABC model in the context of the Kardar-Parisi-Zhang (KPZ) universality class \cite{ABC-KPZ}. Moreover, it would be of interest to apply these techniques to multi-layer systems \cite{Hidde-Flucutations}, where the geometry consists of two layers with SEP dynamics occurring within each layer, coupled with particle exchange between the layers.
%Based on \cite{KOV}

\subsection{Organization of the paper and main results}
Starting from the literature, in Section \ref{model section} we first recall the definition of the multispecies stirring process, on the geometry of a torus, reporting also its reversible measure. Then, in Section \ref{wasep section} we define a weakly asymmetric version of the multispecies stirring process where the transition rates are perturbed by a family of potentials, indexed by the species involved in the transition and dependent on space and time. 

Finally, in Section \ref{SEE section} we state the so called \textit{superexponential estimate}. This estimate turns out to be a useful tool in the proof of the hydrodynamic limit of the weakly asymmetric model and in the proof of the large deviation principle as well. The proof of this estimate goes beyond the main scope of this paper, therefore we report it in Appendix \ref{proof of superexponential estimate}. 

In Section \ref{section 3} we state the hydrodynamic limit of the weakly asymmetric model. We postpone the proof to Appendix \ref{Appendix B} since it can be shown by standard methods. Then, in Section \ref{choice of fields}, we make a specific choice of potentials needed for the proof of the large deviation principle. This choice is further motivated by Einstein relations between diffusion, mobility and compressibility matrices.

In section \ref{LDP section}, we proceed to state and prove the large deviation principle. With both the original model and the weakly asymmetric model established, we first obtain the Radon-Nikodym derivative of their respective path-space measures in section \ref{radon-Nikodym section}. This can be computed using the Girsanov formula and will be equal to exponential martingale associated with the original model. For the upper bound, we first establish the exponential tightness of the path-space measures in section \ref{exponential tightness section} (which reduces the proof to verifying the upper bound for compact sets instead of closed sets). The upper bound is then derived in Section \ref{upper bound} using the martingale property of the Radon-Nikodym derivative.

For the lower bound, which we prove in Section \ref{lower bound}, we demonstrate the relationship between the large deviation rate function and the hydrodynamic limit of the weakly asymmetric model. Specifically, for every deviating path, we show the existence of a potential such that this path becomes typical under the weakly asymmetric dynamics. This leads to a new formulation of the large deviation rate functional, expressed as the norm of this potential in an appropriate Sobolev space. Finally, using this relationship, we are able to demonstrate the lower bound. 

\paragraph{Acknowledgments.}  The authors acknowledge financial support from PNRR MUR project  ECS\_ 0000033\_ECOSISTER, which also enabled a visit to TU Delft in April 2024. We are grateful to the reading group on large deviations in interacting particle systems, which included the authors, Jonathan Junné, Richard Kraaij, and Rik Versendaal, for valuable discussions. Many thanks go to Cristian Giardinà for insightful discussions and suggestions regarding Einstein relations. FC extends thanks to Cecilia Vernia and Luca Selmi for their helpful discussions. Research supported in part by GNFM-INdAM.
\section{The multispecies stirring process}\label{model section}
In this section, we describe the multispecies stirring  process. We first examine the symmetric case, then we define a weakly asymmetric version in which the transition rates are "deformed" through a potential.\\
In both cases, we consider the geometry of a one-dimensional torus with $N$ sites, denoted by $\mathbb{T}_{N} = \mathbb{Z}/N\mathbb{Z}$. Additionally, for simplicity, we consider the scenario with two types of particles (in addition to vacancies, also called holes). The occupation variable is denoted by $\bm{\eta}=(\eta_{\alpha}^{x})_{\alpha\in \{0,1,2\}\,, x\in \mathbb{T}_{N}}$, where $\eta_{\alpha}^{x}$ represents the number of particles or holes (labeled by $\alpha$) at site $x$. For any time $t\geq 0$, the configuration of the process is denoted by $\left(\bm{\eta}(t)\right)_{t\geq 0}$. \\
As a convention, we use the labels $\alpha=1,2$ to distinguish particles of species $1$ and $2$, and we use the label $\alpha=0$ to denote the holes. The term "holes" is motivated by the fact that its occupation variable is determined once we know the occupation variable of the species of particles $1$ and $2$, due to the so called "exclusion constraint"
\begin{align}\label{exclusion-constraint}
    \eta_{0}^{x}=1-\eta_{1}^{x}-\eta_{2}^{x}\qquad \forall x\in \mathbb{T}_{N}\,.
\end{align} 
Therefore, the configuration space reads
\begin{equation}\label{state space}
    \Omega=\bigotimes_{x=1}^{N}\Omega_{x}\qquad \text{where}\qquad \Omega_{x}=\left\{\eta^{x}=(\eta_{0}^{x},\eta_{1}^{x},\eta_{2}^{x})\;:\;\sum_{\alpha=0}^{2}\eta_{\alpha}^{x}=1\right\}\,.
\end{equation}
In the literature, the multispecies stirring process has also been considered with maximal occupancy per site higher than $1$ (see \cite{zhou,casiniDualityStirring}), and also the boundary driven case has been considered (see \cite{vanicat2017exact,casiniDualityStirring}).\\
In this paper, on the same geometry and configuration space, we introduce two types of dynamics: the symmetric and the weakly asymmetric ones. In the symmetric dynamics, each transition occurs at the same rate to both the left and the right. In contrast, the weakly asymmetric dynamics introduces a weak asymmetry in the rates, resulting in a "drift" in the particles' jumps.
\subsection{The symmetric case}
In the symmetric case, the dynamics consists in swapping occupancies of nearest neighbor sites according to independent rate 1 Poisson processes. More precisely, considering any bond $(x,x+1)$, any particle or hole present at site $x$ is exchanged with any particle or hole present at site $x+1$. For any $\alpha,\beta \in \{0,1,2\}$ such that $
\eta^x_\alpha \eta^{x+1}_\beta = 1$ we now define the configuration $\bm \eta^{x,x+1}_{\alpha,\beta}$ obtained by swapping the occupancies at $x$ and $x+1$ via
\[
\bm\eta^{x,x+1}_{\alpha,\beta}(y) = 
    \bm\eta^x_\alpha - \delta_\alpha^x + \delta^x_\beta - \delta^{x+1}_\beta +\delta^{x+1}_\alpha 
\]
%In brief, assuming that at a certain time the configuration at the bond is $(\eta_{\alpha}^{x},\eta_{\beta}^{x+1})$ with $\alpha,\beta\in \{0,1,2\}$, we have that 
%\begin{align}
%    (\eta_{\alpha}^{x},\eta_{\beta}^{x+1}) \;\xrightarrow{\text{Rate }1}\; (\eta_{\alpha}^{x}-\delta_{\alpha}^{x}+\delta_{\beta}^{x},\eta_{\beta}^{x+1}+\delta_{\alpha}^{x}-\delta_{\beta}^{x})
%\end{align}
where $\pm \delta_{\alpha}^{x}$ indicates that a particle or vacancy denoted by $\alpha$ is added or removed at site $x$. If $
\eta^x_\alpha \eta^{x+1}_\beta \neq 1$ then we make the convention that $\bm\eta^{x,x+1}_{\alpha,\beta} = \bm \eta$. The infinitesimal generator of this process is then given by a superposition of local operators as
\begin{equation}\label{symmetirc-genrator}
    \mathcal{L}=\sum_{x=1}^{N}\mathcal{L}_{x,x+1}\qquad \text{where}\qquad \mathcal{L}_{N,N+1}=\mathcal{L}_{N,1}
\end{equation}
where, for every local function $f:\Omega\to \mathbb{R}$
\begin{equation}
    \mathcal{L}_{x,x+1}f(\bm{\eta})=\sum_{\alpha,\beta=0}^{2}\eta_{\alpha}^{x}\eta_{\beta}^{x+1}\left(f(\bm{\eta}^{x,x+1}_{\alpha,\beta})-f(\bm{\eta})\right)\,.
\end{equation}
%The process is reversible with respect to a measure that is a product over sites, where each marginal is distributed as a multinomial with parameters $1$ and probabilities $p_{0}, p_{1}, p_{2} \in [0,1]$, with $p_{0} + p_{1} + p_{2} = 1$. Specifically, denoting the reversible measure by $\nu_{N}$, we have
This process has  reversible product measures with   multinomial marginals:
\begin{equation}\label{reversible-MS-symm}
    \nu^{\bm p}_{N}=\bigotimes_{x=1}^{N}\nu_{N,x}^{\bm p}\qquad \nu_{N,x}^{\bm p}\sim \text{Multinomial}(1,p_{1},p_{2})\,.%\quad \text{such that}\quad p_{1}+p_{2}+p_{3}=1
\end{equation}
Here $\bm p = (p_1,p_2)$ where $p_1$ and $p_2$ are the probabilities of having a particle of type $1$ respectively $2$ at any given site. Furthermore, under the measure $\nu_N^{\bm p}$, the probability to have no particle at any site $x \in \mathbb{T}_N$ is equal to $p_0 := 1-p_1-p_2$.
Reversibility of the measures $\nu_N^{\bm p}$ follows from the  detailed balance condition. In the following, it will be useful to denote by $\nu_{N}^{1/3}$ the reversible measure with multinomial densities given by $p_{1} = p_{2} = \frac{1}{3}$. 

We denote by $\mathbb{T}=[0,1]$ the one-dimensional torus. For $\bm{\gamma}(\cdot) = (\gamma_{1}(\cdot), \gamma_{2}(\cdot))$ where  $\gamma_{\alpha}: \mathbb{T} \to [0,1]$ for $\alpha \in \{1,2\}$ are smooth functions,  we introduce the local equilibrium product measures associated to $\bm \gamma$: 
%Reversibility can be easily shown by imposing the detailed balance condition. In the following it will be useful to denote by $\nu_{N}^{1/3}$ the reversible measure with densities given by multinomial  with $p_{1}=p_{2}=\frac{1}{3}$. Moreover, by denoting by $\mathbb{T}$ the torus in the continuum, we consider a profile $\gamma_{\alpha}:\mathbb{T}\to [0,1]$ for $\alpha\in \{1,2\}$. Then, by denoting $\bm{\gamma}(\cdot)=(\gamma_{1}(\cdot),\gamma_{2}(\cdot))$ we introduce the reversible measure given by a product over site of multinomial distributions with profile $\bm{\gamma}$, namely
\begin{align}\label{reversible-measure-withProfile}
    \nu^{\bm{\gamma}(\cdot)}_{N}:=\bigotimes_{x=1}^{N}\nu^{\bm{\gamma}(\cdot)}_{N,x},
\end{align}
where the marginals over each site are multinomial and given by
\begin{align}
    \nu^{\bm{\gamma(\cdot)}}_{N,x}\left(\left\{\bm{\eta}\,:\, \eta_{\alpha}^{x}=1\right\}\right)=\begin{cases}
        \gamma_{\alpha}\left(\frac{x}{N}\right)\qquad &\text{if}\quad \alpha\in \{1,2\},\\
        1-\gamma_{1}\left(\frac{x}{N}\right)-\gamma_{2}\left(\frac{x}{N}\right)&\text{if}\quad \alpha=0.
        \end{cases}
\end{align}
\subsection{The weakly asymmetric stirring process}\label{wasep section}
We introduce a weakly asymmetric version of the multispecies stirring process, which will play a crucial role in the study of large deviations. We parametrize the weak asymmetry by three smooth functions $\bm{H}=(H_{01},H_{02},H_{12})$. Moreover, we define for $\alpha < \beta$
\begin{equation}\label{requirement-fields}
    H_{\beta \alpha}(u,t) := -H_{\alpha\beta}(u,t)\qquad \forall u\in \mathbb{T},\quad \forall t\in [0,T]\,.
    \end{equation}
    The reason for this antisymmetric choice \eqref{requirement-fields} will be clarified later.  The time-dependent generator of the weakly asymmetric multispecies stirring process parametrized by $\bm H$ is then given by
    \begin{equation}\label{weak-asymmetric-genrator}
        \mathcal{L}^{\bm{H}}(t)=\sum_{x=1}^{N}\mathcal{L}_{x,x+1}^{\bm{H}}(t)\qquad \text{such that}\qquad \mathcal{L}_{N,N+1}^{\bm{H}}(t)=\mathcal{L}_{N,1}^{\bm{H}}(t)
    \end{equation}
    where the local operators are defined as
\begin{equation}\label{weak-asymmetric-genrator-densities}
\begin{split}
    \mathcal{L}^{\bm H}_{x,x+1}(t)f(\bm{\eta})=\sum_{\alpha,\beta=0}^{2}&c^{\bfH,\alpha\beta}_{(x,x+1)}(t)\left(f(\bm{\eta}^{x,x+1}_{\alpha,\beta})-f(\bm{\eta})\right)
    %.\\&\left.+c_{(x+1,x),(a,b)}^{H_{\alpha\beta}}\left(f(\eta-\delta_{\alpha}^{x+1}+\delta_{\beta}^{x+1}+\delta_{\alpha}^{x}-\delta_{\beta}^{x})-f(\eta)\right)\right\}
    \end{split}\,,
\end{equation}
and where
%\footnote{To alleviate the notation, in the following we will often remove the dependence on time of $c_{(x,x+1),(\alpha,\beta)}^{H_{\alpha\beta}}$ by writing $c_{(x,x+1),(\alpha,\beta)}^{H_{\alpha\beta}}$.}
\begin{equation}\label{WA-transitionRates}
    c^{\bfH,\alpha\beta}_{(x,x+1)}(t)
    =\exp\Big(\nabla_NH_{\alpha\beta}(\tfrac{x}{N},t)\Big)\eta_{\alpha}^{x}\eta_{\beta}^{x+1}.
        %c_{(x+1,x),(a,b)}^{H_{\alpha\beta}}&=\exp{\left(H_{\alpha\beta}\left(\frac{x}{N}\right)-H_{\alpha\beta}\left(\frac{x+1}{N}\right)\right)}\eta_{\beta}^{x}\eta_{\alpha}^{x+1}
\end{equation}
Here $\nabla_N$ denotes the discrete gradient, i.e., 
\begin{equation}
    \nabla_NH_{\alpha\beta}(\tfrac{x}{N},t) = H_{\alpha\beta}(\tfrac{x+1}{N},t) - H_{\alpha\beta}(\tfrac{x}{N},t).
\end{equation}
Later on we will omit the explicit dependence on $t$ in \eqref{weak-asymmetric-genrator-densities} for notational simplicity.
%Here $H_{\alpha\beta}$ are functions on $\mathbb{R}$ (with weak first derivatives) that introduce an interaction between the two species $\alpha$ and $\beta$ with the requirement that 
%\begin{equation}\label{requirement-fields}
%    H_{\alpha\beta} = -H_{\beta\alpha}
%    \end{equation}
\begin{remark}%(\textbf{The nature of the requirement \eqref{requirement-fields}})
    %The transition that interchanges a particle or a hole of type $\alpha$ at site $x$ with a particle or a hole of type $\beta$ at site $x+1$, is the same as the transition that interchanges a particle or a hole of type $\beta$ at site $x+1$ interchanges with a particle or a hole of type $\alpha$ at site $x$. 
    In this remark we explain the choice imposed by \eqref{requirement-fields}. 
    %Due to the dynamics defined by the generator \eqref{weak-asymmetric-genrator}, the following condition on the transition rates must hold for every $x\in \mathbb{T}_{N}$ and for every $\alpha,\beta\in \{0,1,2\}$:
    Exchanging occupancy of type $\alpha$ at site $x$ with  type $\beta$ at site $x+1$ is identical with exchanging occupancy of type $\beta$ at site $x+1$ with  type $\alpha$ at site $x$. Therefore we must impose the following condition on the rates
    \begin{equation}\label{requirement-H-onRates}
        c^{\bfH,\alpha\beta}_{(x,x+1)}(t)=c^{\bfH,\beta,\alpha}_{(x+1,x)}(t).
    \end{equation}
For the rates defined in   \eqref{WA-transitionRates}, \eqref{requirement-H-onRates} is satisfied if  $H_{\alpha\beta}=-H_{\beta\alpha}$.
%\begin{equation}
 %   \bm{\eta}\to \bm{\eta}-\delta_{\alpha}^{x}+\delta_{\beta}^{x}+\delta_{\alpha}^{x+1}-\delta_{\beta}^{x+1}
%\end{equation}
%\begin{equation}\label{jump-1}
%    c^{\bfH,\alpha\beta}_{(x,x+1)}(t)
%    =\exp\Big(\nabla_NH_{\alpha\beta}(\tfrac{x}{N},t)\Big)\eta_{\alpha}^{x}\eta_{\beta}^{x+1}
%\end{equation}
%and 
%\begin{equation}\label{jump-2}
%     c^{\bfH,\beta,\alpha}_{(x+1,x)}(t)
%     =\exp{\Big(-\nabla_NH_{\beta\alpha}(\tfrac{x}{N},t)\Big)}\eta_{\alpha}^{x}\eta_{\beta}^{x+1}\,.
%\end{equation}
%Therefore, the condition \eqref{requirement-H-onRates} is satisfied 
\end{remark}
We introduce some further notation. For all $T>0$, we consider the Skorokhod space {\color{black}$D\left([0,T],\Omega\right)$, which consists of the càdlàg trajectories taking values in $\Omega$.} On this space, we define the following path space measures:
\begin{itemize}
    \item $\mathbb{P}_{N}^{1/3}$: path space measure of the symmetric process with generator \eqref{symmetirc-genrator}, initialized with the distribution $\nu_{N}^{1/3}$.
    \item  $\mathbb{P}_{N}^{\bm{\gamma}}$: path space measure of the symmetric process with generator \eqref{symmetirc-genrator}, initialized with the distribution $\nu^{\bm{\gamma}(\cdot)}_{N}$.
    \item $\mathbb{P}_{N}^{\bm{\gamma},\bm{H}}$: path space measure of the weakly asymmetric process with generator \eqref{weak-asymmetric-genrator}, initialized with the distribution $\nu^{\bm{\gamma}(\cdot)}_{N}$.
\end{itemize}
For each species $\alpha\in \{1,2\}$ we introduce the corresponding empirical density field 
\begin{equation}\label{empirical-measure}
    \mu_{\alpha,N}(\bm{\eta}(N^{2}s)):=\frac{1}{N}\sum_{x=1}^{N} \eta_{\alpha}^{x}({\color{black}N^{2}s})\delta_{\frac{x}{N}}\,,
\end{equation}
\begin{remark}
    For the sake of notational simplicity, sometimes we will abbreviate $\mu_{\alpha,N}(\bm{\eta}(N^{2}s))$ by ${\mu}_{\alpha,N}(s)$. 
\end{remark}

This density field
 $\mu_{\alpha,N}(s)$ takes values in $D\left([0,T],\mathcal{M}_{1}\right)$, where $\mathcal{M}_{1}$ denotes the space of measures over $\Omega$ with total mass bounded by 1, i.e., $\sup_{||f||\leq 1} \langle \mu_{\alpha,N}(s),f\rangle \leq 1$. Additionally, we define the vector of density fields
\begin{equation}
    \bm{\mu}_{N}(s)=\begin{pmatrix}
        \mu_{1,N}(s)\\
        \mu_{2,N}(s)
    \end{pmatrix}
\end{equation}
taking values in the space $D\left([0,T],\mathcal{M}_{1}\times \mathcal{M}_{1}\right)$. We consider two functions $G_{1},G_{2}\in C^{2,1}(\mathbb{T}\times [0,T])$ and we list them in a vector denoted by 
\begin{equation}
    \bm{G}(u,s):= \begin{pmatrix}
        G_{1}(u,s)\\
        G_{2}(u,s)
    \end{pmatrix}\,.
\end{equation}
Then, we denote the pairing   
\begin{align}
    \langle\bm{\mu}_{N}(s),\bm{G}(\cdot,s)\rangle=\int_{\mathbb{T}}G_{1}(u,s) \mu_{1,N}(\dd u, s)+\int_{\mathbb{T}}G_{2}(u,s)\mu_{2,N}(\dd u,s)\,.
\end{align}
%We introduce the vector of functions $\bm{\gamma}=(\gamma_{0}(x),\gamma_{1}(x),\gamma_{2}(x))$. Each component
%\begin{equation}
 %   \gamma_{\alpha}(\cdot):\mathbb{T}\to [0,1]
%\end{equation}
%is assumed to be bounded. Moreover, we also require that this vector of functions lives on the simplex, i.e.
%\begin{align}
%    \gamma_{0}(\theta)+\gamma_{1}(\theta)+\gamma_{2}(\theta)=1\qquad \forall \theta\in \mathbb{T}
%\end{align}
%We denote by $\nu_{\bm{\gamma}}^{N}$ the product over sites of Multinoimal distribution on $\Omega$ with densities 
%\begin{equation}
%    \nu_{\bm{\gamma}}^{N}(\eta^{x})=\gamma_{0}\left(\frac{x}{N}\right)^{1-\eta_{1}^{x}-\eta_{2}^{x}}\gamma_{1}\left(\frac{x}{N}\right)^{\eta_{1}^{x}}\gamma_{2}\left(\frac{x}{N}\right)^{\eta_{2}^{x}}
%\end{equation}
%for all $x\in \mathbb{T}_{L}$.\\
\subsection{Superexponential estimate}\label{SEE section}
In this section we state the so called \textit{superexponential estimate}. This is a crucial tool initially introduced in \cite{guo1988nonlinear}, 
\cite{KOV}, which allows to replace macroscopic averages of local observables by an appropriate function of the local density. This is crucial both in the derivation of the hydrodynamic limit of the weakly asymmetric model as well as in the large deviations of the symmetric model. In the latter it becomes important that the replacement is superexponentially good, i.e., can still be performed e.g.\ in exponential martingales containing local averages.  This replacement is carried out within a space interval constructed around a microscopic point. Eventually, the size of this interval shrinks as the system size increases. 

We consider a function $\phi \in C(\Omega)$ and we define 
\begin{align}\label{phi-tilda}
    \widetilde{\phi}(\bm p ):=\mathbb{E}_{\nu_{N}^{\bm p}}[\phi]\,.
\end{align}
namely the expectation with respect to the product over sites of multinomial distribution $\nu_{N}^{\bm p}$ with constant parameters $\bm{p}=(p_{1},p_{2})$.\\
Next, we introduce a function that will play a key role in the superexponential estimate. This function relates to the behavior of occupation variables in a small neighborhood around a microscopic point and it reads
\begin{align}\label{definition-V}
    V_{N,\epsilon}(\bm{\eta})=\sum_{x=1}^{N}\left|\frac{1}{2\epsilon N+1}\sum_{|x-y|\leq N\epsilon}\tau_{y}\phi(\bm{\eta})-\widetilde{\phi}\left(\frac{1}{2N\epsilon+1}\sum_{|x-y|\leq \epsilon N}\eta_{1}^{y},\frac{1}{2N\epsilon+1}\sum_{|x-y|\leq \epsilon N}\eta_{2}^{y}\right)\right|\,.
\end{align}
The superexponential estimate is then the following result. 
\begin{theorem}\label{superexponential estimate}
For any $\delta>0$, for all $T>0$ and $\phi \in C(\Omega)$
\begin{align}\label{supo}
    \uplim_{\epsilon\to \infty}\;\uplim_{N\to\infty}\frac{1}{N}\log\;\mathbb{P}_{N}^{1/3}\left(\frac{1}{N}\int_{0}^{T}V_{N,\epsilon}(\bm{\eta}(s))\dd s\geq \delta\right)=-\infty.
\end{align}
\end{theorem}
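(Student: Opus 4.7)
The plan is to adapt the classical one-block / two-block strategy of \cite{KOV} and \cite{KipnisLandim} (Chapter 5) to the two-conserved-quantity setting of the multispecies stirring process. First I apply the exponential Chebyshev inequality at scale $aN$, with $a>0$ arbitrary; taking $\log$ and dividing by $N$ yields
$$\tfrac{1}{N}\log\mathbb{P}_N^{1/3}\!\left(\tfrac{1}{N}\!\int_0^T V_{N,\epsilon}(\bm{\eta}(s))\,ds\geq\delta\right)\leq -a\delta+\tfrac{T}{N}\Lambda_N(a,\epsilon),$$
where by the Feynman--Kac formula (accounting for the diffusive time acceleration $N^2\mathcal{L}$) and the Rayleigh--Ritz principle
$$\Lambda_N(a,\epsilon)=\sup_{\|f\|_{L^2(\nu_N^{1/3})}=1}\Bigl\{a\,\langle V_{N,\epsilon},f^2\rangle_{\nu_N^{1/3}}-N^2\,\mathcal{D}(f)\Bigr\},$$
and $\mathcal{D}(f)=\langle f,-\mathcal{L}f\rangle_{\nu_N^{1/3}}$ is the Dirichlet form of the symmetric stirring generator. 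It then suffices to show $\limsup_{\epsilon\to 0}\limsup_{N\to\infty}\tfrac{1}{N}\Lambda_N(a,\epsilon)=0$ for every $a>0$, since $a$ can afterwards be sent to $\infty$.

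Next I exploit the additive structure $V_{N,\epsilon}=\sum_{x=1}^N V_{N,\epsilon,x}$, with $V_{N,\epsilon,x}$ the local modulus term centred at $x$, together with $\mathcal{D}=\sum_x \mathcal{D}_{x,x+1}$. By translation invariance of $\nu_N^{1/3}$ and of $\mathcal{L}$, and by distributing the Dirichlet penalty among overlapping boxes $B_{x,\epsilon}=\{y:|y-x|\leq\epsilon N\}$, the variational problem for $\tfrac{1}{N}\Lambda_N(a,\epsilon)$ reduces, uniformly in $x$, to a single-box problem of the form
$$\sup_f\Bigl\{a\,\mathbb{E}_{\nu_N^{1/3}}\!\bigl[V_{N,\epsilon,0}\,f^2\bigr]-c\,\epsilon N^2\,\mathcal{D}_{B_{0,\epsilon}}(f)\Bigr\},$$
with $\mathcal{D}_{B_{0,\epsilon}}$ the Dirichlet form restricted to edges inside $B_{0,\epsilon}$ and $c>0$ a combinatorial constant. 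The heart of the argument is then a \emph{one-block estimate} in the multispecies setting, combining two ingredients: (i) a spectral gap of order $(\epsilon N)^{-2}$ for the multispecies stirring process restricted to $B_{0,\epsilon}$, uniform over the choice of per-species particle counts in the box; (ii) the equivalence of ensembles for the multinomial reference measure conditioned on those counts. Using (i), I would project $f^2$ onto the $\sigma$-algebra generated by the pair of counts $\bigl(\sum_y\eta_1^y,\sum_y\eta_2^y\bigr)$ over $B_{0,\epsilon}$, paying a cost absorbed by the Dirichlet penalty; using (ii), the resulting canonical expectation of the box average $\tfrac{1}{2\epsilon N+1}\sum_{y\in B_{0,\epsilon}}\tau_y\phi$ converges uniformly to $\widetilde{\phi}$ evaluated at the corresponding empirical species densities as $\epsilon N\to\infty$. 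Together these force the single-box supremum to vanish in the iterated limit.

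The main obstacle is precisely the multispecies versions of (i) and (ii). For (i), one can compare the restricted multispecies stirring Dirichlet form to that of a single-species SSEP on the same box by coupling arguments, since merging species $2$ with either species $1$ or the holes produces an SSEP whose spectral gap is of order $(\epsilon N)^{-2}$ by the Aldous--Diaconis--Shahshahani bound; uniformity over the per-species counts requires a modest but standard extension. For (ii), the classical single-parameter Stirling estimate must be upgraded to a two-dimensional local central limit theorem for the joint distribution of the two counts under the uniform trinomial measure, with uniform-in-density error bounds over the interior of the simplex $\{(p_1,p_2):p_1,p_2\geq 0,\,p_1+p_2\leq 1\}$. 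Once (i) and (ii) are in place, the remainder of the argument — including a two-block estimate that first localizes to fixed-size microscopic blocks of radius $\ell$ before taking $\ell\to\infty$ and then $\epsilon\to 0$ — proceeds exactly as in the single-species case, yielding the claimed superexponential bound.
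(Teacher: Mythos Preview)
Your overall architecture (exponential Chebyshev, Feynman--Kac, Rayleigh--Ritz, then one/two-block with equivalence of ensembles) matches the paper exactly. The gap is in your one-block step. After translation-averaging and localising to the box $B_{0,\epsilon}$, the correct Dirichlet penalty per term is of order $N/\epsilon$, not $c\,\epsilon N^{2}$: for a translation-invariant density $f$ one has $D(f)=\tfrac{1}{2\epsilon}\mathcal{D}_{B_{0,\epsilon}}(f)$, so $\tfrac{1}{N}\Lambda_N\le \sup_f\{a\langle V_{N,\epsilon,0},f^{2}\rangle-\tfrac{N}{2\epsilon}\mathcal{D}_{B_{0,\epsilon}}(f)\}$. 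Combining this with the spectral gap $\gtrsim(\epsilon N)^{-2}$ of the stirring process on $B_{0,\epsilon}$ gives an effective variance penalty of order $\tfrac{N}{\epsilon}\cdot(\epsilon N)^{-2}=\tfrac{1}{\epsilon^{3}N}\to 0$, which does \emph{not} force the density toward the canonical measure as $N\to\infty$. Hence a direct spectral-gap projection on the macroscopic $\epsilon N$-box cannot close the argument, regardless of the value of the gap.

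The paper (following \cite{KOV}) repairs this by inserting the microscopic scale $k$ \emph{before} splitting into one- and two-block pieces: one adds and subtracts $\widetilde{\phi}$ evaluated at the $k$-box empirical densities, and both Lemma~\ref{lemma-OBE} and Lemma~\ref{lemma-TBE} live on boxes of \emph{fixed} size $2k+1$. On such a box the marginal Dirichlet form satisfies $D^{*}_{2k+1}(f_N^{k})\le \tfrac{2k}{N}D(f_N)\le \tfrac{2kC}{N^{2}}\to 0$, and because the state space $\Omega_{2k+1}$ is finite one may use compactness of the Dirichlet-form level sets (no spectral gap needed) to reduce to densities with $D^{*}_{2k+1}=0$, i.e.\ convex combinations of the multinomial canonical measures $\mu_{2k+1,k_{1},k_{2}}$. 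Equivalence of ensembles then finishes the job, and here a two-dimensional local CLT is more than you need: the paper's Lemma~\ref{Lemma-EE} and Corollary~\ref{corollary-EqEns} obtain the required uniform convergence by a direct factorial computation and a subsequence argument, which also covers the boundary of the simplex (your ``interior of the simplex'' restriction would in fact be insufficient). Your idea to obtain the multispecies spectral gap by colour-merging is correct in itself, but in the compactness route it is simply not invoked.
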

\noindent Since the proof of this Theorem is rather long and involved, and it is not the main result of this paper, we postpone it to appendix \ref{proof of superexponential estimate}.

In the following corollary we show that the superexponential estimate also holds when we start from a local equilibrium distribution.
\begin{corollary}
Given a profile $\bm{\gamma}=(\gamma_{1},\gamma_{2})$, \eqref{supo} holds also for the path space measure $\mathbb{P}_{N}^{\bm\gamma}$.   
\end{corollary}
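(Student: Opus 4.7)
The plan is to deduce the estimate under $\mathbb{P}_N^{\bm\gamma}$ directly from the one under $\mathbb{P}_N^{1/3}$ by an absolute continuity (change of initial measure) argument, exploiting that the two processes share the same generator $\mathcal{L}$ and differ only through the initial distribution.

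First I would observe that, since $\nu_N^{\bm\gamma(\cdot)}$ and $\nu_N^{1/3}$ are both product measures on $\Omega$, the Radon--Nikodym derivative factorises over sites:
\begin{equation*}
\frac{\dd\nu_N^{\bm\gamma(\cdot)}}{\dd\nu_N^{1/3}}(\bm\eta)
=\prod_{x=1}^{N}\frac{\nu_{N,x}^{\bm\gamma(\cdot)}(\eta^x)}{\nu_{N,x}^{1/3}(\eta^x)}
=\prod_{x=1}^{N}3\bigl[\gamma_1(\tfrac{x}{N})\eta_1^x+\gamma_2(\tfrac{x}{N})\eta_2^x+(1-\gamma_1(\tfrac{x}{N})-\gamma_2(\tfrac{x}{N}))\eta_0^x\bigr].
\end{equation*}
Each factor is bounded above by $3$ (the densities $\gamma_1,\gamma_2,1-\gamma_1-\gamma_2$ lie in $[0,1]$), so the total density satisfies the deterministic bound $\dd\nu_N^{\bm\gamma(\cdot)}/\dd\nu_N^{1/3}\leq 3^N$ uniformly in $\bm\eta\in\Omega$.

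Next, for any measurable event $A\subset D([0,T],\Omega)$, conditioning on the initial configuration and using that the law of the trajectory given $\bm\eta(0)$ is identical under both measures yields
\begin{equation*}
\mathbb{P}_N^{\bm\gamma}(A)
=\int_\Omega \frac{\dd\nu_N^{\bm\gamma(\cdot)}}{\dd\nu_N^{1/3}}(\bm\eta)\,\mathbb{P}_{\bm\eta}(A)\,\dd\nu_N^{1/3}(\bm\eta)
\leq 3^N\,\mathbb{P}_N^{1/3}(A).
\end{equation*}
Applying this to the event $A_{N,\epsilon,\delta}:=\bigl\{\tfrac{1}{N}\int_0^T V_{N,\epsilon}(\bm\eta(s))\,\dd s\geq\delta\bigr\}$, taking logarithms and dividing by $N$ gives
\begin{equation*}
\frac{1}{N}\log\mathbb{P}_N^{\bm\gamma}(A_{N,\epsilon,\delta})\leq \log 3+\frac{1}{N}\log\mathbb{P}_N^{1/3}(A_{N,\epsilon,\delta}).
\end{equation*}
Taking $\uplim_{N\to\infty}$ and then $\uplim_{\epsilon\to 0}$ and invoking Theorem \ref{superexponential estimate}, the right-hand side diverges to $-\infty$, hence so does the left-hand side, which is exactly the claim.

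There is no real obstacle: the entire argument rests on the pointwise bound $\dd\nu_N^{\bm\gamma(\cdot)}/\dd\nu_N^{1/3}\leq 3^N$, which is of order $e^{O(N)}$ and therefore negligible compared with the superexponential decay $e^{-\Omega(N)\cdot\infty}$ supplied by Theorem \ref{superexponential estimate}. The only point worth mentioning is that this robustness to the initial law is precisely why the uniform reference measure $\nu_N^{1/3}$ was convenient in the statement of the theorem.
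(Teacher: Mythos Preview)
Your proof is correct and follows essentially the same approach as the paper: bound the Radon--Nikodym derivative $\dd\nu_N^{\bm\gamma(\cdot)}/\dd\nu_N^{1/3}$ pointwise by $3^N$, transfer this to the path-space measures via the common dynamics, and note that the resulting additive constant $\log 3$ is harmless against the superexponential decay from Theorem~\ref{superexponential estimate}. You provide slightly more detail (the explicit factorisation of the density), but the argument is the same.
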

\textbf{Proof}:
the proof follows from Theorem \ref{superexponential estimate} and from the following upper bound for all sets $A\subset \Omega_{N}$
\begin{align}
    \mathbb{P}_{N}^{\bm\gamma}(A)=\sum_{\bm{\eta}\in \Omega_{N}}\frac{d \nu_{\bm\gamma}^{N}}{d \nu_{N}^{1/3}}(\bm{\eta})\mathbb{P}_{N}^{\bm{\eta}}(A)\nu_{N}^{1/3}(\bm{\eta})\leq 3^{N}\mathbb{P}_{N}^{1/3}(A).
\end{align}
\begin{flushright}
    $\square$
\end{flushright}

\section{Hydrodynamic limit of the weakly asymmetric model}\label{section 3}
In this section we state the hydrodynamic limit of the weakly asymmetric version of the multispecies stirring model with generator \eqref{weak-asymmetric-genrator}. 
\begin{remark}\label{notation-convention-rho}
    Sometimes, in this section and in the following one, in order to alleviate the notation, we do not explicitly write the space and time dependence of the densities. Namely, when this dependence is understood we only write $\bm{\rho},\rho_{1},\rho_{2}$ in place of $\bm{\rho}(u,t),\rho_{1}(u,t),\rho_{2}(u,t)$.  The same convention is used for the potentials $H_{\alpha,\beta}(u,t)$. 
\end{remark}
\begin{theorem}\label{hydro wasep}
   As $N$ tends to infinity, the density fields for the species $\alpha=1,2$ converge in probability $\mathbb{P}_{N}^{\gamma,H}$ to the unique weak solution $(\rho_{1}(t,u),\rho_{2}(t,u))$ of the following system of hydrodynamic equations
\begin{align}\label{HD-equations}
    \partial_{t}\rho_{1}&=\Delta \rho_{1}-2\nabla\left(\rho_{1}(1-\rho_{1}-\rho_2)\nabla H_{10}\right)-\nabla\left(2\rho_{1}\rho_{2}\nabla H_{12}\right),\nonumber\\
    \partial_{t}\rho_{2}&=\Delta \rho_{2}-2\nabla\left(\rho_{2}(1-
    \rho_1 - \rho_{2})\nabla H_{20}\right)+\nabla\left(2\rho_{1}\rho_{2}\nabla H_{12}\right),
\end{align}
with initial conditions 
\begin{align}
    \rho_{1}(0,u)=\gamma_{1}(u),\qquad    \rho_{2}(0,u)=\gamma_{2}(u).
\end{align}
\end{theorem}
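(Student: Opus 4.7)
The plan is to follow the standard Guo--Papanicolaou--Varadhan entropy/martingale scheme, closed via the superexponential estimate of Theorem \ref{superexponential estimate}. For any smooth test vector $\bm{G}=(G_1,G_2)\in C^{2,1}(\mathbb{T}\times[0,T])^2$ and each $\alpha\in\{1,2\}$, introduce the Dynkin martingale
\begin{equation*}
M^{N,\alpha}_t = \langle \mu_{\alpha,N}(t),G_\alpha(\cdot,t)\rangle - \langle \mu_{\alpha,N}(0),G_\alpha(\cdot,0)\rangle - \int_0^t \bigl(\partial_s + N^2\mathcal{L}^{\bm{H}}(s)\bigr)\langle \mu_{\alpha,N}(s),G_\alpha(\cdot,s)\rangle\, ds.
\end{equation*}
A direct computation of $\mathcal{L}^{\bm{H}}_{x,x+1}\eta_\alpha^x$, using the antisymmetry $H_{\alpha\beta}=-H_{\beta\alpha}$ and the expansion $\exp(\nabla_N H_{\alpha\beta})=1+\nabla_N H_{\alpha\beta}+O(N^{-2})$, splits the action of $N^2\mathcal{L}^{\bm{H}}$ on the empirical averages into a symmetric piece that, after discrete summation by parts against $G_\alpha$, produces the discrete Laplacian $\Delta_N G_\alpha$ (giving $\Delta\rho_\alpha$ in the limit), and an asymmetric piece whose leading contribution is the discrete divergence of a current built from two-site products $\eta_\alpha^x\eta_\beta^{x+1}$ weighted by $\nabla H_{\alpha\beta}(x/N,s)$. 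This is the microscopic analogue of \eqref{HD-equations}.

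Next I verify that the martingale term is asymptotically negligible: the predictable quadratic variation of $M^{N,\alpha}_t$ is $O(N^{-1})$, since each nearest-neighbor swap alters $\langle \mu_{\alpha,N},G_\alpha\rangle$ by $O(N^{-1})\|\nabla G_\alpha\|_\infty$ and there are $N$ bonds each firing at rate $O(1)$, so Doob's inequality yields $\sup_{t\leq T}|M^{N,\alpha}_t|\to 0$ in $L^2$. To close the drift in terms of the empirical density I must replace the cross products $\eta_\alpha^x\eta_\beta^{x+1}$ by a function of local averages. This is exactly where the superexponential estimate enters: applied with $\phi(\bm{\eta})=\eta_\alpha^0\eta_\beta^1$ (and $\widetilde{\phi}(\bm{p})=p_\alpha p_\beta$, with $p_0=1-p_1-p_2$), it provides a superexponentially small replacement error under $\mathbb{P}_N^{1/3}$; by the entropy inequality and the fact that the Radon--Nikodym derivative $d\mathbb{P}_N^{\bm{\gamma},\bm{H}}/d\mathbb{P}_N^{1/3}$ has at most exponential ($e^{CN}$) growth, the replacement transfers to $\mathbb{P}_N^{\bm{\gamma},\bm{H}}$. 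Passing $N\to\infty$ then $\epsilon\to 0$ converts the drift into $-2\nabla(\rho_\alpha(1-\rho_1-\rho_2)\nabla H_{\alpha 0}) \pm \nabla(2\rho_1\rho_2\nabla H_{12})$ in a weak sense.

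With the closure in hand, I establish tightness of $(\bm{\mu}_N)_N$ in $D([0,T],\mathcal{M}_1\times\mathcal{M}_1)$ via the Aldous--Rebolledo criterion: the integrated drift is uniformly Lipschitz in time thanks to the bound $\eta_\alpha^x\in\{0,1\}$ and the smoothness of $\bm{G}$ and $\bm{H}$, while the martingale is controlled by Step~2. Any subsequential limit $\bm{\rho}=(\rho_1,\rho_2)$ takes values in the simplex $\{(p_1,p_2):p_1,p_2\geq 0,\; p_1+p_2\leq 1\}$, and passing to the limit in the Dynkin identity identifies it as a weak solution of \eqref{HD-equations} with initial datum $\bm{\gamma}$; the initial condition is inherited from the law of large numbers for the product multinomial $\nu_N^{\bm{\gamma}(\cdot)}$.

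The final step is uniqueness of weak solutions. Since the nonlinearities $\rho_\alpha(1-\rho_1-\rho_2)$ and $\rho_1\rho_2$ are $C^1$ on the invariant simplex and $\bm{H}\in C^{2,1}$, testing the PDE for the difference $\bm{w}=\bm{\rho}-\bm{\rho}'$ of two solutions against $\bm{w}$ itself and integrating by parts yields
\begin{equation*}
\tfrac{d}{dt}\bigl(\|w_1\|_{L^2}^2+\|w_2\|_{L^2}^2\bigr) + 2\bigl(\|\nabla w_1\|_{L^2}^2+\|\nabla w_2\|_{L^2}^2\bigr) \leq C(\bm{H})\bigl(\|w_1\|_{L^2}^2+\|w_2\|_{L^2}^2\bigr),
\end{equation*}
after absorbing the cross term $\int(\cdots)\nabla H\cdot\nabla w_\alpha$ via Young's inequality; Gronwall then forces $\bm{w}\equiv 0$. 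Uniqueness promotes subsequential convergence to full convergence in probability. The step I expect to be most delicate is the replacement of $\eta_\alpha^x\eta_\beta^{x+1}$ under the weakly asymmetric law: the observable is a genuine two-site function and the superexponential estimate is stated under the symmetric reference measure, so a careful invocation of the entropy inequality (or, alternatively, a direct adaptation of the one- and two-block lemmas to the perturbed dynamics) is the main technical point.
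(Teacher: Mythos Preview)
Your proposal is correct and follows essentially the same route as the paper's proof in Appendix~B: Dynkin martingale, expansion of the rates $e^{\nabla_N H_{\alpha\beta}}$ to isolate the discrete Laplacian and the drift built from two-site products, vanishing of the martingale via the Carr\'e-du-Champ, replacement of $\eta_\alpha^x\eta_\beta^{x+1}$ by block averages via the superexponential estimate transferred to $\mathbb{P}_N^{\bm\gamma,\bm H}$ through the Radon--Nikodym bound~\eqref{estimate-for-expMartingale}, and tightness by standard arguments. The only addition is your $L^2$--Gronwall uniqueness sketch, which the paper states but does not spell out; your concern about the replacement under the perturbed law is handled in the paper exactly as you anticipate, via the exponential bound on $d\mathbb{P}_N^{\bm\gamma,\bm H}/d\mathbb{P}_N^{\bm\gamma}$.
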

\begin{proof}
    For the proof we refer to Section \ref{Appendix B}. 
\end{proof}
In particular in the case where every $H_{\alpha\beta} = 0$, we recover the uncoupled heat equations
\begin{align}\label{HD-equations uncoupled}
    \partial_{t}\rho_{1}&=\Delta \rho_{1},\\
    \partial_{t}\rho_{2}&=\Delta \rho_{2}.
\end{align}
which in matrix form reads 
\begin{equation}
    \partial_t \left(\begin{matrix} 
    \rho_1\\\rho_2
    \end{matrix}\right) = D(\rho_1,\rho_2) 
    \left(\begin{matrix}
        \Delta \rho_1\\ \Delta \rho_2
    \end{matrix}\right),
\end{equation}
where 
\begin{equation}\label{diffusivity-matrix}
 D(\rho_{1},\rho_{2})=   \begin{pmatrix}
        1&0\\
        0&1
    \end{pmatrix}\,
\end{equation}
is the diffusion matrix.

\begin{comment}
\begin{remark}
we can write \eqref{HD-equations} as
\begin{align}\label{HD-equations-better}
    \partial_{t}\rho_{1}=&\Delta\rho_{1}-2\nabla \left(\rho_{1}(1-\rho_{1}-\rho_{2})\nabla H_{01}\right)+2\nabla \left(\rho_{1}\rho_{2}\left(\nabla H_{12}\right)\right)\\
    \partial_{t}\rho_{2}=&\Delta \rho_{2}-2\nabla \left(\rho_{2}(1-\rho_{1}-\rho_{2})\nabla H_{02}\right)-2\nabla \left(\rho_{1}\rho_{2}\left(\nabla H_{12}\right)\right)
\end{align}
    By now choosing the fields $H_{01}=H_{02}=H$ and $H_{12}=0$, i.e., we don't distinguish particles of the types 1 and 2, we obtain the following hydrodynamic equations
\begin{align}
    \partial_{t}\rho_{1}=&\Delta\rho_{1}-2\nabla \left(\rho_{1}(1-\rho_{1}-\rho_{2})\nabla H\right)\\
    \partial_{t}\rho_{2}=&\Delta \rho_{2}-2\nabla \left(\rho_{2}(1-\rho_{1}-\rho_{2})\nabla H\right)
\end{align}
If we would then define $\varrho=\varrho_1 + \varrho_2$, then this would satisfy the following equation
\begin{equation}
\partial_t \varrho = \Delta \varrho - 2\nabla(\varrho(1-\varrho)\nabla H),
\end{equation}
i.e., we recover the hydrodynamic limit of the weakly asymmetric model of the single-species SEP. 
\end{remark}
\end{comment}
\subsection{Potentials for large deviations}\label{choice of fields}
    In order to prove the large deviations for the trajectory of the empirical densities, we need appropriate perturbations of the dynamics which make these deviating trajectories typical.
    As will become clear in section 4, these perturbations correspond to the weakly asymmetric stirring process, with 
    {\color{black}potentials} which we denote by 
    \begin{equation}\label{condition-field-1}
        H_{1}(u,t):=H_{10}(u,t),\qquad H_{2}(u,t):=H_{20}(u,t)\qquad \forall u\in \mathbb{T}\quad \text{and}\quad t\in [0,T]\,,
    \end{equation}
    and where moreover, the {\color{black}potential} $H_{12}$ must satisfy
     \begin{align}\label{condition-Field-2}
         H_{12}(u,t)=H_{1}(u,t)-H_{2}(u,t)\qquad \forall u\in \mathbb{T}\quad \text{and}\quad t\in [0,T].
     \end{align}
     Therefore, the resulting hydrodynamic equations read
    \begin{align}\label{HD-equations LDP}
    \partial_{t}\rho_{1}=&\Delta\rho_{1}-2\nabla \left(\rho_{1}(1-\rho_{1})\nabla H_1\right)+2\nabla \left(\rho_{1}\rho_{2}\nabla H_2\right),\nonumber \\
    \partial_{t}\rho_{2}=&\Delta\rho_{2}-2\nabla \left(\rho_{2}(1-\rho_{2})\nabla H_2\right)+2\nabla \left(\rho_{1}\rho_{2}\nabla H_1\right)\,.
    \end{align}
The intuitive interpretation of this choice of {\color{black}potentials} is the following. Particles of type 1 and 2 are driven across the holes (particles of type $0$) by the {\color{black}force depending on the potentials} $H_1$ and $H_2$ {\color{black}(namely the external fields are given by the gradient of the potentials)} respectively. When two particles of type 1 and 2 are adjacent, a competition between the fields generated by the potentials $H_{1}$ and $H_{2}$ sets in. As a result, the net field acting on each species is given by $\pm \nabla(H_{1}-H_{2})$ respectively. Moreover, as we will point out later, this choice of the fields allows the system to satisfy the Einstein relation connecting diffusion, mobility and compressibility matrices. 

\subsection{Currents and the Einstein relation}
\paragraph{Macroscopic currents.}The hydrodynamic equations \eqref{HD-equations LDP} can be interpreted as conservation laws. To illustrate this, we compute the macroscopic currents for each species. These currents represent the net flux crossing an infinitesimal volume surrounding a point $u \in \mathbb{T}$ at any time $t \in [0, T]$. We identify two types of currents:
\begin{enumerate}
    \item \textit{Fick's currents}: These currents are proportional to minus the density gradients via the diffusion matrix as given in \eqref{diffusivity-matrix}.
The currents are expressed as
\begin{align}
        \begin{pmatrix}
        J_{1}^{F}\\
        J_{2}^{F}
    \end{pmatrix}=-D(\rho_{1},\rho_{2})\begin{pmatrix}
        \nabla\rho_{1}\\
        \nabla\rho_{2}
    \end{pmatrix}\,.
\end{align}
Generally, the diffusivity matrix \eqref{diffusivity-matrix} may depend on the densities, but in this case, it simplifies to the identity matrix.
\item \textit{Drift currents}: these currents are defined as the product of (twice)\footnote{The factor $2$ in front is due to the fact that in the generator \eqref{symmetirc-genrator} both jumps, to the left and to the right, have rate $1$, instead of $1/2$.} the mobility matrix 
\begin{equation}\label{mobility-matrix}
    \chi(\rho_{1},\rho_{2})=\begin{pmatrix}
        \rho_{1}(1-\rho_{1})&-\rho_{1}\rho_{2}\\
        -\rho_{1}\rho_{2}&\rho_{2}(1-\rho_{2})
    \end{pmatrix}
\end{equation}
and the external field, which is the gradient of the {\color{black}potential} $(H_{1}, H_{2})$. Specifically, these currents are given by
\begin{equation}
    \begin{pmatrix}
        J_{1}^{D}\\
        J_{2}^{D}
    \end{pmatrix}=2\chi(\rho_{1},\rho_{2})\begin{pmatrix}
        \nabla H_{1}\\
        \nabla H_{2}
    \end{pmatrix}\,.
\end{equation}
It is important to note that the mobility matrix \eqref{mobility-matrix} is symmetric and corresponds to the covariance matrix of the multinomial distribution with parameters $\rho_1, \rho_2$. This matrix also appears in the study of fluctuations as proved in \cite{Frank-Casini-Cristian-Fluct}. 
\end{enumerate}

%the difference of concentration (Fick's currents) and the external (electric) field (Drift currents). The first ones read

%while the second ones read
%\begin{equation}
%    \begin{pmatrix}
%        J_{1}^{O}\\
%        J_{2}^{O}
%    \end{pmatrix}=2\begin{pmatrix}
%        \rho_{1}(1-\rho_{1})&-\rho_{1}\rho_{2}\\
%        -\rho_{1}\rho_{2}&\rho_{2}(1-\rho_{2})
%    \end{pmatrix}\begin{pmatrix}
%        \nabla H_{01}\\
%        \nabla H_{02}
%    \end{pmatrix}
%\end{equation}
%We further observe that, while the diffusivity matrix is diagonal (no coupling effect between the two species), the mobility matrix is non-diagonal, making the two species interact via the external fields. 
We now compute the total currents, which are given by the sum of Fick's and of the Drift currents for each species. Namely they read 
\begin{equation}\label{totalCurrent}
     \begin{pmatrix}
        J_{1}\\
        J_{2}
    \end{pmatrix}= \begin{pmatrix}
        J_{1}^{F}\\
        J_{2}^{F}
    \end{pmatrix}+ \begin{pmatrix}
        J_{1}^{D}\\
        J_{2}^{D}
    \end{pmatrix}\,.
\end{equation}
Therefore, equation \eqref{HD-equations LDP} can be obtained by substituting the total currents \eqref{totalCurrent} in the continuity equations of the densities, i.e. 
\begin{align}
    \partial_{t}\rho^{1}&=-\nabla J_{1},\nonumber\\
        \partial_{t}\rho^{2}&=-\nabla J_{2} \,.
\end{align}
\paragraph{Einstein's relation.} 
\begin{comment}In this paragraph, we demonstrate that, under the further assumption that the {\color{blue}potentials} $H_{\alpha,\beta}(u,t)$ are linear in the space coordinate $u\in \mathbb{T}$, the system of hydrodynamic equations \eqref{HD-equations LDP} satisfies Einstein's relation. Indeed, in the case of weakly asymmetric process reversibility is lost; however, under the assumption of linearity in the space variable of $H_{\alpha,\beta}(u,t)$, the measure $\nu^{\bm{\gamma}(\cdot)}_{N}$, defined in  \eqref{reversible-MS-symm}, remains stationary for the generator \eqref{weak-asymmetric-genrator}. To prove stationarity we need to demonstrate that, for all $f:\Omega\to \mathbb{R}$, the following condition holds:
\begin{align}\label{stationarity-condition}
    \int\left(\mathcal{L}^{\bm{H}}f\right)(\bm{\eta})\nu_{N}^{\bm{\gamma}}[d\bm{\eta}]=0 \,.
\end{align}
Given that for all $x\in \mathbb{T}_{N}$ and for all $a\in \{0,1,2\}$, the occupation variable $\eta_{a}^{x}\in\{0,1\}$ takes values in $\{0,1\}$, every function $f(\eta)$ can be written as a sum of products of $\eta_{a}^{x}$. Consequently, it suffices to prove that \eqref{stationarity-condition} holds for the choice $f(\eta)=\eta_{a}^{x}$. Exploiting the linearity of the {\color{blue}potentials}, by direct computations one can show that 
\begin{align}
    &\int \left(\mathcal{L}^{\bm{H}}\eta_{a}^{x}\right)\nu_{N}^{\bm{\gamma}}[d\bm{\eta}]=0\,.
\end{align}
\end{comment}
 We introduce the free energy functional, that is defined as the large deviation functional of a multinomial random variable with number of trials equal to $1$ and probabilities all equal to $1/3$. Namely, we have that 
\begin{equation}
    F(\rho_{1},\rho_{2})=\rho_{1}\log(\rho_{1})+\rho_{2}\log(\rho_{2})+(1-\rho_{1}-\rho_{2})\log(1-\rho_{1}-\rho_{2})+\log(3)\,.
\end{equation}
We compute the Hessian matrix of $ F(\rho_{1},\rho_{2})$, sometimes called the inverse of the compressibility matrix, obtaining
\begin{equation}\label{Hessian}
   F^{''}(\rho_{1},\rho_{2})= \begin{pmatrix}
        \frac{1}{\rho_{1}}+\frac{1}{1-\rho_{1}-\rho_{2}}&\frac{1}{1-\rho_{1}-\rho_{2}}\\
        \frac{1}{1-\rho_{1}-\rho_{2}}&\frac{1}{\rho_{2}}+\frac{1}{1-\rho_{1}-\rho_{2}}
    \end{pmatrix}\,.
\end{equation}
Then we see that by combining \eqref{diffusivity-matrix}, \eqref{mobility-matrix} and \eqref{Hessian}, the following relation holds. 
\begin{equation}\label{Einstein-relation}
    D(\rho_{1},\rho_{2})=F^{''}(\rho_{1},\rho_{2})\chi(\rho_{1},\rho_{2})\,.
\end{equation}
This equality is called the Einstein relation (see \cite{jonaLansinio,spohn2012large} for details).
Notice that we used the specific form of the potentials described in \eqref{condition-field-1} and \eqref{condition-Field-2} to obtain the Einstein relation  \eqref{Einstein-relation}, which provides another physical motivation for these conditions.
\begin{remark}\label{recover remark}
We can recover the hydrodynamic limit of the single species weakly asymmetric exclusion process from equation \eqref{HD-equations LDP} as given in \cite[Theorem 3.1]{KOV}. Namely, if we choose the same {\color{black}potential} $H_1=H_2=H$, then we obtain the following.
\begin{align}
    \partial_{t}\rho_{1}=&\Delta\rho_{1}-2\nabla \left(\rho_{1}(1-\rho_{1}-\rho_{2})\nabla H\right),\nonumber\\
    \partial_{t}\rho_{2}=&\Delta \rho_{2}-2\nabla \left(\rho_{2}(1-\rho_{1}-\rho_{2})\nabla H\right).
\end{align}
By now defining $\varrho := \rho_1+\rho_2$, i.e., $\varrho$ does not distinguish between particles of type 1 and type 2, then $\varrho$ satisfies 
\begin{equation}
\partial_t \varrho = \Delta \varrho - 2\nabla(\varrho(1-\varrho)\nabla H).
\end{equation}
This result is to be expected, since the process defined as $\eta := \eta_1 + \eta_2$ is a standard (weakly asymmetric) exclusion process.
\end{remark}
\begin{remark}\label{generalization remark HD}
    At the cost of more notational complexity, but no additional mathematical difficulties, one can generalize the hydrodynamic limit of Theorem \ref{hydro wasep} to any number of species, i.e., $\alpha \in \{0,1,...,n\}$ for any $n\in\mathbb{N}$.

The hydrodynamic limit of the weakly asymmetric model with the general {\color{black}potentials} $H_{\alpha \beta} = -H_{\beta \alpha}$  is now given by a system of $n$ dependent partial differential equations 
\begin{equation}
\partial_t \rho_\alpha = \Delta \rho_\alpha -2\sum_{\beta \neq \alpha} \nabla \left(\rho_\alpha \rho_\beta \nabla H_{\alpha\beta} \right),
\end{equation}
with the convention that $\rho_0 = 1-\sum_{\alpha=1}^n \rho_\alpha$. 
For the large deviations of the trajectories of the densities  we only need $n$ {\color{black}potentials}. The choice of potentials, which is the analogue of the conditions \eqref{condition-field-1} and \eqref{condition-Field-2}, then reads 
\begin{equation}
H_{\alpha} := H_{0\alpha}, \ \ \ \ \ \ H_{\alpha \beta} := H_\alpha - H_\beta. 
\end{equation}
This choice of potentials then results in the following hydrodynamic limit 
\begin{equation}
\partial_t \rho_\alpha = \Delta \rho_\alpha -2\nabla \left(\rho_\alpha (1-\rho_\alpha) \nabla H_\alpha\right) - 2 \sum_{\beta \neq \alpha} \nabla \left(\rho_\alpha \rho_\beta \nabla H_\beta   \right).
\end{equation}
\end{remark}

\section{Large deviations}\label{LDP section}
In this section we aim to prove the large deviation principle of the multispecies stirring process. We start by defining the rate function $\calI_{\bm\gamma} : D([0,T], \mathcal{M}_1\times \mathcal{M}_1) \to [0,\infty]$ which consists of two parts 
\begin{equation}
\calI_{\bm\gamma}(\bfrho) = h(\bfrho(0);\bm\gamma) + \calI_0(\bfrho),
\end{equation}
{\color{black}where $\bm{\rho}(0)$ denotes the trajectory $\bm{\rho}$ evaluated at the initial time $t=0$.} Here $h(\bfrho(0);\bm\gamma)$ is the static part of the large deviation functional, i.e. the one due to the initial product measure $\nu_N^{\bm\gamma}$ as defined in \eqref{reversible-measure-withProfile}. It is given by the formula
\begin{align}\label{static}
h(\bfrho(0);\bm\gamma) = \sup_{\bm\phi}h_{\bm\phi}(\bfrho(0);\bm\gamma), \qquad \qquad h_{\bm\phi}(\bfrho(0);\bm\gamma)  =  \sum_{\alpha=0}^2 \langle \rho_{\alpha}(0),\phi_\alpha\rangle - \int_{\mathbb{T}}\log\left(\sum_{\alpha=0}^2 \gamma_\alpha(u) e^{\phi_\alpha(u)}\right)\dd u,
\end{align}
where the supremum is taken over all continuous $\bm\phi = (\phi_0,\phi_1,\phi_2)$ and we use  that $\rho_0 := 1-\rho_1-\rho_2$.

$\calI_0(\bfrho)$ is the dynamic part of the large deviation functional, i.e., the one due to the dynamics of the trajectory $\bfrho$ over time. It has the following form,
\begin{align}\label{I-zero}
    \mathcal{I}_0(\bfrho) = \sup_{\bfG} \left\{\ell(\bfrho;\bfG) - \tfrac{1}{2}||\bfG||_{\mathcal{H}(\bfrho)}^2\right\}.
\end{align}
Here the supremum is taken over vectors of functions $\bfG = \binom{G_1}{G_2}$ where both $G_1,G_2 \in C^{2,1}(\mathbb{T}\times [0,T])$. The operator $\ell$ is the linear operator corresponding to the hydrodynamic limit of the multispecies SEP, i.e., it is given by 
\begin{equation}
\ell(\bfrho;\bfG) = \langle \bfrho(T),\bfG(\cdot, T)\rangle - \langle \bfrho(0),\bfG(\cdot, 0)\rangle - \int_0^T \langle \bfrho(t), (\partial_t + \Delta)\bfG(\cdot, t)\rangle\dd t,
\end{equation}
which is equal to zero for all $\bfG$ iff $\bfrho$ solves the PDE $\dot{\bfrho}(t) = \Delta \bfrho(t)$ in the sense of distributions. Lastly, the norm in the definition of the rate function \eqref{I-zero} is the norm of the Hilbert space $\mathcal{H}(\bfrho)$ equipped with the following inner product 
\begin{align}\label{scalar-prduct-H}
\langle \bfG, \bfH\rangle_{\mathcal{H}(\bfrho)}  \nonumber
&= 2\int_0^T \left<\rho_1(t)(1-\rho_1(t)), \nabla G_1(\cdot,t) \nabla H_1(\cdot,t)\right> \dd t\\
&\ \ \ \ +2\int_0^T \left<\rho_2(t)(1-\rho_2(t)), \nabla G_2(\cdot,t) \nabla H_2(\cdot,t)\right> \dd t\nonumber\\
&\ \ \ \ -2\int_0^T \left<\rho_1(t)\rho_2(t), \nabla G_1(\cdot,t) \nabla H_2(\cdot,t) + \nabla G_2(\cdot,t) \nabla H_1(\cdot,t)\right> \dd t.
\end{align}
\begin{remark}
    In Lemmas \ref{explicit static} and \ref{lemma51} we give more explicit forms of the functionals $h(\cdot;\bm\gamma)$ and  $\calI_0$ respectively. Namely, $h(\bfrho(0);\bm\gamma)$ can be written as the limit of relative entropies of multinomials with respective densities $\bfrho(0)$ and $\bm\gamma$, and $\calI_0(\bfrho) = \frac{1}{2}||\bfH||_{\mathcal{H}(\bfrho)}^2$ where $\bfH\in\mathcal{H}(\bfrho)$ is the unique function such that $\bfrho$ satisfies \eqref{HD-equations LDP} in the weak sense.  
\end{remark}

In order for a large deviation principle to hold, we need to show that we have the following two inequalities:
\begin{itemize}
    \item \textbf{Upper bound}: For every closed $\mathcal{C} \subset D([0,T];\mathcal{M}_1\times \mathcal{M}_1)$ we have that 
    \begin{equation}
    \uplim_{N\to\infty} \frac{1}{N} \log \p^{\bm\gamma}_N\left( \bm\mu_N \in \mathcal{C}\right) \leq -\inf_{\bm\rho \in \mathcal{C}} \calI_{\bm\gamma}(\bm\rho)
    \end{equation}
    \item \textbf{Lower bound}: For every open $\mathcal{O}\subset D([0,T];\mathcal{M}_1\times \mathcal{M}_1)$ we have that 
    \begin{equation}
    \lowlim_{N\to\infty} \frac{1}{N} \log \p^{\bm\gamma}_N\left( \bm\mu_N \in \mathcal{O}\right) \geq -\inf_{\bm\rho \in \mathcal{O}} \calI_{\bm\gamma}(\bm\rho)
    \end{equation}
\end{itemize}
We give a proof for the upper and lower bound in sections \ref{upper bound} and \ref{lower bound} respectively. 
First  we calculate the Radon-Nikodym derivative $\frac{\dd\p^{\rho,H}_N}{\dd\p^{\bm\gamma}_N}$ of the path space measures of the weakly asymmetric process relative to the original process in section \ref{radon-Nikodym section}. Additionally, we establish   exponential tightness in section  \ref{exponential tightness section} which allows for the substitution of closed sets with compact sets in the derivation of the upper bound.
\begin{remark}
    Often, in the following to alleviate notation we do not write explicitly the time dependence of the occupation variables. Namely, we write  $\eta_{\alpha}^{x}$ in place of $\eta_{\alpha}^{x}(N^{2}s)$ when the time dependence is understood.
\end{remark}

\begin{comment}
\begin{align*}
\frac{d\nu_{\bm\gamma}^N}{d\nu_{\bfrho}^N}(\eta) 
&= \prod_{x=1}^N \left(\frac{\gamma_0(\tfrac{x}{N})}{\rho_0(\tfrac{x}{N})}\right)^{1-\eta_1^x-\eta_2^x}\cdot \left(\frac{\gamma_1(\tfrac{x}{N})}{\rho_1(\tfrac{x}{N})}\right)^{\eta_1^x}\cdot \left(\frac{\gamma_2(\tfrac{x}{N})}{\rho_2(\tfrac{x}{N})}\right)^{\eta_2^x}\\
&=\exp\left( \sum_{x=1}^N (1-\eta_1^x-\eta_2^x)\log\left( \frac{\gamma_0(\tfrac{x}{N})}{\rho_0(\tfrac{x}{N})}\right) + \eta_1^x \log\left( \frac{\gamma_1(\tfrac{x}{N})}{\rho_1(\tfrac{x}{N})}\right) + \eta_2^x \log\left( \frac{\gamma_2(\tfrac{x}{N})}{\rho_2(\tfrac{x}{N})}\right)\right).
\end{align*}
\begin{align}
    &h(\rho_{1}(0),\rho_{2}(0);\bm{\gamma})\nonumber
    \\=&
    \sup_{\phi_{0},\phi_{1},\phi_{2}\in C(\mathbb{T})}\left\{\langle (1-\rho_{1}(0)-\rho_{2}(0)),\phi_{0}\rangle+\langle \rho_{1}(0),\phi_{1}\rangle+\langle \rho_{2}(0),\phi_{2}\rangle-\langle 1,\log\left((1-\gamma_{1}-\gamma_{2})e^{\phi_{0}}+\gamma_{1}e^{\phi_{1}}+\gamma_{2}e^{\phi_{2}}\right) \rangle\right\}
\end{align}
\end{comment}

\subsection{Radon-Nikodym derivative and the exponential martingale}\label{radon-Nikodym section}
The goal of this section is to obtain an explicit expression of the Radon-Nikodym derivative of the path space measure $\mathbb{P}_{N}^{\bm{H},\gamma}$ with respect to the path space measure $\mathbb{P}_{N}^{\gamma}$. From the literature (see \cite{palmowski,KipnisLandim}) the Girsanov formula states that 
%\begin{equation}
    %    \frac{dP_{N}^{\bm H,\gamma}}{dP_{N}^{\bm\gamma}}(T)=Z_{T,N}^{\bm H}(\bm\mu_N)\,.
%\end{equation}
\begin{align}\label{girsanov-general}
    \log\left(\frac{\dd P_{N}^{\bm H,\gamma}}{\dd P_{N}^{\bm\gamma}}\right)=&\sum_{x=1}^{N}\sum_{\alpha,\beta=0}^{2}\int_{0}^{T}\log\left(\frac{c^{\bfH,\alpha\beta}_{(x,x+1)}(s)}{\eta_{\alpha}^{x}\eta_{\beta}^{x+1}}\right)\dd J_{\alpha\beta}^{x,x+1}(s)\nonumber
    \\-&N^{2}\sum_{x=1}^{N}\sum_{\alpha,\beta=0}^{2}\int_{0}^{T}\eta_{\alpha}^{x}\eta_{\beta}^{x+1}\left(\exp{\left\{\nabla_N H_{\alpha\beta}\left(\tfrac{x}{N},s\right)\right\}}-1\right)\dd s\,.
\end{align}
Here, we represent by $J_{\alpha,\beta}^{x,x+1}(s)$ the number of transitions occurred up to time $s\in[0,T]$ that swap the occupancies of species $\alpha,\beta$ between sites $x$ and $x+1$. Under the path space measure $\mathbb{P}_{N}^{\bm{H},\gamma}$ the random process $\left(J_{\alpha,\beta}^{x,x+1}(s)\right)_{s\geq 0}$ is a Poisson process with intensity $c^{\bfH,\alpha\beta}_{(x,x+1)}(s)$.
In the following result we provide an alternative formula for the Radon-Nikodym derivative defined in \eqref{girsanov-general}. 
\begin{lemma} For all $T\geq 0$, for all $N\in \mathbb{N}$ and for all $H_{1},H_{2}\in C^{2,1}(\mathbb{T}\times [0,T])$ we have that 
\begin{align}\label{exponetial-martingale}
Z_{T,N}^{\bm H}(\bm\mu_N) :=\frac{dP_{N}^{\bm H,\gamma}}{dP_{N}^{\bm\gamma}}= &\exp\left(N\langle \bm\mu_N(T), \bm H(\cdot,T)\rangle  - N\langle \bm\mu_N(0), \bm H(\cdot,0)\rangle\right)\nonumber
\\&\cdot
\exp\left( - \int_0^T e^{-N\langle\bm\mu_N(s), \bm H(\cdot,s)\rangle}\left(\partial_{s}+ N^2\mathcal{L}\right) e^{N\langle \bm\mu_N(s), \bm H(\cdot,s)\rangle}\dd s\right)
\end{align}
Additionally,  under conditions \eqref{requirement-fields}, \eqref{condition-field-1} and \eqref{condition-Field-2} we have
\begin{align}\label{action-generator-exponential}
    &N^{2}e^{-N\langle\bm\mu_N(s), \bm H(\cdot,s)\rangle}\mathcal{L}e^{N\langle\bm\mu_N(s), \bm H(\cdot,s)\rangle}
%&=N^{2}\sum_{x=1}^{N}\sum_{\alpha,\beta=0}^{2}\eta_{\alpha}^{x}\eta_{\beta}^{x+1}\left[e^{N\langle\bm\mu_N(t)(\bm \eta_{\alpha,\beta}^{x,x+1}), \bm H\rangle - N\langle\bm\mu_N(t)(\bm \eta), \bm H\rangle}-1\right]\nonumber
   % \\ 
    =
    \sum_{x=1}^{N}\sum_{\alpha,\beta=0}^{2}\eta_{\alpha}^{x}\eta_{\beta}^{x+1}\left[\exp{\left\{\nabla_N H_{\alpha\beta}\left(\tfrac{x}{N},s\right)\right\}}-1\right]\,,
    \end{align} 
and where
\begin{align}e^{-N\langle\bm\mu_N(s), \bm H\rangle}\partial_{s} e^{N\langle \bm\mu_N(s), \bm H(\cdot,s)\rangle}=\langle \mu_{1}(s),\partial_{s}H_{1}(\cdot,s)\rangle+\langle \mu_{2}(s),\partial_{s}H_{2}(\cdot,s)\rangle\,.
\end{align}
\end{lemma}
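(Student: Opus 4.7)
My plan is to start from the Girsanov identity \eqref{girsanov-general} and show that its right-hand side can be rewritten in the exponential-martingale form claimed in the lemma. The key building blocks are the two auxiliary identities for the action of $\partial_s$ and of $N^2\mathcal{L}$ on $e^{N\langle\bm\mu_N(s),\bm H(\cdot,s)\rangle}$; I would prove these first and then combine them with a pathwise chain-rule for $s\mapsto N\langle \bm\mu_N(s), \bm H(\cdot, s)\rangle$.

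For the action of $N^2\mathcal{L}$, I would write the exponent as
\begin{equation*}
N\langle \bm\mu_N(s), \bm H(\cdot, s)\rangle = \sum_{x=1}^N \bigl[\eta_1^x(s)H_1(\tfrac{x}{N}, s) + \eta_2^x(s)H_2(\tfrac{x}{N}, s)\bigr]
\end{equation*}
and compute the change produced by the local swap $\bm\eta \mapsto \bm\eta^{x,x+1}_{\alpha,\beta}$. A short case analysis over $(\alpha,\beta)\in\{0,1,2\}^2$, $\alpha\neq\beta$, shows that this change equals $(\delta_{\alpha,1}-\delta_{\beta,1})\nabla_N H_1(\tfrac{x}{N}) + (\delta_{\alpha,2}-\delta_{\beta,2})\nabla_N H_2(\tfrac{x}{N})$; under \eqref{requirement-fields}, \eqref{condition-field-1} and \eqref{condition-Field-2} this reduces to $\nabla_N H_{\alpha\beta}(\tfrac{x}{N})$ in every case. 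Substituting into the definition of $\mathcal{L}_{x,x+1}$ and summing over $x$ yields the displayed identity for $e^{-N\langle\cdot\rangle}\mathcal{L}e^{N\langle\cdot\rangle}$. The identity for $\partial_s$ is immediate since only the explicit $s$-dependence of $\bm H$ contributes, producing the pairing of $\bm\mu_N(s)$ with $\partial_s\bm H(\cdot,s)$.

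To close the argument, I would use the pathwise decomposition
\begin{equation*}
N\langle \bm\mu_N(T), \bm H(\cdot, T)\rangle - N\langle \bm\mu_N(0), \bm H(\cdot, 0)\rangle = \int_0^T \partial_s\bigl(N\langle\bm\mu_N(s), \bm H(\cdot, s)\rangle\bigr)\, ds + \sum_{x,\alpha,\beta}\int_0^T \nabla_N H_{\alpha\beta}(\tfrac{x}{N},s)\, dJ_{\alpha\beta}^{x,x+1}(s),
\end{equation*}
which is the elementary fundamental theorem of calculus applied to a pure jump process with smooth time-dependent weight: the smooth evolution of $\bm H$ between jumps contributes the integral, and each jump at bond $(x,x+1)$ of type $(\alpha,\beta)$ contributes the increment $\nabla_N H_{\alpha\beta}(\tfrac{x}{N},s)$ computed in the previous step. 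Inserting this decomposition into the logarithm of the claimed right-hand side of $Z_{T,N}^{\bm H}$, the $\partial_s$-terms cancel against the first auxiliary identity, the jump sum matches $\sum\int \log(c^{\bm H}/c^{\mathrm{sym}})\,dJ = \sum\int \nabla_N H_{\alpha\beta}\,dJ$ appearing in \eqref{girsanov-general}, and the time-integral of $N^2 e^{-N\langle\cdot\rangle}\mathcal{L}e^{N\langle\cdot\rangle}$ reproduces the compensator $-N^2\sum\int \eta_\alpha^x\eta_\beta^{x+1}(e^{\nabla_N H_{\alpha\beta}}-1)\,ds$. I expect the only genuinely fiddly step to be the bookkeeping in the six-case analysis verifying that $\nabla_N H_{\alpha\beta}$ arises uniformly across all ordered pairs $(\alpha,\beta)$; this is precisely where the antisymmetry \eqref{requirement-fields} and the constraint $H_{12}=H_1-H_2$ from \eqref{condition-Field-2} enter in an essential way.
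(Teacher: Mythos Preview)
Your proposal is correct and follows essentially the same route as the paper: both arguments start from the Girsanov identity \eqref{girsanov-general} and hinge on the single key computation that a swap of types $(\alpha,\beta)$ across the edge $(x,x+1)$ changes the exponent $N\langle\bm\mu_N,\bm H\rangle$ by exactly $\nabla_N H_{\alpha\beta}(\tfrac{x}{N})$ once \eqref{requirement-fields}, \eqref{condition-field-1}, \eqref{condition-Field-2} are in force. The only difference is organizational: the paper rewrites the jump integral $\sum\int\nabla_N H_{\alpha\beta}\,\dd J$ by summation by parts in $x$ (passing through the identities for $\dd\eta_\alpha^x$) followed by integration by parts in $s$ to reach the boundary terms, whereas you run the same identity in the opposite direction by applying the fundamental theorem of calculus to the pure-jump process $s\mapsto N\langle\bm\mu_N(s),\bm H(\cdot,s)\rangle$ and reading off the jump increments directly; your packaging avoids the explicit enumeration of the eight $\dd J$ terms but is otherwise the same argument.
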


\begin{proof}
%By using Girsanov's theorem \textcolor{blue}{[Add citation], ask Frank}, we write the logarithm of the Radon-Nikodym derivative as
%\begin{align}\label{girsanov-general}
 %   \log\left(\frac{dP_{N}^{\bm H,\gamma}}{dP_{N}^{\bm\gamma}}(T)\right)=&\sum_{x=1}^{N}\sum_{\alpha,\beta=0}^{2}\int_{0}^{T}\log\left(\frac{c_{(\alpha,\beta),(x,x+1)}^{H_{\alpha \beta}^{x,x+1}}}{\eta_{\alpha}^{x}\eta_{\beta}^{x+1}}\right)dJ_{\alpha\beta}^{x,x+1}\nonumber
  %  \\-&N^{2}\sum_{x=1}^{N}\sum_{\alpha,\beta=0}^{2}\int_{0}^{T}\eta_{\alpha}^{x}\eta_{\beta}^{x+1}\left(\exp{\left\{H_{\alpha\beta}\left(\frac{x+1}{N},s\right)-H_{\alpha\beta}\left(\frac{x}{N},s\right)\right\}}-1\right)ds\,.
%\end{align}
 We consider the first term in the right hand side of equation \eqref{girsanov-general} {\color{black}and we write 
\begin{align}
\sum_{x=1}^{N}\sum_{\alpha,\beta=0}^{2}\int_{0}^{T}\log\left(\frac{c^{\bfH,\alpha\beta}_{(x,x+1)}(s)}{\eta_{\alpha}^{x}\eta_{\beta}^{x+1}}\right)\dd J_{\alpha\beta}^{x,x+1}(s)\nonumber
=&
\sum_{x=1}^{N}\int_{0}^{T}\nabla_N H_{10}\left(\tfrac{x}{N},s\right)\left[\dd J_{10}^{x,x+1}(s)-\dd J_{01}^{x,x+1}(s)\right]\nonumber
    \\&\qquad +
    \sum_{x=1}^{N}\int_{0}^{T}\nabla_N H_{20}\left(\tfrac{x}{N},s\right)\left[\dd J_{20}^{x,x+1}(s)-\dd J_{02}^{x,x+1}(s)\right]\nonumber
    \\&\qquad +
    \sum_{x=1}^{N}\int_{0}^{T}\nabla_N H_{12}\left(\tfrac{x}{N},s\right)\left[\dd J_{12}^{x,x+1}(s)-\dd J_{21}^{x,x+1}(s)\right]\,.
\end{align}}
We use conditions \eqref{requirement-fields}, \eqref{condition-field-1} and \eqref{condition-Field-2}.
Moreover, we denote by $d\eta_{\alpha}^{x}(s)$ the infinitesimal net current of particles of type $\alpha$ crossing the site $x$ up to time $s\in[0,T]$. Therefore, we get 
\begin{align}\label{first-addend}
    &\sum_{x=1}^{N}\left\{\int_{0}^{T}H_{1}\left(\frac{x}{N},s\right)\left[\dd J_{01}^{x,x+1}(s)-\dd J_{10}^{x,x+1}(s)-\dd J_{12}^{x,x+1}(s)+\dd J_{21}^{x,x+1}\right.\right.\nonumber
    \\&\left.\left. \quad\qquad\qquad\qquad\qquad-\dd J_{01}^{x-1,x}(s)+\dd J_{10}^{x-1,x}(s)+\dd J_{12}^{x-1,x}(s)-\dd J_{21}^{x-1,x}(s)\right]\right.\nonumber
    \\+& \left.\int_{0}^{T}H_{2}\left(\frac{x}{N},s\right)\left[\dd J_{02}^{x,x+1}(s)-\dd J_{20}^{x,x+1}(s)-\dd J_{21}^{x,x+1}(s)+\dd J_{12}^{x,x+1}(s)\right.\right.\nonumber
    \\&\left.\left.\quad\qquad\qquad\qquad\qquad-\dd J_{02}^{x-1,x}(s)+\dd J_{20}^{x-1,x}(s)+\dd J_{21}^{x-1,x}(s)-\dd J_{12}^{x-1,x}(s)\right]\right\}\nonumber
    \\=&
    \sum_{x=1}^{N}\left\{\int_{0}^{T}H_{1}\left(\frac{x}{N},s\right)\dd \eta_{1}^{x}(s)+\int_{0}^{T}H_{2}\left(\frac{x}{N},s\right)\dd \eta_{2}^{x}(s)\right\}\nonumber
    \\=&
    N\langle \mu_{1}^{N}(T),H_{1}\left(\cdot,T\right)\rangle+N\langle\mu_{2}^{N}(T),H_{2}\left(\cdot,T\right)\rangle-N\langle \mu_{1}^{N}(0),H_{1}\left(\cdot,0\right)\rangle-N\langle\mu_{2}^{N}(0),H_{2}\left(\cdot,0\right)\rangle\nonumber
    \\-& N\int_{0}^{T}\langle \mu_{1}^{N}(s),\partial_{s}H_{1}(\cdot,s)\rangle \dd s-N\int_{0}^{T}\langle \mu_{1}^{N}(s), \partial_{s}H_{2}(\cdot,s)\rangle \dd s.
\end{align}
where in the first equality we used 
    \begin{align}
        \dd \eta_1^x(s) &= \dd J_{01}^{x,x+1}(s)-\dd J_{10}^{x,x+1}(s)-\dd J_{12}^{x,x+1}(s)+\dd J_{21}^{x,x+1}\nonumber
    \\&\quad\qquad\qquad-\dd J_{01}^{x-1,x}(s)+\dd J_{10}^{x-1,x}(s)+\dd J_{12}^{x-1,x}(s)-\dd J_{21}^{x-1,x}(s),\nonumber\\
        \dd \eta_2^x(s) &=\dd J_{02}^{x,x+1}(s)-\dd J_{20}^{x,x+1}(s)-\dd J_{21}^{x,x+1}(s)+\dd J_{12}^{x,x+1}(s)\nonumber
    \\&\quad\qquad\qquad-\dd J_{02}^{x-1,x}(s)+\dd J_{20}^{x-1,x}(s)+\dd J_{21}^{x-1,x}(s)-\dd J_{12}^{x-1,x}(s),
    \end{align}
and in the last equality of \eqref{first-addend} we have integrated by parts. \\  
To conclude the proof we have to show that \eqref{action-generator-exponential} holds true. By applying the generator \eqref{symmetirc-genrator} we have that 
\begin{align}
     &N^{2}e^{-N\langle\bm\mu_N(s), \bm H(\cdot,s)\rangle}\mathcal{L}e^{N\langle\bm\mu_N(s), \bm H(\cdot,s)\rangle}\nonumber
     \\&=
     N^{2}\sum_{x=1}^{N}\sum_{\alpha,\beta=0}^{2}\eta_{\alpha}^{x}\eta_{\beta}^{x+1}\left(\exp{\left\{N\langle\mu_{1}^{N}(\bm{\eta}_{\alpha,\beta}^{x,x+1}(N^{2}s)),H_{1}(\cdot,s)\rangle+N\langle\mu_{2}^{N}(\bm{\eta}_{\alpha,\beta}^{x,x+1}(N^{2}s)),H_{2}(\cdot,s)\rangle\right\}}\nonumber\right.
     \\&\qquad\qquad \quad\,\cdot\left.\exp{\left\{-N\langle\mu_{1}^{N}(\bm{\eta}(N^{2}s)),H_{1}(\cdot,s)\rangle-N\langle\mu_{2}^{N}(\bm{\eta}(N^{2}s)),H_{2}(\cdot,s)\rangle\right\}}-1\right)\nonumber
    % \\&{\color{red}=
    % N^{2}\sum_{x=1}^{N}\sum_{\alpha,\beta=0}^{2}\eta_{\alpha}^{x}\eta_{\beta}^{x+1}\left(\exp{\left\{\sum_{z=1}^{N}H_{1}\left(\frac{z}{N},s\right)\left[\left(\eta_{1}^{z}-\delta_{\alpha}^{x}+\delta_{\beta}^{x}+\delta_{\alpha}^{x+1}-\delta_{\beta}^{x+1}\right)-\eta_{1}^{z}\right]\right\}}\nonumber\right.}
     %\\&{\color{red}\qquad \qquad \qquad \qquad \;  \cdot\left. \exp{\left\{\sum_{z=1}^{N}H_{2}\left(\frac{z}{N},s\right)\left[\left(\eta_{2}^{z}-\delta_{\alpha}^{x}+\delta_{\beta}^{x}+\delta_{\alpha}^{x+1}-\delta_{\beta}^{x+1}\right)-\eta_{2}^{z}\right]\right\}}
     %-1\right)\nonumber}
     \\&=
     \sum_{x=1}^{N}\sum_{\alpha,\beta=0}^{2}\eta_{\alpha}^{x}\eta_{\beta}^{x+1}\left[\exp{\left\{\nabla_N H_{\alpha\beta}\left(\tfrac{x}{N},s\right)\right\}}-1\right],
\end{align}
where we have used the conditions on the {\color{black}potentials} expressed in equations \eqref{requirement-fields}, \eqref{condition-field-1} and \eqref{condition-Field-2}. 

\end{proof}
\begin{corollary}
Equation \eqref{action-generator-exponential} can be written as
\begin{align}\label{taylor-approx-generator-exponential}
        N^{2}&e^{-N\langle\bm\mu_N(s), \bm H(\cdot,s)\rangle}\mathcal{L}e^{N\langle\bm\mu_N(s), \bm H(\cdot,s)\rangle}\nonumber
        \\&=
        \sum_{x=1}^N\left\{\eta_{1}^{x}\left[\Delta H_{1}\left(\tfrac{x}{N},s\right)+(1-\eta_{1}^{x+1})\left(\nabla H_{1}\left(\tfrac{x}{N},s\right)\right)^{2}\right]
      + 
    \left.\eta_{2}^{x}\left[\Delta H_{2}\left(\tfrac{x}{N},s\right)+(1-\eta_{2}^{x+1})\left(\nabla H_{2}\left(\tfrac{x}{N},s\right)\right)^{2}\right]\right.\right.\nonumber
   %\\&\qquad-
   %\eta_{1}^{x}\eta_{1}^{x+1}\left[\left(\nabla H_{1}\left(\tfrac{x}{N},s\right)\right)^{2}\right]\nonumber
   %-
   %\left.\eta_{2}^{x}\eta_{2}^{x+1}\left[\left(\nabla H_{2}\left(\tfrac{x}{N},s\right)\right)^{2}\right]\right.\nonumber\
   \\&\qquad\left.-
   \eta_{1}^{x}\eta_{2}^{x+1}\left[\nabla H_{1}\left(\tfrac{x}{N},s\right)\;\nabla H_2\left(\tfrac{x}{N},s\right)\right]
   -
   \eta_{2}^{x}\eta_{1}^{x+1}\left[\nabla H_{1}\left(\tfrac{x}{N},s\right)\;\nabla H_2\left(\tfrac{x}{N},s\right)\right]\right\} +\mathcal{O}(1)\,.
    \end{align}
   Furthermore, for all $H_{1},H_{2}\in C^{2,1}(\mathbb{T}\times [0,T])$ and for all $N\in \mathbb{N}$ there exists a constant $c>0$ such that 
\begin{equation}\label{estimate-for-expMartingale}
   \frac{dP_{N}^{\bm H,\gamma}}{dP_{N}^{\bm\gamma}}%= Z_{s,N}^{\bm{H}}(\bm{\mu}_{N})
   \leq \exp{\left\{cN\right\}}\,.
\end{equation}
\end{corollary}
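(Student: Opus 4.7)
The plan is to derive \eqref{taylor-approx-generator-exponential} by a second-order Taylor expansion of \eqref{action-generator-exponential}, and then to obtain the bound \eqref{estimate-for-expMartingale} directly from the martingale representation \eqref{exponetial-martingale} combined with \eqref{taylor-approx-generator-exponential}.

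For the first claim I expand
\[ e^{\nabla_N H_{\alpha\beta}} - 1 = \nabla_N H_{\alpha\beta} + \tfrac{1}{2}(\nabla_N H_{\alpha\beta})^2 + O((\nabla_N H_{\alpha\beta})^3), \]
and use $\nabla_N H_{\alpha\beta}(\tfrac{x}{N},s) = \tfrac{1}{N}\nabla H_{\alpha\beta}(\tfrac{x}{N},s) + \tfrac{1}{2N^2}\Delta H_{\alpha\beta}(\tfrac{x}{N},s) + O(N^{-3})$ by $C^{2,1}$-smoothness, so that after the $N^2$ pre-factor in \eqref{action-generator-exponential} the cubic remainder contributes at most $O(1)$ to the full sum. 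The linear term requires discrete summation by parts. By the antisymmetry $H_{\beta\alpha}=-H_{\alpha\beta}$, the identifications \eqref{condition-field-1}--\eqref{condition-Field-2}, and the exclusion constraint $\eta_0^x=1-\eta_1^x-\eta_2^x$, a direct algebraic computation yields the key identity
\[ \sum_{\alpha,\beta=0}^{2} \eta_\alpha^x \eta_\beta^{x+1}\, \nabla_N H_{\alpha\beta}(\tfrac{x}{N}) = -(\eta_1^{x+1}-\eta_1^x)\nabla_N H_1(\tfrac{x}{N}) - (\eta_2^{x+1}-\eta_2^x)\nabla_N H_2(\tfrac{x}{N}). \]
Multiplying by $N^2$ and shifting the index $x\mapsto x-1$ rewrites the right-hand side as $\sum_x \eta_1^x\, N^2[H_1(\tfrac{x+1}{N})-2H_1(\tfrac{x}{N})+H_1(\tfrac{x-1}{N})]$ plus its $H_2$-analogue, and the estimate $N^2\cdot(\text{second difference})=\Delta H(\tfrac{x}{N})+O(N^{-2})$ produces $\sum_x[\eta_1^x\Delta H_1+\eta_2^x\Delta H_2]$ up to an $O(1)$ remainder. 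For the quadratic term, $N^2\cdot\tfrac{1}{2}(\nabla_N H_{\alpha\beta})^2 = \tfrac{1}{2}(\nabla H_{\alpha\beta}(\tfrac{x}{N}))^2+O(N^{-1})$; expanding $\sum_{\alpha,\beta}\eta_\alpha^x\eta_\beta^{x+1}(\nabla H_{\alpha\beta})^2$ with $H_{01}=-H_1$, $H_{02}=-H_2$, $H_{12}=H_1-H_2$ and simplifying via the exclusion constraint yields coefficient $\eta_1^x+\eta_1^{x+1}-2\eta_1^x\eta_1^{x+1}$ in front of $(\nabla H_1)^2$, its analogue for $(\nabla H_2)^2$, and $-2(\eta_1^x\eta_2^{x+1}+\eta_2^x\eta_1^{x+1})$ in front of $\nabla H_1\,\nabla H_2$. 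The elementary identity $\tfrac{1}{2}(\eta_1^x+\eta_1^{x+1}-2\eta_1^x\eta_1^{x+1})=\eta_1^x(1-\eta_1^{x+1})+\tfrac{1}{2}(\eta_1^{x+1}-\eta_1^x)$, combined with a further summation by parts (whose remainder $\sum_x\eta_1^x[(\nabla H_1(\tfrac{x-1}{N}))^2-(\nabla H_1(\tfrac{x}{N}))^2]$ is $O(1)$ by $C^{2,1}$-smoothness), absorbs the correction into the $O(1)$ remainder and produces the target coefficient $\eta_1^x(1-\eta_1^{x+1})$, yielding \eqref{taylor-approx-generator-exponential}.

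For \eqref{estimate-for-expMartingale} I start from \eqref{exponetial-martingale}. Since $\eta_\alpha^x\in\{0,1\}$ one has $|\langle\mu_{\alpha,N}(s),H_\alpha(\cdot,s)\rangle|\leq \|H_\alpha\|_\infty$, so the boundary term is bounded by $2N(\|H_1\|_\infty + \|H_2\|_\infty)$. Inside the time integral, the $\partial_s$-contribution equals $N\langle\bm\mu_N(s),\partial_s\bm H(\cdot,s)\rangle$, bounded by $N(\|\partial_s H_1\|_\infty + \|\partial_s H_2\|_\infty)$, while by \eqref{taylor-approx-generator-exponential} the $N^2\mathcal{L}$-contribution is bounded in absolute value by a constant multiple of $N$ depending on $\|H_1\|_{C^{2,1}}$ and $\|H_2\|_{C^{2,1}}$. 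Integrating over $[0,T]$ and combining, the exponent in \eqref{exponetial-martingale} is $O(N)$, whence the claim.

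The main obstacle is the bookkeeping in the first part: the triple sum over species contains terms that are individually of order $N^2$ after the $N^2$ pre-factor in the linear expansion, and it is only after exploiting the antisymmetry of $H_{\alpha\beta}$ together with the identifications $H_{10}=H_1$, $H_{20}=H_2$, $H_{12}=H_1-H_2$ that these dominant contributions cancel to produce the expected $O(N)$ expression; keeping track of the many remainder terms and verifying each is uniformly $O(1)$ is tedious but elementary.
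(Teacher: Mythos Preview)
Your proof is correct and follows essentially the same route as the paper's: a second-order Taylor expansion of \eqref{action-generator-exponential}, followed by simplification of the resulting linear and quadratic sums using the antisymmetry \eqref{requirement-fields}, the identifications \eqref{condition-field-1}--\eqref{condition-Field-2}, and the exclusion constraint, and then the trivial bound on \eqref{exponetial-martingale} via $\eta_\alpha^x\le 1$ and $H_\alpha\in C^{2,1}$. The paper records only the outcome of the algebra (``we use conditions \eqref{requirement-fields}, \eqref{condition-field-1}, \eqref{condition-Field-2}, obtaining \ldots''), whereas you make explicit the two intermediate steps it suppresses: the linear-term identity $\sum_{\alpha,\beta}\eta_\alpha^x\eta_\beta^{x+1}\nabla_N H_{\alpha\beta}=-(\eta_1^{x+1}-\eta_1^x)\nabla_N H_1-(\eta_2^{x+1}-\eta_2^x)\nabla_N H_2$ followed by summation by parts, and the symmetrization $\tfrac12(\eta_1^x+\eta_1^{x+1}-2\eta_1^x\eta_1^{x+1})=\eta_1^x(1-\eta_1^{x+1})+\tfrac12(\eta_1^{x+1}-\eta_1^x)$ with a further summation by parts absorbing the telescoping correction into $\mathcal{O}(1)$.
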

\begin{proof}
First we expand the exponential function on the right hand side of \eqref{action-generator-exponential} by using the Taylor series and we also use \eqref{exclusion-constraint}. Finally, we use conditions \eqref{requirement-fields}, \eqref{condition-field-1} and \eqref{condition-Field-2}, obtaining  
\begin{align}\label{second-addend}
    \sum_{x=1}^{N}&\sum_{\alpha,\beta=0}^{2}\eta_{\alpha}^{x}\eta_{\beta}^{x+1}\left[\exp{\left\{\nabla_N H_{\alpha\beta}\left(\tfrac{x}{N},s\right)\right\}}-1\right]\nonumber\\
    =&N^{2}\sum_{x=1}^{N}\sum_{\alpha\beta=0}^{2}\left\{H_{\alpha\beta}\left(\tfrac{x+1}{N},s\right)-H_{\alpha\beta}\left(\tfrac{x}{N},s\right)
    +\frac{1}{2}\left(H_{\alpha\beta}\left(\tfrac{x+1}{N}\right)-H_{\alpha\beta}\left(\tfrac{x}{N},s\right)\right)^{2}\right\}\eta_{\alpha}^{x}\eta_{\beta}^{x+1}+\mathcal{O}(1)\nonumber
    %\\=&
    %\sum_{x=1}^{N}\left\{\eta_{1}^{x}\left(\Delta_{N}H_{1}\left(\tfrac{x}{N},s\right)+\left(\nabla H_{1}\left(\tfrac{x}{N},s\right)\right)^{2}\right)+\eta_{2}^{x}\left(\Delta_{N}H_{2}\left(\tfrac{x}{N},s\right)+\left(\nabla H_{2}\left(\tfrac{x}{N},s\right)\right)^{2}\right)\right.\nonumber
    %\\&\qquad -\left.\eta_{1}^{x}\eta_{1}^{x}\left(\nabla H_{1}\left(\tfrac{x}{N},s\right)\right)^{2}-\eta_{2}^{x}\eta_{2}^{x}\left(\nabla H_{2}\left(\tfrac{x}{N},s\right)\right)^{2}\right.\nonumber
    %\\& \qquad - 
    %\left.\eta_{1}^{x}\eta_{2}^{x+1}\left(\nabla H_{1}\left(\tfrac{x}{N},s\right) \nabla H_{2}\left(\tfrac{x}{N},s\right)\right)-
    %\eta_{2}^{x}\eta_{1}^{x+1}\left(\nabla H_{1}\left(\tfrac{x}{N},s\right) \nabla H_{2}\left(\tfrac{x}{N},s\right)\right)\right\}+\mathcal{O}(1)\nonumber
    \\=&\sum_{x=1}^{N}
    \left\{\eta_{1}^{x}\Delta_{N}H_{1}\left(\tfrac{x}{N},s\right)+\eta_{2}^{x}\Delta_{N}H_{2}\left(\tfrac{x}{N},s\right)+\eta_{1}^{x}\left(1-\eta_{1}^{x+1}\right)\left(\nabla H_{1}\left(\tfrac{x}{N},s\right)\right)^{2}\right.\nonumber
    \\&\qquad+
    \left.\eta_{2}^{x}\left(1-\eta_{2}^{x+1}\right)\left(\nabla H_{2}\left(\tfrac{x}{N},s\right)\right)^{2}-\left(\eta_{1}^{x}\eta_{2}^{x+1}+
    \eta_{2}^{x}\eta_{1}^{x+1}\right)\left(\nabla H_{1}\left(\tfrac{x}{N},s\right) \nabla H_{2}\left(\tfrac{x}{N},s\right)\right)\right\}+\mathcal{O}(1)\,.
\end{align}
    The estimate \eqref{estimate-for-expMartingale} follows, since $\eta_{\alpha}^{x}\leq 1$ for all $a\in \{1,2\}$ and for all $x\in \mathbb{T}_{N}$ and because the functions $H_{1}(\cdot,\cdot),H_{2}(\cdot,\cdot)$ belong to the space $C^{2,1}(\mathbb{T}\times [0,T])$.  
\end{proof}
\begin{corollary}
    The super exponential estimate \eqref{supo} holds also for the path space measure $\mathbb{P}_{N}^{\gamma,\bfH}$.
\end{corollary}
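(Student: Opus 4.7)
The plan is to reduce the statement to the superexponential estimate already established for $\mathbb{P}_N^{\bm\gamma}$ (and in turn for $\mathbb{P}_N^{1/3}$) by using a crude bound on the Radon--Nikodym derivative. This is the same strategy as in the previous corollary, where passage from $\mathbb{P}_N^{1/3}$ to $\mathbb{P}_N^{\bm\gamma}$ only cost a factor $3^N$; here passage from $\mathbb{P}_N^{\bm\gamma}$ to $\mathbb{P}_N^{\bm\gamma,\bfH}$ costs a factor $e^{cN}$, which on the logarithmic scale $\tfrac{1}{N}\log(\cdot)$ contributes only a bounded constant and is therefore swamped by the $-\infty$ limit.

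Concretely, I would set $A_{N,\epsilon,\delta} := \bigl\{\tfrac{1}{N}\int_0^T V_{N,\epsilon}(\bm\eta(s))\,\mathrm{d}s \geq \delta\bigr\}$ and write, using the Radon--Nikodym derivative from Lemma \eqref{exponetial-martingale} together with the deterministic upper bound \eqref{estimate-for-expMartingale},
\begin{align*}
\mathbb{P}_N^{\bm\gamma,\bfH}(A_{N,\epsilon,\delta})
= \mathbb{E}_{\mathbb{P}_N^{\bm\gamma}}\!\left[\frac{\mathrm{d}\mathbb{P}_N^{\bm\gamma,\bfH}}{\mathrm{d}\mathbb{P}_N^{\bm\gamma}}\mathbf{1}_{A_{N,\epsilon,\delta}}\right]
\leq e^{cN}\,\mathbb{P}_N^{\bm\gamma}(A_{N,\epsilon,\delta}).
\end{align*}
Composing with the bound $\mathbb{P}_N^{\bm\gamma}(A) \leq 3^N \mathbb{P}_N^{1/3}(A)$ from the previous corollary and taking $\tfrac{1}{N}\log$, one obtains
\begin{align*}
\frac{1}{N}\log \mathbb{P}_N^{\bm\gamma,\bfH}(A_{N,\epsilon,\delta}) \;\leq\; c + \log 3 + \frac{1}{N}\log \mathbb{P}_N^{1/3}(A_{N,\epsilon,\delta}).
\end{align*}
Applying Theorem \ref{superexponential estimate}, the last term tends to $-\infty$ in the iterated limit $\uplim_{\epsilon\to\infty}\uplim_{N\to\infty}$, which absorbs the finite constants $c+\log 3$ and yields the desired conclusion.

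There is no real obstacle here, which is why the authors relegate this to a short corollary: the only ingredient that must already be in hand is the constant upper bound \eqref{estimate-for-expMartingale} on the exponential martingale $Z_{T,N}^{\bfH}$, and this is exactly where the assumed smoothness of $H_1,H_2\in C^{2,1}(\mathbb{T}\times[0,T])$ is used (the rates are uniformly bounded in $N$, so the Girsanov exponent grows at most linearly in $N$). If anything, the only point worth checking is that the constant $c$ in \eqref{estimate-for-expMartingale} is uniform in $N$ (as is proved in the preceding corollary via the Taylor expansion in \eqref{second-addend}); once that is granted, the argument is a one-line change-of-measure sandwich.
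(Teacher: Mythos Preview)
Your proposal is correct and follows essentially the same route as the paper: a change of measure together with the deterministic bound \eqref{estimate-for-expMartingale} yields $\tfrac{1}{N}\log\mathbb{P}_N^{\bm\gamma,\bfH}(A)\le c+\tfrac{1}{N}\log\mathbb{P}_N^{\bm\gamma}(A)$, after which the already-proved superexponential estimate gives the result. The only cosmetic difference is that you chain one step further to $\mathbb{P}_N^{1/3}$ (picking up the extra $\log 3$), whereas the paper stops at $\mathbb{P}_N^{\bm\gamma}$ and invokes the previous corollary implicitly; both are equivalent.
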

\begin{proof}
For any measurable set $A\subset D\left([0,T],\mathcal{M}_{1}\times \mathcal{M}_{1}\right)$ we have the following chain of inequalities
\begin{align}
    \frac{1}{N}\log\mathbb{P}_{N}^{\bm\gamma,\bfH}(A)
    =&\frac{1}{N}\log\E^{\bm\gamma,\bfH}_{N}\left[\mathbbm{1}_{\{A\}}\frac{\dd\mathbb{P}_{N}^{\bm\gamma}}{\dd\mathbb{P}_{N}^{\bm\gamma,\bm{H}}}\frac{\dd \mathbb{P}_{N}^{\bm\gamma,\bfH}}{\dd \mathbb{P}_{N}^{\bm\gamma}}\right]
    =\frac{1}{N}\log \mathbb{E}^{\bm\gamma}_{N}\left[\mathbbm{1}_{\{A\}}\frac{d \mathbb{P}_{N}^{\bm\gamma,\bfH}}{d \mathbb{P}_{N}^{\bm\gamma}}\right]\nonumber
    \\\leq &
    \frac{1}{N}\log \mathbb{E}_N^{\bm\gamma}\left[\mathbbm{1}_{\{A\}}\right]+c\,.
\end{align}
Here we have changed the path space measure from $\mathbb{P}_{N}^{\bm{\gamma},\bm{H}}$ to $\mathbb{P}_{N}^{\bm{\gamma}}$ and we have used the estimate \eqref{estimate-for-expMartingale}. Therefore, by taking the limit  $N\to \infty$ and by using Theorem \ref{superexponential estimate} we have the result.
\end{proof}

\subsection{Exponential tightness}\label{exponential tightness section}

\begin{theorem}[Exponential Tightness]\label{exponential tightness}
    For any $n\in\mathbb{N}$ there exists a compact set $\mathcal{K}_n\subset D([0,T],\mathcal{M}_1\times \mathcal{M}_1)$ such that 
    \begin{equation}
    \uplim_{N\to\infty} \frac{1}{N} \log \p^{\bm\gamma}_N(\bm\mu_N \notin \mathcal{K}_n) = -n.
    \end{equation}
\end{theorem}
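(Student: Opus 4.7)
The plan is to follow the standard KOV-type exponential tightness argument, exploiting that the state space $\mathcal{M}_1\times \mathcal{M}_1$ is itself compact (since $\mathbb{T}$ is compact and total masses are bounded by $1$). Under this observation, a subset of $D([0,T],\mathcal{M}_1\times \mathcal{M}_1)$ is relatively compact in the Skorokhod topology if and only if it is controlled uniformly in the c\`adl\`ag modulus of continuity. Since the weak topology on $\mathcal{M}_1$ is metrized by the countable family of pairings with smooth test functions, the proof reduces to establishing, for each fixed $\bm G=(G_1,G_2)\in C^\infty(\mathbb{T})^2$ and any $\epsilon,L>0$, a $\delta=\delta(\epsilon,L,\bm G)>0$ such that
\begin{equation*}
\uplim_{N\to\infty}\frac{1}{N}\log\p^{\bm\gamma}_N\!\left(\sup_{|s-t|\leq \delta}\bigl|\langle\bm\mu_N(t),\bm G\rangle - \langle\bm\mu_N(s),\bm G\rangle\bigr|>\epsilon\right) \leq -L.
\end{equation*}
Given such estimates for a countable dense family $\{\bm G_k\}$, one builds $\mathcal{K}_n$ by the usual diagonal construction: intersect the modulus-control sets over $k$ with $\epsilon_k,\delta_k$ chosen so that the per-$k$ probabilities are bounded by $e^{-(n+k)N}$ and summable in $k$.

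For each fixed $\bm G$, Dynkin's formula yields the semimartingale decomposition
\begin{equation*}
\langle\bm\mu_N(t),\bm G\rangle = \langle\bm\mu_N(0),\bm G\rangle + \int_0^t N^2\mathcal{L}\langle\bm\mu_N(r),\bm G\rangle\,\dd r + M_N^{\bm G}(t),
\end{equation*}
with $M_N^{\bm G}$ a martingale. Summation by parts gives $N^2\mathcal{L}\langle\mu_{\alpha,N}(r),G_\alpha\rangle = N\sum_x \eta_\alpha^x(r)\,\Delta_N G_\alpha(\tfrac{x}{N})$, which is deterministically bounded by $\|G_\alpha''\|_\infty$ since $\eta_\alpha^x\in\{0,1\}$ and $\Delta_N G_\alpha = O(N^{-2})$. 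Thus the drift contribution to the modulus is Lipschitz in $t$ with a deterministic constant independent of $N$, and can be absorbed.

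For the martingale part, I would apply the exponential martingale identity \eqref{exponetial-martingale} with the constant-in-time choice $\bm H=\theta\bm G$, which yields that
\begin{equation*}
Z_N^{\theta,\bm G}(t) := \exp\!\left(\theta N\bigl(\langle\bm\mu_N(t),\bm G\rangle-\langle\bm\mu_N(0),\bm G\rangle\bigr) - \int_0^t \Phi_N^{\theta,\bm G}(r)\,\dd r\right)
\end{equation*}
is a $\mathbb{P}_N^{\bm\gamma}$-martingale of mean one, where $\Phi_N^{\theta,\bm G}(r) = e^{-\theta N\langle\bm\mu_N(r),\bm G\rangle}N^2\mathcal{L}e^{\theta N\langle\bm\mu_N(r),\bm G\rangle}$. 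The Taylor expansion performed in the derivation of \eqref{taylor-approx-generator-exponential} shows that the $O(\theta)$ contribution telescopes into the drift term above, so after subtracting it the remainder is bounded by $CN\theta^2\|\nabla\bm G\|_\infty^2$ for $|\theta|\leq 1$. Exponential Chebyshev applied to $Z_N^{\theta,\bm G}$ on an interval $[s,t]$ with subsequent optimization in $\theta$ yields the sub-Gaussian increment bound
\begin{equation*}
\p^{\bm\gamma}_N\bigl(|\langle\bm\mu_N(t),\bm G\rangle-\langle\bm\mu_N(s),\bm G\rangle|>\epsilon\bigr) \leq 2\exp\!\left(-\frac{c\,\epsilon^2 N}{|t-s|}\right).
\end{equation*}
A union bound over a grid of mesh $\delta$ in $[0,T]$, combined with Doob's maximal inequality applied to the positive martingale $Z_N^{\theta,\bm G}$ to control fluctuations between consecutive grid points, then produces the required running-supremum estimate.

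The main technical obstacle is verifying the $O(\theta^2)$-per-site bound on $\Phi_N^{\theta,\bm G}$ after the drift has been subtracted; this relies on the expansion already carried out in the derivation of \eqref{taylor-approx-generator-exponential} together with the antisymmetry condition \eqref{requirement-fields}, and on $\bm G\in C^\infty$ to uniformly control the remainders. All remaining ingredients---the discretization, the passage from increment bounds to running-supremum bounds via Doob, and the diagonal construction of $\mathcal{K}_n$ from the countable dense family---are routine once this key quadratic bound is in place.
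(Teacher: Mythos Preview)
Your proposal is correct and follows essentially the same route as the paper: both arguments reduce exponential tightness to modulus-of-continuity control of $\langle\bm\mu_N(\cdot),\bm G\rangle$ for a countable family of smooth $\bm G$, obtain this control via the mean-one exponential martingale $Z_{t,N}^{\lambda\bm G}$ combined with Doob's maximal inequality and a deterministic $O(\delta)$ bound on the compensator, and then assemble $\mathcal{K}_n$ by the standard diagonal intersection over the countable family. The only cosmetic difference is that you first peel off the Dynkin drift and then optimize in $\theta$ to phrase things as a sub-Gaussian increment bound, whereas the paper keeps the decomposition inside the exponential martingale identity and simply sends $\lambda\to\infty$ after $\delta\to0$; both lead to the same estimate.
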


With exponential tightness, the large deviation upper bound for closed sets $\mathcal{C} \in D([0,T], \mathcal{M}_1\times \mathcal{M}_1)$ follows from the upper bound for compact sets. Namely, for every $n\in\mathbb{N}$ we have that 
\begin{equation}
\uplim_{N\to\infty} \frac{1}{N} \log \p^{\bm\gamma}_N (\bm \mu_N \in \mathcal{C}) \leq   \uplim_{N\to\infty} \frac{1}{N} \log\left[  \p^{\bm\gamma}_N (\bm \mu_N \in \mathcal{C}\cap \mathcal{K}_n) \vee \p^{\bm\gamma}_N(\bm \mu_N \notin \mathcal{K}_n)\right], 
\end{equation}
    where $\mathcal{C}\cap \mathcal{K}_n$ is a compact set. 
    
We will prove Theorem \ref{exponential tightness} following the same approach as in \cite[Section 10.4]{KipnisLandim}. We start with the following Lemma.
\begin{lemma}\label{continuity}
    For every $\varepsilon>0$ and $\bm G \in C^2(\mathbb{T})\times C^2(\mathbb{T})$ 
    \begin{equation}\label{Arzela Ascoli}
    \lim_{\delta\to\infty} \lim_{N\to\infty} \frac{1}{N}\log \p^{\bm\gamma}_N\left(\sup_{|s-t|<\delta} \left| \left<\bm\mu_N(t), \bm G\right> - \left<\bm\mu_N(s), \bm G\right> \right| \geq \varepsilon \right)= -\infty.
    \end{equation}
\end{lemma}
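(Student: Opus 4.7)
I would follow the standard martingale + discretization strategy of Kipnis--Landim \cite{KipnisLandim} Chapter 10. The key idea is to decompose $\langle \bm\mu_N(t),\bm G\rangle - \langle \bm\mu_N(s), \bm G\rangle$ via Dynkin's formula into an integrated generator part (which is deterministically small when $|t-s|$ is small) and a martingale part (which is controlled by an exponential Chebyshev inequality using the exponential martingale from Section \ref{radon-Nikodym section}).

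\textbf{Step 1: Dynkin decomposition.} Write $M_N^{\bm G}(t) := \langle \bm\mu_N(t), \bm G\rangle - \langle \bm\mu_N(0), \bm G\rangle - \int_0^t N^2\mathcal{L}\langle \bm\mu_N(r), \bm G\rangle\,\dd r$, which is a $\mathbb{P}^{\bm\gamma}_N$-martingale. A direct computation using summation by parts gives $N^2\mathcal{L}\langle \bm\mu_N(r), \bm G\rangle = \tfrac{1}{N}\sum_{x,\alpha}\Delta_N G_\alpha(\tfrac{x}{N}) \eta_\alpha^x(N^2 r) + O(1/N)$, which is bounded by a constant $C_1$ depending only on $\|\bm G\|_{C^2}$. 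Consequently $|\int_s^t N^2\mathcal{L}\langle\bm\mu_N(r),\bm G\rangle\,\dd r|\le C_1\delta$ whenever $|t-s|<\delta$, deterministically. Choosing $\delta<\varepsilon/(2C_1)$ reduces the problem to showing that, for any $\varepsilon'>0$,
\begin{equation*}
\lim_{\delta\to 0}\uplim_{N\to\infty}\frac{1}{N}\log\mathbb{P}_N^{\bm\gamma}\Bigl(\sup_{|t-s|<\delta}|M_N^{\bm G}(t)-M_N^{\bm G}(s)|\ge\varepsilon'\Bigr) = -\infty.
\end{equation*}

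\textbf{Step 2: Discretization and exponential martingale.} Partition $[0,T]$ into subintervals $[t_i,t_{i+1}]$ of length $\delta$. Any pair $s,t$ with $|s-t|<\delta$ lies in the same or adjacent subintervals, so a triangle inequality bounds the supremum above by $3\max_i\sup_{t\in[t_i,t_{i+1}]}|M_N^{\bm G}(t)-M_N^{\bm G}(t_i)|$. A union bound over the $T/\delta$ subintervals costs only a $\log(T/\delta)$ factor (negligible after dividing by $N$). For each $i$, use the mean-$1$ exponential martingale analogous to \eqref{exponetial-martingale}: for any $\theta\in\mathbb{R}$,
\begin{equation*}
Z_N^{\theta\bm G}(t) = \exp\Bigl(N\theta M_N^{\bm G}(t) - \int_0^t \Phi_N^{\theta\bm G}(r)\,\dd r\Bigr),
\end{equation*}
where $\Phi_N^{\theta\bm G}(r) = e^{-N\theta\langle\bm\mu_N(r),\bm G\rangle}N^2\mathcal{L} e^{N\theta\langle\bm\mu_N(r),\bm G\rangle} - N^2\theta\mathcal{L}\langle\bm\mu_N(r),\bm G\rangle$. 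A Taylor expansion in the style of \eqref{second-addend}, together with $|\eta_\alpha^x|\le 1$ and $\bm G\in C^2$, shows that $|\Phi_N^{\theta\bm G}(r)|\le C_2(\theta) N$ uniformly in $r$ and in the configuration.

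\textbf{Step 3: Doob + optimization.} Doob's maximal inequality applied to the submartingale $Z_N^{\pm\theta\bm G}(t)/Z_N^{\pm\theta\bm G}(t_i)$ gives
\begin{equation*}
\mathbb{P}_N^{\bm\gamma}\Bigl(\sup_{t\in[t_i,t_{i+1}]}\pm(M_N^{\bm G}(t)-M_N^{\bm G}(t_i))\ge\tfrac{\varepsilon'}{3}\Bigr)
\le \exp\bigl(-\tfrac{N\theta\varepsilon'}{3}+C_2(\theta)\delta N\bigr).
\end{equation*}
Combining this with the union bound, taking $\uplim_{N\to\infty}\tfrac{1}{N}\log$, and then sending $\delta\to 0$ leaves $-\theta\varepsilon'/3$; letting $\theta\to\infty$ yields $-\infty$, as required. \textbf{Main obstacle.} The only nontrivial step is the uniform bound on the compensator $\Phi_N^{\theta\bm G}$, for which the crucial input is that discrete gradients of $\bm G$ are of order $1/N$, so that the Taylor expansion of the exponential rate $\exp(\theta\nabla_N G_\alpha)$ produces a remainder of order $\theta^2/N^2$ per bond and of total order $N$ after summing over the $N$ bonds and multiplying by $N^2$; this matches the scaling needed in the exponent and makes the argument work.
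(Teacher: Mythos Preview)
Your proposal is correct and follows essentially the same exponential-martingale/Doob strategy as the paper. The only organisational difference is that the paper works directly with the exponential martingale $Z_{t,N}^{\lambda\bfG}$ of \eqref{exponetial-martingale} and splits the event $\{\langle\bm\mu_N(t)-\bm\mu_N(0),\bfG\rangle\ge\varepsilon\}$ into $\{\tfrac1N\log Z\ge\tfrac12\lambda\varepsilon\}$ and $\{\text{compensator}\ge\tfrac12\lambda\varepsilon\}$, whereas you first peel off the integrated generator via Dynkin and then exponentiate the remaining martingale; the estimates are identical. One minor point: the paper reduces all subintervals to $[0,\delta]$ by invoking invariance of $\nu_N^{\bm\gamma}$, which is literally true only for constant profiles; your union bound over the $T/\delta$ subintervals avoids this issue at no extra cost, since $\tfrac1N\log(T/\delta)\to0$.
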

\begin{proof}
    First note that
    \begin{equation}\label{contained}
    \left\{ \sup_{|s-t|<\delta} \left| \left<\bm\mu_N(t), \bm G\right> - \left<\bm\mu_N(s), \bm G\right> \right| \geq \varepsilon\right\} \subset \bigcup_{k=0}^{[T\delta^{-1}]} \left\{\sup_{k\delta \leq t<(k+1)\delta} \left|\left<\bm\mu_N(t), \bm G\right> - \left<\bm\mu_N(k\delta), \bm G\right> \right|>\frac{\varepsilon}{4}\right\}.
    \end{equation}
    %\begin{comment}
   % \textcolor{red}{
   % Namely, for any $0\leq t\leq T$ define $t_\delta$ as the largest $k\delta$ such that $k\delta\leq t$, then under the complement of the above set on the right, we have that 
   % \begin{align*}
    %    &\sup_{|s-t|<\delta} \left| \left<\bm\mu_N(t), \bm G\right> - \left<\bm\mu_N(s), \bm G\right> \right| \\
     %    \leq& \sup_{|s-t|<\delta} \left| \left<\bm\mu_N(t),\bm G\right> -  \left<\bm\mu_N(t_\delta),\bm G\right>\right| +\big| \left<\bm\mu_N(t_\delta),\bm G\right> -  \big<\bm\mu_N(t_\delta^-),\bm G\big>\big| \\
      %  &\qquad +\big| \big<\bm\mu_N(t_\delta^-),\bm G\big> -  \left<\bm\mu_N(t_\delta),\bm G\right>\big| +\left| \left<\bm\mu_N(t_\delta),\bm G\right> -  \left<\bm\mu_N(s),\bm G\right>\right| \\
       % <& \frac{3\varepsilon}{4} + \sup_{|s-t|<\delta} \big| \left<\bm\mu_N(t_\delta),\bm G\right> -  \big<\bm\mu_N(t_\delta^-),\bm G\big>\big|.
   % \end{align*}
  %  Furthermore, since at most one jump can occur in the time interval $[t_\delta^-, t_\delta]$ we see that $\big| \left<\bm\mu_N(t_\delta),\bm G\right> -  \big<\bm\mu_N(t_\delta^-),\bm G\big>\big| \leq \frac{2}{N} ||\bm G||_\infty$. Hence for large enough $N$ we indeed find that $\sup_{|s-t|<\delta} \left| \left<\bm\mu_N(t), \bm G\right> - \left<\bm\mu_N(s), \bm G\right> \right| < \varepsilon$, and so \eqref{contained} holds.}
    %\end{comment}
    Therefore, we find that% {\color{red} using that $\lim_{N\to\infty} \frac{1}{N}\log(a+b) = \lim_{N\to\infty} \frac{1}{N}\log(a\vee b)$}
    \begin{align*}
    &\uplim_{N\to\infty} \frac{1}{N}\log \p^{\bm\gamma}_N\left(\sup_{|s-t|<\delta} \left| \left<\bm\mu_N(t), \bm G\right> - \left<\bm\mu_N(s), \bm G\right> \right| \geq \varepsilon \right)\\
     \leq &\uplim_{N\to\infty} \frac{1}{N}\log \sup_{k=0}^{[T\delta^{-1}]} \p^{\bm\gamma}_N\left(\sup_{k\delta \leq t<(k+1)\delta} \left| \left<\bm\mu_N(t), \bm G\right> - \left<\bm\mu_N(k\delta), \bm G\right> \right| \geq \varepsilon \right)\\
     =&\uplim_{N\to\infty} \frac{1}{N}\log  \p^{\bm\gamma}_N\left(\sup_{0\leq t<\delta} \left| \left<\bm\mu_N(t), \bm G\right> - \left<\bm\mu_N(0), \bm G\right> \right| \geq \varepsilon \right),
    \end{align*}
    where we used that $\p^{\bm\gamma}_N$ is an invariant measure for the last equality.
    
   % \textcolor{red}{
    %Now note that 
    %\begin{align*}
    %&\p^{\bm\gamma}_N\left(\sup_{0\leq t<\delta} \left| \left<\bm\mu_N(t), \bm G\right> - \left<\bm\mu_N(0), \bm G\right> \right| \geq \varepsilon \right) \\
    %\leq&\, \p^{\bm\gamma}_N\left(\sup_{0\leq t<\delta} \left<\bm\mu_N(t), \bm G\right> - \left<\bm\mu_N(0), \bm G\right>  \geq \varepsilon \right)\\
    %+&\, \p^{\bm\gamma}_N\left(\sup_{0\leq t<\delta}  \left<\bm\mu_N(t), -\bm G\right> - \left<\bm\mu_N(0), -\bm G\right>  \geq \varepsilon \right).
    %\end{align*}}
    Since we are considering every $\bm G$, we can neglect the absolute value. Furthermore, we have that for any $\lambda>0$
    \begin{align}
         &\p^{\bm\gamma}_N\left(\sup_{0\leq t<\delta} \left<\bm\mu_N(t), \bm G\right> - \left<\bm\mu_N(0), \bm G\right>  \geq \varepsilon \right)\nonumber
         \\=& \p^{\bm\gamma}_N\left(\sup_{0\leq t<\delta}\frac{1}{N}\log Z_{t,N}^{\lambda \bfG}(\bm\mu_N)  + \frac{1}{N} \int_0^te^{-\lambda N\langle\bm\mu_N(s), \bm G\rangle}\left(\partial_{s}+ N^2\mathcal{L}\right) e^{\lambda N\langle \bm\mu_N(s), \bm G\rangle}\dd s \geq \lambda \varepsilon \right)\nonumber\\
        \leq& \p^{\bm\gamma}_N\left(\sup_{0\leq t<\delta}\frac{1}{N}\log Z_{t,N}^{\lambda \bfG}(\bm\mu_N) \geq \tfrac{1}{2}\lambda \varepsilon \right) + \p^{\bm\gamma}_N\left( \sup_{0\leq t<\delta}\frac{1}{N} \int_0^te^{-\lambda N\langle\bm\mu_N(s), \bm G\rangle}\left(\partial_{s}+ N^2\mathcal{L}\right) e^{\lambda N\langle \bm\mu_N(s), \bm G\rangle}\dd s \geq \tfrac{1}{2}\lambda \varepsilon \right).
    \end{align}
    Note that by \eqref{taylor-approx-generator-exponential} and the fact that there is at most one particle per site,  
    \begin{equation}
        \frac{1}{N} \int_0^te^{-\lambda N\langle\bm\mu_N(s), \bm G\rangle}\left(\partial_{s}+ N^2\mathcal{L}\right) e^{\lambda N\langle \bm\mu_N(s), \bm G\rangle}\dd s  = \mathcal{O}(\delta).
    \end{equation}
    Furthermore, by Doob's martingale inequality
    \begin{equation}
    \p^{\bm\gamma}_N\left(\sup_{0\leq t<\delta}\frac{1}{N}\log Z_{t,N}^{\lambda \bfG}(\bm\mu_N)  \tfrac{1}{2}\lambda \varepsilon\right) =\p^{\bm\gamma}_N\left(\sup_{0\leq t<\delta} Z_{t,N}^{\lambda \bfG}(\bm\mu_N) \geq e^{\frac{1}{2}N\lambda \varepsilon}\right) \leq e^{-\tfrac{1}{2}N\lambda \varepsilon}, 
    \end{equation}
    where we used that $Z_{t,N}^{\lambda \bfG}(\bm\mu_N)$ is a mean 1 martingale. Therefore we find that 
    \begin{equation}
    \uplim_{\delta\to 0}\ \uplim_{N\to\infty} \frac{1}{N}\log  \p^{\bm\gamma}_N\left(\sup_{0\leq t<\delta}  \left<\bm\mu_N(t), \bm G\right> - \left<\bm\mu_N(0), \bm G\right>  \geq \varepsilon \right) = -\tfrac{1}{2}\lambda \varepsilon,
    \end{equation}
    and since we took $\lambda>0$ arbitrary this concludes the proof.
\end{proof}

With this Lemma we are able to prove the exponential tightness of the empirical distributions.
\begin{proof}[Proof of Theorem \ref{exponential tightness}]
Consider a countable uniformly dense family  $\{\bfH_k\}_{k\in\mathbb{N}}\subset C^2(\mathbb{T})\times C^2(\mathbb{T})$. Then, for each $\delta>0$, $\epsilon>0$ we define the following set 
\begin{equation}
\mathcal{C}_{k,\delta,\epsilon} = \left\{\bm \mu \in D([0,T], \mathcal{M}_1\times \mathcal{M}_1) ; \sup_{|t-s|\leq \delta} \left|\langle \bm \mu(t), \bfH_k\rangle - \langle \bm\mu(s), \bfH_k\rangle \right|\leq \epsilon \right\}.
\end{equation}
First of all, note that $\mathcal{C}_{k, \delta, \epsilon}$ is closed. Furthermore, by Lemma \ref{continuity} we know that we can find a $\delta=\delta(k,m,n)$ such that 
\begin{equation}
\p^{\bm\gamma}_N (\bm \mu_N \not \in \mathcal{C}_{k,\delta, 1/m}) \leq \exp(-Nnmk)
\end{equation}
for $N$ large enough. 
We then define 
\begin{equation}
\mathcal{K}_n = \bigcap_{k \geq 1, m\geq 1} \mathcal{C}_{k, \delta(k,m,n),1/m}.
\end{equation}
Then we find that 
\begin{equation}
\p^{\bm\gamma}_N (\bm \mu_N \not \in \mathcal{K}_n) \leq \sum_{k\geq 1, m\geq 1}\exp(-Nnmk) \leq C\exp(-Nn)
\end{equation}
where $C>0$ is some constant, and so 
\begin{equation}
\uplim_{N\to\infty} \frac{1}{N}\log \p^{\bm\gamma}_N (\bm \mu_N \not \in \mathcal{K}_n) \leq -n.
\end{equation}

Since $\mathcal{K}_n$ is closed, we now only have to show that $\mathcal{K}_n$ is relatively compact for every $n\in\mathbb{N}$, i.e., we need to show that the following two things holds \cite[Proposition 4.1.2]{KipnisLandim}
\begin{enumerate}
    \item \label{1}$\{\bm\mu(t) ; \bm\mu \in \mathcal{K}_n, t\in[0,T]\}$ is relatively compact in $\mathcal{M}_1\times \mathcal{M}_1$.
    \item \label{2}$\lim_{\delta \to 0} \sup_{\bm \mu \in \mathcal{K}_n} w_\delta(\bm\mu) = 0$ where 
    \begin{equation}
    w_\delta(\bm\mu) := \sup_{|t-s|\leq \delta} \sum_{k=1}^\infty \frac{1}{2^k} \ \frac{|\langle \bm\mu(t), \bfH_k\rangle - \langle \bm \mu(s), \bfH_k \rangle |}{1+|\langle \bm\mu(t), \bfH_k\rangle - \langle \bm \mu(s), \bfH_k \rangle |}=0.
    \end{equation}
\end{enumerate}
Note here that (\ref{1}) is satisfied since $\mathcal{M}_1\times \mathcal{M}_1$ itself is compact, and (\ref{2}) is satisfied by the definition of $\mathcal{K}_n$, hence $\mathcal{K}_n$ is compact.
\end{proof}

\subsection{Proof of the upper bound}\label{upper bound}

\begin{theorem}[Upper bound for compact sets]
    For any compact set $\mathcal{K}\subset D\left([0,T],\mathcal{M}_{1}\times \mathcal{M}_{1}\right)$ we have that 
    \begin{equation}
        \uplim_{N\to\infty}\frac{1}{N}\log \mathbb{P}_{N}^{\bm\gamma}\left(\bm\mu_N\in \mathcal{K}\right)\leq -\inf_{\bfrho\in \mathcal{K}}I_{\bm\gamma}(\bfrho).
    \end{equation}
\end{theorem}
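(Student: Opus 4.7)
The plan is to combine the exponential martingale $Z_{T,N}^{\bm H}(\bm\mu_N)$ from Section \ref{radon-Nikodym section} with an initial exponential tilt parametrized by $\bm\phi=(\phi_0,\phi_1,\phi_2)$, then apply the superexponential estimate to identify the exponent with $\mathcal{I}_{\bm\gamma}$, and finally conclude via a covering argument on the compact set $\mathcal{K}$.

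\textbf{Step 1: exponential change of measure.} For smooth $\bm H=(H_1,H_2)$ and continuous $\bm\phi$, define
\begin{align}
M^{\bm H,\bm\phi}_N(\bm\mu_N)=\frac{\exp\Bigl(\sum_{x,\alpha}\eta_\alpha^x(0)\,\phi_\alpha(\tfrac{x}{N})\Bigr)}{\prod_{x=1}^N\Bigl(\sum_{\alpha=0}^2\gamma_\alpha(\tfrac{x}{N})e^{\phi_\alpha(\tfrac{x}{N})}\Bigr)}\;Z^{\bm H}_{T,N}(\bm\mu_N).
\end{align}
A short computation using the product form of $\nu_N^{\bm\gamma}$ and the mean-one martingale property of $Z^{\bm H}_{T,N}$ under $\mathbb{P}_N^{\bm\gamma}$ gives $\mathbb{E}_N^{\bm\gamma}[M^{\bm H,\bm\phi}_N]=1$. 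Hence, for any measurable $A$, the Chebyshev-type inequality $\mathbb{P}_N^{\bm\gamma}(A)\le\sup_{\bm\mu\in A}\exp\bigl(-N\Psi_N^{\bm H,\bm\phi}(\bm\mu)\bigr)$ holds with $\Psi_N^{\bm H,\bm\phi}:=\tfrac{1}{N}\log M^{\bm H,\bm\phi}_N$.

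\textbf{Step 2: identifying the exponent.} Using \eqref{exponetial-martingale} together with the Taylor expansion \eqref{taylor-approx-generator-exponential}, the dynamical contribution to $\Psi_N^{\bm H,\bm\phi}$ equals
\begin{align}
\ell_N(\bm\mu_N;\bm H)-\frac{1}{N}\int_0^T\!\!\sum_{x=1}^N\Bigl[\eta_1^x(1-\eta_1^{x+1})(\nabla H_1)^2+\eta_2^x(1-\eta_2^{x+1})(\nabla H_2)^2-2\eta_1^x\eta_2^{x+1}\,\nabla H_1\nabla H_2\Bigr]\dd s+o(1),
\end{align}
where $\ell_N$ is the discrete analogue of $\ell$. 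By the superexponential estimate (applied to the three local functions $\eta_1^x(1-\eta_1^{x+1})$, $\eta_2^x(1-\eta_2^{x+1})$, and $\eta_1^x\eta_2^{x+1}$), the quadratic term converges superexponentially to $\tfrac12\|\bm H\|_{\mathcal{H}(\bm\mu_N)}^2$. Similarly, the initial contribution is exactly $h_{\bm\phi}(\bm\mu_N(0);\bm\gamma)+o(1)$ by definition of $h_{\bm\phi}$ and a Riemann sum argument. Hence, outside a superexponentially negligible event $\mathcal E_{\varepsilon,N}$, one has
\begin{align}
\Psi_N^{\bm H,\bm\phi}(\bm\mu_N)\;\ge\;h_{\bm\phi}(\bm\mu_N(0);\bm\gamma)+\ell(\bm\mu_N;\bm H)-\tfrac12\|\bm H\|_{\mathcal{H}(\bm\mu_N)}^2-\delta(\varepsilon).
\end{align}

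\textbf{Step 3: covering argument.} Fix $\bm H,\bm\phi$. The functional
$\bm\mu\mapsto\Phi^{\bm H,\bm\phi}(\bm\mu):=h_{\bm\phi}(\bm\mu(0);\bm\gamma)+\ell(\bm\mu;\bm H)-\tfrac12\|\bm H\|_{\mathcal{H}(\bm\mu)}^2$
is continuous on $D([0,T],\mathcal M_1\times\mathcal M_1)$. For each $\bm\rho\in\mathcal K$ choose $(\bm H^{\bm\rho},\bm\phi^{\bm\rho})$ with $\Phi^{\bm H^{\bm\rho},\bm\phi^{\bm\rho}}(\bm\rho)\ge\min\{\mathcal I_{\bm\gamma}(\bm\rho),1/\delta\}-\delta$, and an open neighborhood $U_{\bm\rho}$ on which $\Phi^{\bm H^{\bm\rho},\bm\phi^{\bm\rho}}$ varies by less than $\delta$. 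Extract a finite subcover $U_1,\dots,U_m$ of $\mathcal K$ with associated parameters $(\bm H^{(i)},\bm\phi^{(i)})$. Then, combining Steps 1--2 and the superexponential estimate,
\begin{align}
\frac{1}{N}\log\mathbb{P}_N^{\bm\gamma}(\bm\mu_N\in\mathcal K)\;\le\;\max_{1\le i\le m}\Bigl\{-\inf_{\bm\mu\in U_i}\Phi^{\bm H^{(i)},\bm\phi^{(i)}}(\bm\mu)+2\delta\Bigr\}+o_N(1),
\end{align}
and by the choices above the right-hand side is bounded above by $-\inf_{\bm\rho\in\mathcal K}\mathcal I_{\bm\gamma}(\bm\rho)+3\delta$ for large $N$. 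Letting first $N\to\infty$, then $\varepsilon\to 0$, and then $\delta\to 0$ yields the claim.

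\textbf{Main obstacle.} The delicate point is the passage via the superexponential estimate from the discrete quadratic correction $\sum_x\eta_\alpha^x(1-\eta_\beta^{x+1})(\nabla H_\alpha)(\nabla H_\beta)$ to the continuum norm $\tfrac12\|\bm H\|^2_{\mathcal{H}(\bm\rho)}$: the functions $\eta^x_\alpha \eta^{x+1}_\beta$ are \emph{not} of the form $\tau_y\phi$ used in \eqref{definition-V} for a single local $\phi$, so one must first introduce block-average approximations indexed by an auxiliary scale $\epsilon N$, apply Theorem~\ref{superexponential estimate} to each of the three local functions separately, and only then identify the resulting expression with the Hilbert norm $\|\cdot\|_{\mathcal{H}(\bm\rho)}$. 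Exponential tightness (Theorem \ref{exponential tightness}) was already established to allow the restriction to compact sets; the remaining work is the careful bookkeeping of these replacements together with the continuity and minimax step on the covering.
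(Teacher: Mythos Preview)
Your proof is correct and follows essentially the same route as the paper: combine the mean-one exponential martingale with a static tilt, use the superexponential estimate (with the $q_\epsilon$ block-averaging) to replace the quadratic correction by $\tfrac12\|\bm H\|_{\mathcal H(\bm\rho\ast q_\epsilon)}^2$, and then optimize over $(\bm H,\bm\phi)$ on the compact set. The only cosmetic difference is that the paper invokes Varadhan's minimax lemma (Lemma~11.3 of \cite{varadhan1984large}) to swap the $\sup_{\bm H,\bm\phi}$ and $\inf_{\bm\rho\in\mathcal K}$, whereas you spell this out as a finite-cover argument; note however that your continuity claim for $\Phi^{\bm H,\bm\phi}$ in Step~3 only holds for the $q_\epsilon$-smoothed functional (since $\|\bm H\|_{\mathcal H(\bm\mu)}^2$ involves products of densities), so the covering should be done at fixed $\epsilon>0$ before sending $\epsilon\to0$, exactly as the paper does.
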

\begin{proof}
For any given $G_{1},G_{2}\in C^{2,1}(\mathbb{T}\times [0,T])$, and $\epsilon>0$, $\delta>0$, we introduce the following
\begin{align}
    B_{\epsilon,N,G_{\alpha},G_{\beta}}^{\delta,\alpha,\beta}:=&\left\{\bm{\eta}(t),0\leq t\leq T\,:\,\left|\frac{1}{N}\sum_{x=1}^{N}\int_{0}^{T}\nabla G_{\alpha}\left(\tfrac{x}{N},s\right)\nabla G_{\beta}\left(\tfrac{x}{N},s\right)\right.\right.\nonumber
    \\\cdot&\left.\left.\left(\eta_{\alpha}^{x}(s)\eta_{\beta}^{x+1}(s)-\left[\frac{1}{2\epsilon N+1}\sum_{|y-x|  \leq N\epsilon}\eta_{\alpha}^{y}\right]\left[\frac{1}{2\epsilon N+1}\sum_{|y-x|\leq N\epsilon}\eta_{\beta}^{y}\right]\right)\dd s\right|\leq \delta \right\}\,.
\end{align}
Moreover, we denote by 
\begin{equation}
    B_{\epsilon,N,\bm{G}}^{\delta}=\cap_{\alpha,\beta=1}^{2}B_{\epsilon,N,G_{\alpha},G_{\beta}}^{\delta,\alpha,\beta}\,.
\end{equation}
%\begin{comment}
%\textcolor{red}{We have that
%\begin{align}
%   \mathbb{P}\left(\left\{\bm\mu_N\in \mathcal{K}\right\}\right) \leq\mathbb{P}_{N}^{\bm\gamma}\left(\left\{\bm\mu_N\in %\mathcal{K}\right\}\;\cap\; B_{\epsilon,N,\bm{G}}^{\delta}\right)+\mathbb{P}_{N}^{\bm\gamma}\left( %%(B_{\epsilon,N,\bm{G}}^{\delta})^{C}\right)
%\end{align}
%then, by the superexponential estimate we have that 
%\begin{equation}
 %   \uplim_{\delta\to 0}\;\uplim_{N\to\infty}\frac{1}{N}\log\mathbb{P}_{N}^{\bm\gamma}\left( %(B_{\epsilon,N,\bm{G}}^{\delta})^{C}\right)=-\infty
%\end{equation}}
%\end{comment}
By the superexponential estimate, we then have that 
\begin{align}\label{estimate-fake-Martingale}
 %{\color{red}\uplim_{\delta\to 0}}\;
 \uplim_{N\to\infty}\frac{1}{N}\log\mathbb{P}_{N}^{\bm\gamma}\left(\left\{\bm\mu_N\in \mathcal{K}\right\}\right)\leq \uplim_{\delta\to 0}\;\uplim_{N\to\infty}\frac{1}{N}\log\mathbb{P}_{N}^{\bm\gamma}\left(\left\{\bm\mu_N\in \mathcal{K}\right\}\;\cap\; B_{\epsilon,N,\bm{G}}^{\delta}\right)\,.
 \end{align}
 We now define $q_\epsilon := \frac{1}{2\epsilon}\mathbbm{1}_{\{[-\epsilon,+\epsilon]\}}$ and we introduce the following
 \begin{align}
     \widetilde{Z}_{T,N}^{\bm{G}}(\bm\mu_N \ast \bm q_\epsilon):=
     &\exp{\Big\{ N\langle \left(\bm\mu_N(T)\ast \bm q_{\epsilon}\right),\bfG(\cdot,T)\rangle-N\langle \left(\bm\mu_N(0)\ast \bm q_{\epsilon}\right),\bfG(\cdot,0)\rangle \Big\}}\nonumber
     \\\cdot&
     \exp{\left\{- N\int_0^T \langle (\bm\mu_N(t)\ast \bm q_\epsilon), (\partial_s + \Delta)\bfG(\cdot, t)\rangle \dd t\right\}}\nonumber
     \\\cdot&
     \exp{\left\{- N\int_{0}^{T}\langle \left(\mu_{1,N}(t)\ast q_{\epsilon}\right)\left(1-\left(\mu_{1,N}(t)\ast q_{\epsilon}\right)\right), \left(\nabla G_{1}(\cdot,t)\right)^{2}\rangle \dd t\right\}}\nonumber
     \\\cdot&
     \exp{\left\{- N\int_{0}^{T}\langle \left(\mu_{2,N}(t)\ast q_{\epsilon}\right)\left(1-\left(\mu_{2,N}(t)\ast q_{\epsilon}\right)\right), \left(\nabla G_{2}(\cdot,t)\right)^{2}\rangle \dd t\right\}}\nonumber
     \\\cdot &
     \exp{\left\{- 2N\int_{0}^{t}\langle\left(\mu_{1,N}(t)\ast q_{\epsilon}\right)\left(\mu_{2,N}(t)\ast q_{\epsilon}\right),\nabla G_{1}(\cdot,t)\nabla G_{2}(\cdot,t) \rangle \dd t\right\}}\,.
 \end{align}
By this definition and by using the exponential martingale, for all $N$ and for all $\{\bm{\eta}(t),0\leq t\leq T\} \in B_{\epsilon,N,\bm{G}}^{\delta}$ we have that, 
\begin{equation}
    \widetilde{Z}_{T,N}^{\bm{G}}(\bm\mu_N\ast \bm q_\epsilon)\leq Z_{T,N}^{\bm{G}}(\bm \mu_N) \exp{\left\{N(c(\epsilon)+\delta)\right\}}.
\end{equation}
Using \eqref{estimate-fake-Martingale} and recalling the definition of $h_{\bm\phi}$ in \eqref{static} we then find that 
\begin{align}
    &\uplim_{N\to\infty}\log \mathbb{P}_{N}^{\bm\gamma}\left(\left\{\bm\mu_N\in \mathcal{K}\right\}\;\cap\; B_{\epsilon,N,\bm{G}}^{\delta}\right)\nonumber\\
    &\qquad \qquad \qquad =\uplim_{N\to\infty}\frac{1}{N}\log \mathbb{E}_{\nu^{\bm\gamma}_N}\left[\mathbbm{1}_{\left\{\left\{\bm\mu_N\in \mathcal{K}\right\}\;\cap\; B_{\epsilon,N,\bm{G}}^{\delta}\right\}} \frac{\widetilde{Z}_{T,N}^{\bm{G}}(\bm\mu_N \ast \bm q_\epsilon)}{\widetilde{Z}_{T,N}^{\bm{G}}(\bm\mu_N \ast \bm q_\epsilon)}\cdot \frac{e^{Nh_{\bm\phi}(\bm\mu_N(0);\bm\gamma)}}{e^{Nh_{\bm\phi}(\bm\mu_N(0);\bm\gamma)}}\right]\nonumber
    \\&\qquad \qquad \qquad\leq
    \uplim_{N\to\infty}\frac{1}{N}\log{\mathbb{E}_{\nu^{\bm\gamma}_N}\left[Z_{T,N}^{\bm{G}}(\bm \mu_N)\cdot e^{Nh_{\bm\phi}(\bm\mu_N(0);\bm\gamma)}\right]} + c(\epsilon) + \delta \nonumber
    \\ &\qquad \qquad \qquad- 
    \inf_{\bfrho\in \mathcal{K} }\left\{ h_{\bm\phi}(\bfrho(0);\bm\gamma) + \ell(\bfrho\ast\bm q_{\epsilon};\bfG) - ||\bfG||_{\mathcal{H}(\bfrho\ast \bm q_\epsilon)}\right\}.
\end{align}
Since $Z_{T,N}^{\bm{G}}(\bm \mu_N)$ is a martingale with $Z_{0,N}^{\bm{G}}(\bm \mu_N) = 1$
\begin{equation}
     \mathbb{E}_{\nu^{\bm\gamma}_N}\left[Z_{T,N}^{\bm{G}}(\bm \mu_N)e^{Nh_{\bm\phi}(\bm\mu_N(0);\gamma)}\right]= \mathbb{E}_{\nu^{\bm\gamma}_N}\left[e^{Nh_{\bm\phi}(\bm\mu_N(0);\gamma)}\right] =1\,.
\end{equation}
By taking the limsup for $\epsilon\to 0$ and $\delta\to0$, by optimizing over $\bm{G}$ and over $\bm{\phi}$ and by exchanging the supremum and the infimum (by using the argument of Lemma 11.3 of \cite{varadhan1984large}) we obtain that 
\begin{align}
    \uplim_{\epsilon\to 0}\;\uplim_{\delta \to 0}\;\uplim_{N\to\infty}\frac{1}{N}\log \mathbb{P}_{N}^{\bm\gamma}\left(\left\{\bm\mu_N\in \mathcal{K}\right\}\;\cap\; B_{\epsilon,N,\bm{G}}^{\delta}\right)\leq -\inf_{\bm\rho\in \mathcal{K}}\mathcal{I}_{\bm\gamma}(\bm\rho),
\end{align}
then the Theorem follows. 
\end{proof}

\subsection{Proof of the lower bound}\label{lower bound}
\begin{lemma}\label{explicit static}
    Assume that $h(\bfrho(0);\bm\gamma)<\infty$, then there exists a density $\bm\omega :=\frac{\dd\bfrho(0)}{\dd \lambda}$, with $\lambda$ the Lebesgue measure, and 
    \begin{equation}
    h(\bfrho(0);\bm\gamma) =  \lim_{N\to\infty} \frac{1}{N}\E_{\nu^{\bm\omega}_N}\left[\log\left(\frac{\dd \nu^{\bm\omega}_N}{\dd \nu^{\bm\gamma}_N}\right)\right].
    \end{equation}
\end{lemma}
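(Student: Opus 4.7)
The first task is to show that $h(\bfrho(0);\bm\gamma)<\infty$ forces $\bfrho(0)$ to be absolutely continuous with respect to Lebesgue measure, so that the density $\bm\omega$ exists. I would argue by contraposition: assume some $\rho_\alpha(0)$ has a nontrivial singular part $\mu^{s}_\alpha$, and test $h_{\bm\phi}$ with $\phi_\alpha = c\,\psi_\epsilon$, where $\psi_\epsilon$ is a smooth approximation to the indicator of a neighborhood $U_\epsilon$ of $\mathrm{supp}(\mu^s_\alpha)$, and $\phi_\beta\equiv 0$ for $\beta\neq\alpha$. The linear term $\langle\rho_\alpha(0),\phi_\alpha\rangle \ge c\,\mu^s_\alpha(U_\epsilon)$ stays bounded below, whereas the bound $\log(\sum_\beta \gamma_\beta e^{\phi_\beta}) \le \log 2 + c\psi_\epsilon^{+}$ gives a contribution of order $c\,\lambda(U_\epsilon)\to 0$ as $\epsilon\to 0$. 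Sending then $c\to\infty$ forces $h=\infty$, a contradiction.

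Given the existence of $\bm\omega$, the second step is a direct calculation of the right-hand side. Because $\nu^{\bm\omega}_N$ and $\nu^{\bm\gamma}_N$ are both products of multinomials with site-dependent parameters,
\[
\log\frac{\dd\nu^{\bm\omega}_N}{\dd\nu^{\bm\gamma}_N}(\bm\eta)=\sum_{x=1}^{N}\sum_{\alpha=0}^{2}\eta^{x}_\alpha\,\log\frac{\omega_\alpha(x/N)}{\gamma_\alpha(x/N)},
\]
and taking expectation under $\nu^{\bm\omega}_N$ (which gives $\E[\eta^x_\alpha]=\omega_\alpha(x/N)$) yields
\[
\frac{1}{N}\E_{\nu^{\bm\omega}_N}\Bigl[\log\tfrac{\dd\nu^{\bm\omega}_N}{\dd\nu^{\bm\gamma}_N}\Bigr]=\frac{1}{N}\sum_{x=1}^{N}\sum_{\alpha=0}^{2}\omega_\alpha(x/N)\log\frac{\omega_\alpha(x/N)}{\gamma_\alpha(x/N)}.
\]
Since $\bm\gamma$ is smooth and bounded away from $0$ and the integrand is bounded, this Riemann sum converges to $H(\bm\omega\|\bm\gamma):=\int_{\mathbb{T}}\sum_{\alpha}\omega_\alpha(u)\log(\omega_\alpha(u)/\gamma_\alpha(u))\,\dd u$.

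The remaining task is to prove the identity $h(\bfrho(0);\bm\gamma)=H(\bm\omega\|\bm\gamma)$ via the Gibbs variational principle. Pointwise, for each $u$ the three-variable optimization
\[
\sup_{\bm z\in\mathbb{R}^{3}}\Bigl\{\sum_{\alpha}\omega_\alpha(u)\,z_\alpha-\log\sum_{\alpha}\gamma_\alpha(u)e^{z_\alpha}\Bigr\}=\sum_{\alpha}\omega_\alpha(u)\log\frac{\omega_\alpha(u)}{\gamma_\alpha(u)}
\]
is classical, with optimizer $z^{*}_\alpha(u)=\log(\omega_\alpha(u)/\gamma_\alpha(u))$. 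Integrating and applying Fubini gives the upper bound $h\le H$. For the matching lower bound I would plug in $\phi^{*}_\alpha=\log(\omega_\alpha/\gamma_\alpha)$, which is only measurable; I would first truncate to $\phi^{*,M}_\alpha=(-M)\vee\phi^{*}_\alpha\wedge M$ and then mollify by convolution with a smooth kernel to produce continuous admissible test functions $\phi^{*,M,\delta}_\alpha$. Boundedness of the truncated integrands allows dominated convergence to recover $h_{\phi^{*,M,\delta}}\to H(\bm\omega\|\bm\gamma)$ as $\delta\to 0$ and then $M\to\infty$.

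The main obstacle is this last approximation, specifically on the null sets where $\omega_\alpha=0$ (where $\phi^{*}_\alpha=-\infty$) and on points where $\bm\omega$ is discontinuous. The truncation at level $M$ controls the $-\infty$ values and makes the log-moment term well-defined; the mollification is then safe because $\gamma_\alpha$ is smooth and uniformly positive. A minor technical issue is also the convergence of the Riemann sum for merely Riemann-integrable $\bm\omega$, which can be handled by first proving the lemma for continuous profiles and then extending by the lower semicontinuity of both sides in $\bm\omega$.
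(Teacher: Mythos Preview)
Your proposal is correct and follows the same route as the paper: rule out a singular part by showing the supremum in \eqref{static} blows up, then identify $h$ with the integrated relative entropy $\int_{\mathbb T}\sum_\alpha\omega_\alpha\log(\omega_\alpha/\gamma_\alpha)\,\dd u$, which is the Riemann-sum limit of the normalized entropies. The paper's own proof is much terser---it simply asserts the variational equality without the truncation/mollification argument you sketch---and in fact your treatment of absolute continuity is sharper: the paper claims that non-absolute-continuity forces an atom $\rho_\alpha(0)(\{u\})>0$, which overlooks singular continuous parts, whereas your $\epsilon$-neighborhood construction handles the general case.
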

\begin{proof}
    If $\bfrho(0)$ is not absolutely continuous with respect to the Lebesgue measure, then there exists a $u\in\mathbb{T}$ and $\alpha \in \{1,2\}$ such that $\rho_\alpha(0)(\{u\}) >0$. Then, by choosing a sequence $(\phi_{\alpha,k})_{k\in\mathbb{N}}$ such that $\phi_{\alpha,k}(u)\to\infty$, one can show that $h(\bfrho(0);\bm\gamma) = \infty$. Hence if $h(\bfrho(0);\bm\gamma)<\infty$ we have that $\bm\omega$ exists. The rest of a proof is just a calculation.
    \begin{align*}
    h(\bfrho(0);\bm\gamma) 
    &= \sup_{\bm\phi} \left\{ \sum_{\alpha=0}^2 \langle \omega_\alpha,\phi_\alpha\rangle - \int_{\mathbb{T}}\log\left(\sum_{\alpha=0}^2 \gamma_\alpha(u) e^{\phi_\alpha(u)}\right)\dd u \right\} \\
    &= \sum_{\alpha=0}^2\langle \omega_\alpha,\log\left(\frac{\omega_\alpha}{\gamma_\alpha}\right)\rangle \\
    &=\lim_{N\to\infty} \frac{1}{N}\E_{\nu^{\bm\omega}_N}\left[\log\left(\frac{\dd \nu^{\bm\omega}_N}{\dd \nu^{\bm\gamma}_N}\right)\right] .
    \end{align*}
\end{proof}
\begin{lemma}\label{lemma51}
    Assume that $\mathcal{I}_{0}(\bfrho)<\infty$, then there exists an $\bfH\in \mathcal{H}(\bfrho)$ such that  for all $\bfG$ we have that 
\begin{align}\label{weakSolution-Riesz}
    \ell(\bfrho;\bfG)=\left<\bfG,\bfH\right>_{\mathcal{H}(\bf\rho)}.
\end{align}
Moreover, the following holds
\begin{align}
    \mathcal{I}_{0}(\bfrho)=\tfrac{1}{2}||\bfH||_{\mathcal{H}(\bfrho)}^2.
\end{align}
\end{lemma}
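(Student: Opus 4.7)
The plan is to recognize this statement as a Riesz representation result. The bilinear form $\langle\cdot,\cdot\rangle_{\mathcal{H}(\bfrho)}$ defined in \eqref{scalar-prduct-H} is a positive semi-definite symmetric form on smooth test functions, and the hypothesis $\calI_0(\bfrho)<\infty$ forces $\ell(\bfrho;\cdot)$ to be bounded with respect to its induced seminorm. The result then follows from the Riesz representation theorem on a suitable Hilbert completion, combined with the elementary identity $\sup_{x}\{\langle x,y\rangle-\tfrac{1}{2}\|x\|^2\}=\tfrac{1}{2}\|y\|^2$.

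First I would verify positivity of the form. For $\bfG=\bfH$, the pointwise integrand in \eqref{scalar-prduct-H} equals $2\,\nabla\bfG(u,t)^{\mathsf T}\chi(\rho_1(u,t),\rho_2(u,t))\nabla\bfG(u,t)$, where $\chi$ is the mobility matrix \eqref{mobility-matrix}. Since $\chi$ is the covariance matrix of a multinomial random variable, it is positive semi-definite, so $\|\bfG\|^2_{\mathcal{H}(\bfrho)}\geq 0$. One then quotients out the seminorm null-space and completes the resulting pre-Hilbert space to obtain $\mathcal{H}(\bfrho)$. Next, from the variational definition \eqref{I-zero}, for every test vector $\bfG$ and every $\lambda\in\mathbb{R}$,
\begin{equation*}
\lambda\,\ell(\bfrho;\bfG)-\tfrac{1}{2}\lambda^{2}\|\bfG\|^{2}_{\mathcal{H}(\bfrho)}\leq \calI_{0}(\bfrho)<\infty.
\end{equation*}
If $\|\bfG\|_{\mathcal{H}(\bfrho)}=0$, taking $|\lambda|\to\infty$ forces $\ell(\bfrho;\bfG)=0$, so $\ell(\bfrho;\cdot)$ descends to the quotient. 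Otherwise, optimizing over $\lambda$ yields
\begin{equation*}
|\ell(\bfrho;\bfG)|\leq \sqrt{2\calI_{0}(\bfrho)}\,\|\bfG\|_{\mathcal{H}(\bfrho)},
\end{equation*}
so that $\ell(\bfrho;\cdot)$ extends by continuity to a bounded linear functional on $\mathcal{H}(\bfrho)$. Riesz representation then produces a unique $\bfH\in\mathcal{H}(\bfrho)$ with $\ell(\bfrho;\bfG)=\langle\bfG,\bfH\rangle_{\mathcal{H}(\bfrho)}$ for all smooth $\bfG$. Substituting into \eqref{I-zero} and using Hilbert space maximization on a dense subspace of test functions yields $\calI_{0}(\bfrho)=\tfrac{1}{2}\|\bfH\|^{2}_{\mathcal{H}(\bfrho)}$.

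The main technical obstacle is handling the degeneracy of the inner product where $\chi(\rho_{1},\rho_{2})$ has a nontrivial kernel (for instance, on regions where $\rho_{\alpha}\in\{0,1\}$ has positive measure in $\mathbb{T}\times[0,T]$). There, the seminorm annihilates gradients of a nontrivial space of test functions, so one must pass to the quotient, and the extension of $\ell$ to $\mathcal{H}(\bfrho)$ is justified precisely by the vanishing of $\ell$ on the null-space, which is guaranteed by the boundedness argument above. A secondary point is that the optimization defining $\calI_0$ ranges a priori only over smooth test functions, not the full Hilbert space, so one must invoke density of smooth test functions in the completion together with joint continuity of $\bfG\mapsto \langle\bfG,\bfH\rangle_{\mathcal{H}(\bfrho)}-\tfrac{1}{2}\|\bfG\|^2_{\mathcal{H}(\bfrho)}$ to justify that the smooth sup and the Hilbert-space sup coincide.
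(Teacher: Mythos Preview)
Your proposal is correct and follows essentially the same route as the paper: bound $\ell(\bfrho;\cdot)$ in the $\mathcal{H}(\bfrho)$-seminorm by optimizing $\lambda\,\ell(\bfrho;\bfG)-\tfrac{1}{2}\lambda^{2}\|\bfG\|_{\mathcal{H}(\bfrho)}^{2}\leq \calI_{0}(\bfrho)$ over $\lambda$, invoke Riesz representation, and then complete the square to obtain $\calI_{0}(\bfrho)=\tfrac{1}{2}\|\bfH\|_{\mathcal{H}(\bfrho)}^{2}$. You are in fact more explicit than the paper about the positivity of the form (via the mobility matrix $\chi$), the passage to the quotient by the seminorm null-space, and the density argument needed to pass from the smooth supremum to the Hilbert-space supremum; the paper's proof is terse and leaves these points implicit.
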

\begin{remark}
    Observe that if \eqref{weakSolution-Riesz} holds for all $\bfG$, then $\bfrho$ satisfies the equations $\eqref{HD-equations LDP}$ in the sense of distributions. This will be used in the proof of the large deviation lower bound. Indeed, by choosing a non-typical trajectory, we can find an $\bm{H}$ that makes it typical, i.e. that makes it solve the weakly asymmetric  hydrodynamic equations.
\end{remark}
\begin{proof}
By definition, we have that 
    \begin{align}
        \mathcal{I}_0(\bfrho)\geq \lambda \ell(\bfrho;\bfG) - \frac{1}{2}\lambda^2 ||\bfG||_{\mathcal{H}(\bfrho)}^2
    \end{align}
    for any $\lambda>0$. 
    Optimizing over $\lambda$ we have that 
    \begin{align}
        \lambda^{*}=\frac{\ell(\bfrho;\bfG)}{||\bfG||_{\mathcal{H}(\bfrho)}^2},
    \end{align}
    and so  
    \begin{align}
        \ell(\bfrho;\bfG)^2\leq 2 \mathcal{I}_{0}(\bfrho)||\bfG||_{\mathcal{H}(\bfrho)}^2.
    \end{align}
    This means that the linear functional $\ell(\bfrho;\cdot)$  is bounded in the Hilbert space $\mathcal{H}(\bfrho)$ and so, by the Riesz representation Theorem, there exists an $\bfH \in \mathcal{H}(\bfrho)$ such that  \eqref{weakSolution-Riesz} holds for all $\bfG$.

Using this, we find that 
\begin{align}\label{norm rate function}
    \mathcal{I}_{0}(\bfrho)
    &=\sup_{\bfG}\left\{\langle\bm{G},\bm{H}\rangle_{\mathcal{H}(\bfrho)}-\tfrac{1}{2}||\bm{G}||_{\mathcal{H}(\bfrho)}^2\right\}\nonumber\\
     &=\sup_{\bfG}\left\{\tfrac{1}{2}||\bfH||_{\mathcal{H}(\bfrho)}^2-\tfrac{1}{2}||\bm{H}-\bm{G}||_{\mathcal{H}(\bfrho)}^2\right\}\nonumber\\\
    &=\tfrac{1}{2}||\bm{H}||_{\mathcal{H}(\bfrho)}^2,
    \end{align}
    which concludes the proof.
\end{proof}
\begin{theorem}\label{them-neighb}
   Fix $\bfrho \in D\left([0,T],\mathcal{M}_{1}\times \mathcal{M}_{1}\right) $, then for any open neighborhood $\mathcal{O}$ around $\bfrho$ we have that  
    \begin{align*}
        \lowlim_{N\to \infty}\;\frac{1}{N}\log \mathbb{P}_{N}^{\bm\gamma}\left(\bm\mu_N\in \mathcal{O}\right)\geq -\mathcal{I}_{\bm\gamma}(\bfrho)
    \end{align*}
\end{theorem}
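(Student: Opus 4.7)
I prove the bound by a change-of-measure argument analogous to \cite{KOV}. Assume $\calI_{\bm\gamma}(\bfrho)<\infty$, otherwise the claim is trivial. By Lemma \ref{explicit static}, $\bfrho(0)$ then admits a density $\bm\omega$ with respect to Lebesgue measure; by Lemma \ref{lemma51} there exists $\bfH\in\mathcal{H}(\bfrho)$ with $\ell(\bfrho;\bfG)=\langle\bfG,\bfH\rangle_{\mathcal{H}(\bfrho)}$ for every smooth $\bfG$ and $\calI_0(\bfrho)=\tfrac12\|\bfH\|_{\mathcal{H}(\bfrho)}^2$. This $\bfH$ is precisely the driving field for which $\bfrho$ is the unique weak solution of \eqref{HD-equations LDP}. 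The candidate tilted measure is therefore $\p_N^{\bm\omega,\bfH}$, the law of the weakly asymmetric stirring process with potential $\bfH$ started from $\nu_N^{\bm\omega}$. By Theorem \ref{hydro wasep}, the empirical density $\bm\mu_N$ converges in probability under $\p_N^{\bm\omega,\bfH}$ to $\bfrho$, so $\p_N^{\bm\omega,\bfH}(\bm\mu_N\in\mathcal{O})\to 1$.

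\textbf{Change of measure and bound.} I next construct a sub-event $B_N\subset\{\bm\mu_N\in\mathcal{O}\}$ with $\p_N^{\bm\omega,\bfH}(B_N)\to 1$ on which
\begin{equation*}
\tfrac{1}{N}\log\tfrac{\dd\nu_N^{\bm\omega}}{\dd\nu_N^{\bm\gamma}}(\bm\eta(0))=h(\bfrho(0);\bm\gamma)+o(1),\qquad \tfrac{1}{N}\log Z_{T,N}^{\bfH}(\bm\mu_N)=\calI_0(\bfrho)+o(1).
\end{equation*}
The first estimate is a weak law of large numbers combined with Lemma \ref{explicit static}. The second follows by inserting \eqref{taylor-approx-generator-exponential} into \eqref{exponetial-martingale}, using the hydrodynamic convergence $\bm\mu_N\to\bfrho$ together with Theorem \ref{superexponential estimate} to replace the local products $\eta_\alpha^x\eta_\beta^{x+1}$ by products of local empirical densities, and computing the limit as $\ell(\bfrho;\bfH)-\tfrac12\|\bfH\|_{\mathcal{H}(\bfrho)}^2$, which by Lemma \ref{lemma51} equals $\calI_0(\bfrho)$. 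The change of measure then gives
\begin{equation*}
\p_N^{\bm\gamma}(\bm\mu_N\in\mathcal{O})\;\geq\;\p_N^{\bm\gamma}(B_N)\;=\;\E_N^{\bm\omega,\bfH}\!\left[\mathbbm{1}_{B_N}\tfrac{\dd\nu_N^{\bm\gamma}}{\dd\nu_N^{\bm\omega}}(\bm\eta(0))\bigl(Z_{T,N}^{\bfH}\bigr)^{-1}\right]\;\geq\;\p_N^{\bm\omega,\bfH}(B_N)\,e^{-N\calI_{\bm\gamma}(\bfrho)-o(N)},
\end{equation*}
and taking $\tfrac{1}{N}\log$ with $N\to\infty$ yields $\lowlim_{N\to\infty}\tfrac{1}{N}\log\p_N^{\bm\gamma}(\bm\mu_N\in\mathcal{O})\geq -\calI_{\bm\gamma}(\bfrho)$.

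\textbf{Main obstacle.} The principal difficulty is that Lemma \ref{lemma51} only furnishes $\bfH$ as an abstract element of $\mathcal{H}(\bfrho)$, whereas both the Girsanov identity \eqref{exponetial-martingale} and Theorem \ref{hydro wasep} require $\bfH\in C^{2,1}(\mathbb{T}\times[0,T])$; similarly $\bm\omega$ need not be smooth, while Theorem \ref{hydro wasep} assumes a smooth initial profile. The remedy is the standard approximation step: approximate $\bfH$ in $\mathcal{H}(\bfrho)$-norm by smooth $\bfH_\varepsilon$ and $\bm\omega$ in $L^1$ by smooth $\bm\omega_\varepsilon$; let $\bfrho_\varepsilon$ be the weak solution of \eqref{HD-equations LDP} driven by $\bfH_\varepsilon$ with initial datum $\bm\omega_\varepsilon$. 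Stability of the PDE gives $\bfrho_\varepsilon\to\bfrho$ in the Skorokhod topology with $\calI_{\bm\gamma}(\bfrho_\varepsilon)\to\calI_{\bm\gamma}(\bfrho)$, so for $\varepsilon$ small enough $\bfrho_\varepsilon\in\mathcal{O}$, the smooth argument above yields the lower bound $-\calI_{\bm\gamma}(\bfrho_\varepsilon)$, and letting $\varepsilon\to 0$ finishes the proof.
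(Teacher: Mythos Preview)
Your proposal is correct and follows essentially the same change-of-measure strategy as the paper: tilt to $\p_N^{\bm\omega,\bfH}$, use the hydrodynamic limit of the weakly asymmetric process to make $\bfrho$ typical, and identify the asymptotic cost via Lemmas~\ref{explicit static} and~\ref{lemma51}. The only technical difference is that the paper, instead of constructing a good event $B_N$ on which $\tfrac{1}{N}\log$ of the Radon--Nikodym derivative is pointwise close to $\calI_{\bm\gamma}(\bfrho)$, applies Jensen's inequality (after conditioning on $\{\bm\mu_N\in\mathcal{O}\}$) directly to $\E_N^{\bm\omega,\bfH}\bigl[\log\tfrac{\dd\p_N^{\bm\gamma}}{\dd\p_N^{\bm\omega,\bfH}}\bigr]$ and then invokes the hydrodynamic limit to evaluate the expectation. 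Both routes are standard and equivalent here.

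Your final paragraph on the smoothness obstacle is in fact \emph{more} careful than the paper's own proof, which silently applies Theorem~\ref{hydro wasep} and the Girsanov formula \eqref{exponetial-martingale} to the abstract $\bfH\in\mathcal{H}(\bfrho)$ without discussing the approximation by smooth potentials and profiles. The $\calI$-density argument you outline (smooth $\bfH_\varepsilon,\bm\omega_\varepsilon$, PDE stability, lower semicontinuity) is the correct way to close this gap and is the standard device in the KOV framework; the paper omits it.
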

\begin{proof}
If $\calI_{\bm\gamma}(\bfrho)=\infty$ then the result is immediate, hence we can assume that $\calI_{\bm\gamma}(\bfrho)<\infty$. Therefore, by Lemma \ref{lemma51}, there exists an $\bfH\in\mathcal{H}(\bfrho)$ such that $\ref{HD-equations LDP}$ holds weakly. Fix this $\bfH$ and recall that $\dd\bm\rho(0) = \bm\omega \dd\lambda$, then we denote
\[
\frac{\dd\p^{\bm\omega,\bfH}_{N}}{\dd\p^{\bm\gamma}_N} = \frac{\dd\nu^{\bm\omega}_N}{\dd\nu^{\bm\gamma}_N} \frac{\dd\p^{\bm\gamma,\bfH}_N}{\dd\p^{\bm\gamma}_N}.
\]
We then have that  
\begin{align}
    \mathbb{P}_{N}^{\bm{\gamma}}\left(\bm\mu_N\in \mathcal{O}\right)
    =\E^{\bm\gamma}_{N}\left[\mathbbm{1}_{\{\bm\mu_N\in \mathcal{O}\}}\frac{\dd\p^{\bm\omega,\bfH}_{N}}{\dd\p^{\bm\gamma}_N}\frac{\dd\p^{\bm\gamma}_{N}}{\dd\p^{\bm\omega,\bfH}_N}\right]
    =\E^{\bm\omega,\bfH}_{N}\left[\mathbbm{1}_{\{\bm\mu_N\in \mathcal{O}\}}\frac{\dd\p^{\bm\gamma}_{N}}{\dd\p^{\bm\omega,\bfH}_N}\right].
\end{align}
From Theorem \ref{hydro wasep} it follows that $\bfrho$ is  the typical trajectory of the new dynamics,  and so
\begin{align}\label{typical-path}
    \lim_{N\to \infty}\p^{\bm\omega,\bfH}_{N}\left(\bm\mu_N\in \mathcal{O}\right)=1.
\end{align}
\begin{comment}
\textcolor{red}{
    We now show that
\begin{align}\label{needed}
    \lowlim_{N\to\infty}\frac{1}{N}\log \mathbb{E}\left[\mathbbm{1}_{\{\bm\mu_N\in \mathcal{O}\}}\frac{\dd\p^{\bm\gamma}_{N}}{\dd\p^{\bm\omega,\bfH}_N}\right]=\lowlim_{N\to\infty}\frac{1}{N}\log \mathbb{E}\left[\frac{\dd\p^{\bm\gamma}_{N}}{\dd\p^{\bm\omega,\bfH}_N}\right]
\end{align}
Indeed we have that 
\begin{align}
    \lowlim_{N\to\infty}\frac{1}{N}\log \mathbb{E}\left[\frac{\dd\p^{\bm\gamma}_{N}}{\dd\p^{\bm\omega,\bfH}_N}\right]&=\lowlim_{N\to\infty}\frac{1}{N}\log \left(\mathbb{E}\left[\mathbbm{1}_{\{\bm\mu_N\in \mathcal{O}\}}\frac{\dd\p^{\bm\gamma}_{N}}{\dd\p^{\bm\omega,\bfH}_N}\right] + \mathbb{E}\left[\mathbbm{1}_{\{\bm\mu_N\in \mathcal{O}^{C}\}}\frac{\dd\p^{\bm\gamma}_{N}}{\dd\p^{\bm\omega,\bfH}_N}\right]\right)
\end{align}
 By using \eqref{typical-path} and by the fact that the Radon-Nikodym derivative is bounded by $1$ we have that 
 \begin{align}
     \lowlim_{N\to\infty}\frac{1}{N}\log \mathbb{E}\left[\mathbbm{1}_{\{\bm\mu_N\in \mathcal{O}^{C}\}}\frac{\dd\p^{\bm\gamma}_{N}}{\dd\p^{\bm\omega,\bfH}_N}\right]\leq \lowlim_{N\to\infty}\frac{1}{N}\log \mathbb{E}\left[\mathbbm{1}_{\{\bm\mu_N\in \mathcal{O}^{C}\}}\right]=-\infty
 \end{align}
 Moreover, since for any given sequences $(a_{N})_{N\in \mathbb{N}}$ and $(b_{N})_{N\in \mathbb{N}}$ it holds that 
 \begin{equation}
     \lowlim_{N\to \infty}\frac{1}{N}\log(a_{N}+b_{N})= \lowlim_{N\to \infty}\frac{1}{N}\log(\max\{a_{N},b_{N}\})
 \end{equation}
 the result \eqref{needed} follows. \\}
\end{comment}
Using this and Jensen inequality, we have that 
\begin{align}
\lowlim_{N\to\infty} \frac{1}{N}\log \p^{\bm\gamma}_N(\bm \mu_N \in \mathcal{O})  
    &=\lowlim_{N\to \infty}\frac{1}{N}\log \E^{\bm\omega,\bfH}_N\left[\frac{\dd\p^{\bm\gamma}_{N}}{\dd\p^{\bm\omega,\bfH}_N}\right]\nonumber
    \\ 
    &\geq \lowlim_{N\to \infty}\frac{1}{N} \E^{\bm\omega,\bfH}_N\left[\log\left(\frac{\dd\p^{\bm\gamma}_{N}}{\dd\p^{\bm\omega,\bfH}_N}\right)\right]\nonumber
    \\ 
    &= \lowlim_{N\to\infty} \frac{1}{N}\E^{\bm\omega,\bfH}_N\left[\log\left(Z_{T,N}^{\bm{H}}(\bm \mu_N)\cdot \frac{\dd\nu^{\bm\gamma}_{N}}{\dd\nu^{\bm\omega}_N}\right)\right]\nonumber\\
    &=-\left[\ell(\bfrho,\bfH)-\tfrac{1}{2}||\bfH||_{\mathcal{H}(\bfrho)}^2+h(\bfrho(0);\bm{\gamma})\right].
\end{align}
%In the last equaility, using Jensen, we have that
%\begin{align}
 %   \widetilde{\mathbb{E}}_{N}\left[\log\left(\mathbbm{1}_{\{\bm\mu_N\in A\}}\right)\right]\leq \log \left( \widetilde{\mathbb{E}}_{N}\left[\mathbbm{1}_{\{\bm\mu_N\in A\}}\right]\right)
%\end{align}
%and by \eqref{typical-path} we get, by the continuity of the logarithm,
%\begin{align}
%    \lowlim_{N\to \infty}\frac{1}{N}\log \left( \widetilde{\mathbb{E}}_{N}\left[\mathbbm{1}_{\{\bm\mu_N\in A\}}\right]\right)=\lowlim_{N\to \infty}\frac{1}{N}\log(\mathbb{P}_{N}^{\gamma,H}\left(\bm\mu_N\in A\right)=0
%\end{align}
Lastly, by Lemma \ref{lemma51}, we know that  $\ell(\bfrho,\bfH) = ||\bfH||_{\mathcal{H}(\bfrho)}^2$ and $\calI_0(\bfrho) = \tfrac{1}{2}||\bfH||_{\mathcal{H}(\bfrho)}^2$, hence we indeed find that 
\begin{equation}
\lowlim_{N\to\infty} \frac{1}{N}\log \p^{\bm\gamma}_N(\bm \mu_N \in \mathcal{O}) \geq -\left[\tfrac{1}{2}||\bfH||_{\mathcal{H}(\bfrho)}^2 + h(\bfrho(0);\bm{\gamma})\right] = \calI_{\bm\gamma}(\bfrho).
\end{equation}
\end{proof}
\begin{theorem}
    For any open set $\mathcal{O}\subset D\left([0,T],\mathcal{M}_{1}\times \mathcal{M}_{1}\right)$ we have that 
    \begin{align}
           \lowlim_{N\to \infty}\;\frac{1}{N}\log \mathbb{P}_{N}^{\bm\gamma}\left(\bm \mu_N\in \mathcal{O}\right)\geq -\inf_{\bfrho\in \mathcal{O}}\mathcal{I}_{\bm\gamma}(\bfrho)\,.
    \end{align}
\end{theorem}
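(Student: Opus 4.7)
The plan is to deduce the lower bound for arbitrary open sets from the pointwise lower bound already established in Theorem \ref{them-neighb}, which is a completely standard argument in large deviation theory. The key observation is that the bound in Theorem \ref{them-neighb} is valid uniformly for every open neighborhood around any fixed trajectory $\bfrho$, and the infimum on the right-hand side of the desired inequality is taken over all $\bfrho \in \mathcal{O}$, so a supremum-of-lower-bounds argument closes the gap.

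Concretely, I would first fix an arbitrary $\bfrho \in \mathcal{O}$. Since $\mathcal{O}$ is open in the Skorokhod topology on $D([0,T],\mathcal{M}_1\times\mathcal{M}_1)$, there exists an open neighborhood $\mathcal{O}_{\bfrho}$ of $\bfrho$ such that $\mathcal{O}_{\bfrho}\subset \mathcal{O}$. Using monotonicity of probability,
\begin{equation}
\p^{\bm\gamma}_N\left(\bm\mu_N \in \mathcal{O}\right) \;\geq\; \p^{\bm\gamma}_N\left(\bm\mu_N \in \mathcal{O}_{\bfrho}\right),
\end{equation}
so that
\begin{equation}
\lowlim_{N\to\infty} \frac{1}{N}\log \p^{\bm\gamma}_N\left(\bm\mu_N \in \mathcal{O}\right) \;\geq\; \lowlim_{N\to\infty} \frac{1}{N}\log \p^{\bm\gamma}_N\left(\bm\mu_N \in \mathcal{O}_{\bfrho}\right) \;\geq\; -\calI_{\bm\gamma}(\bfrho),
\end{equation}
where the last inequality is a direct application of Theorem \ref{them-neighb}.

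Since the left-hand side does not depend on $\bfrho$, while the resulting inequality holds for every $\bfrho \in \mathcal{O}$, I would then take the supremum over $\bfrho \in \mathcal{O}$ on the right-hand side, yielding
\begin{equation}
\lowlim_{N\to\infty} \frac{1}{N}\log \p^{\bm\gamma}_N\left(\bm\mu_N \in \mathcal{O}\right) \;\geq\; \sup_{\bfrho \in \mathcal{O}}\left(-\calI_{\bm\gamma}(\bfrho)\right) \;=\; -\inf_{\bfrho \in \mathcal{O}} \calI_{\bm\gamma}(\bfrho),
\end{equation}
which is exactly the desired bound. There is no real obstacle here: the work has already been done in Theorem \ref{them-neighb}, whose proof relied on the hydrodynamic limit of the weakly asymmetric process (Theorem \ref{hydro wasep}), the explicit form of the rate function via the Riesz representation (Lemma \ref{lemma51}), the entropy identification of the static part (Lemma \ref{explicit static}), and the Radon--Nikodym computation of Section \ref{radon-Nikodym section}. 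The present statement is a one-line consequence of that theorem together with the definition of an open set.
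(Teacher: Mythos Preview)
Your proposal is correct and follows exactly the same approach as the paper, which simply states that the result is a straightforward consequence of Theorem \ref{them-neighb}. You have merely spelled out the standard one-line deduction in detail.
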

\begin{proof}
The proof is a straightforward consequence of Theorem \ref{them-neighb}.
\end{proof}
\begin{remark}\label{recover 2}
In parallel with  Remark \ref{recover remark}, by choosing the same potential $H_1=H_2=H$ we can recover the large deviation rate function of the dynamics of the single species SEP as given in \cite{KOV}. Namely, by putting  $\varrho = \rho_1+\rho_2$ the rate function $\mathcal{I}_0(\bm\rho)$ from \eqref{norm rate function} becomes a function of $\varrho$ only, i.e., 
\begin{align*}
\mathcal{I}_0(\bm\rho) = \frac{1}{2} ||\bm H||_{\mathcal{H}(\bm\rho)}^2 
= \int_0^T \langle (\varrho(t)(1-\varrho( t)), \big(\nabla H(\cdot, t)\big)^2\rangle \dd t.
\end{align*} 
\end{remark}
\begin{remark}
  In parallel with Remark \ref{generalization remark HD}, the large deviation result reported in this section can be generalized to an arbitrary number of species, namely $\alpha\in\{0,1,\ldots,n\}$. In this case, we consider a $n$-dimensional vector of densities denoted by $\bm{\rho}=(\rho_{1},\ldots,\rho_{n})$. Moreover, we consider $n$-potentials denoted by $H_{\alpha}$, that we list in the vector $\bm{H}=(H_{1},\ldots,H_{n})$.  Therefore, the large deviation functional reads 
    \begin{align}
        \mathcal{I}^{(n)}(\bm{\rho})=\mathcal{I}^{(n)}_{0}(\bm{\rho})+h^{(n)}(\bm{\rho}(0);\bm{\gamma})\,.
    \end{align}
    Here $h^{(n)}(\bm{\rho}(0);\bm{\gamma})$ is the relative entropy between the multinomial distributions with densities corresponding to $\bm{\rho}$ evaluated at time $t=0$ and the original starting density given by $\bm{\gamma}=(\gamma_{1},\ldots,\gamma_{n})$. 
    Moreover, we have that 
    \begin{align}
        \mathcal{I}^{(n)}_{0}(\bm{\rho})=\frac{1}{2}\lVert \bm{H}\rVert_{\mathcal{H}(\bm{\rho})}^2,
    \end{align}
    where the norm is given by 
    \begin{align}
    \lVert \bfH\rVert_{\mathcal{H}(\bm{\rho})}^2&=2\sum_{\alpha=1}^{n}\int_{0}^{T}\langle \rho_{\alpha}(t)(1-\rho_{\alpha}(t)),(\nabla H_{\alpha}(\cdot,t))^{2}\rangle \dd t\nonumber
    \\&-2\sum_{\alpha=1}^{n}\sum_{\beta\neq \alpha}\int_{0}^{T}\langle \rho_{\alpha}(t)\rho_{\beta}(t),\nabla H_{\alpha}(\cdot,t)\nabla H_{\beta}(\cdot,t)\rangle \dd t\,.
    \end{align}
    For any $k\leq n$ we there is a relation between the large deviation rate function of the $n$-species model $\mathcal{I}^{(n)}_0$ and of the $k$-species model $\mathcal{I}^{(k)}_0$. Namely, for any partition $\{A_1,...,A_k\}$ of the set $\{1,...,n\}$, by choosing the same potentials within the partitions, i.e., $H_j = H_\ell$ for every $j \in A_\ell$, we find that 
    \begin{equation}
        \mathcal{I}^{(n)}(\rho_1, ..., \rho_n) = \mathcal{I}^{(k)}_0 (\tilde{\rho}_1,...,\tilde{\rho}_k)
    \end{equation}
    where $\tilde{\rho}_\ell = \sum_{j\in A_\ell} \rho_j$. This generalizes the result in Remark \ref{recover 2}
\end{remark}

\section{Proof of the Hydrodynamic limit of the Weakly-asymmetric process} \label{Appendix B}

We consider the  Dynkin martingale 
\begin{align}
     M_{N}^{\bfG}(t) 
    &= \langle\bm\mu_{N}(t),\bm G(\cdot,t)\rangle -\langle\bm\mu_{N}(0),\bm G(\cdot,0)\rangle -\int_{0}^{t}N^{2}(\mathcal{L}^{\bfH}+\partial_s)\langle \bm\mu_{N}(s),\bm G(\cdot,s)\rangle \dd s\nonumber\\
    &= M_{1,N}^{G_1}(t) + M_{2,N}^{G_2}(t)
\end{align}
where
\begin{align}
    M_{\alpha,N}^{G}(t)=\langle\mu_{\alpha,N}(t),G(\cdot,t)\rangle -\langle\mu_{\alpha,N}(0),G(\cdot,0)\rangle -\int_{0}^{t}N^{2}(\mathcal{L}^{\bfH}+\partial_s) \langle \mu_{\alpha,N}(s),G(\cdot,s)\rangle \dd s.
\end{align}
We see that we need to apply the generator $\mathcal{L}^{\bfH}$ to the density field $\langle \mu_{\alpha,N}(s),G(\cdot,s)\rangle$. This can be derived from the effect of the generator applied to the function $f(\bm\eta) = \eta_\alpha^x$. We start in the case of $\alpha=1$. If we look at $\mathcal{L}^{\bfH} \eta_1^x$ we get a positive (resp. negative) contribution of the rates where a particle of type 1 is added (resp. subtracted) at position $x$, i.e., 

\begin{align*}
\mathcal{L}^{\bfH} \eta_1^x  
&=  c^{\bfH,0,1}_{(x,x+1)}(s) + c^{\bfH,2,1}_{(x,x+1)}(s) + c^{\bfH,1,0}_{(x-1,x)}(s) + c^{\bfH,1,2}_{(x-1,x)}(s)\\
&\qquad \qquad - c^{\bfH,1,0}_{(x,x+1)}(s) - c^{\bfH,1,2}_{(x,x+1)}(s) - c^{\bfH,0,1}_{(x-1,x)}(s) - c^{\bfH,2,1}_{(x-1,x)}(s).
\end{align*}
Using that 
\begin{equation}
c^{\bfH,\alpha\beta}_{(x,x+1)}(s) = \exp\Big(\nabla_NH_{\alpha\beta}(\tfrac{x}{N},s)\Big)\eta_\alpha^{x}\eta_\beta^{x+1}  = \left(1+\nabla_NH_{\alpha\beta}(\tfrac{x}{N},s) + \left(\nabla_NH_{\alpha\beta}(\tfrac{x}{N},s)\right)^2\right)\eta_\alpha^x \eta_\beta^{x+1} + \mathcal{O}(\tfrac{1}{N^3}),
\end{equation}
we find that 
\begin{align*}
    \mathcal{L}^{\bfH} \eta_1^x  
    &= \left(1+ \nabla_NH_{01}(\tfrac{x}{N},s)\right)\eta_0^x \eta_1^{x+1}
    +\left(1+ \nabla_NH_{21}(\tfrac{x}{N},s)\right)\eta_2^x \eta_1^{x+1}\\
    &\qquad \qquad 
    +\left(1+ \nabla_NH_{10}(\tfrac{x-1}{N},s)\right)\eta_1^{x-1} \eta_0^{x}
    +\left(1+ \nabla_NH_{12}(\tfrac{x-1}{N},s)\right)\eta_1^{x-1} \eta_2^{x}\\
    &\qquad \qquad 
    -\left(1+ \nabla_NH_{10}(\tfrac{x}{N},s)\right)\eta_1^x \eta_0^{x+1}
    -\left(1+ \nabla_NH_{12}(\tfrac{x}{N},s)\right)\eta_1^x \eta_2^{x+1}\\
    &\qquad \qquad 
    -\left(1+ \nabla_NH_{01}(\tfrac{x-1}{N},s)\right)\eta_0^{x-1} \eta_1^{x}
    -\left(1+ \nabla_NH_{21}(\tfrac{x-1}{N},s)\right)\eta_2^{x-1} \eta_1^{x} + R(N,x,s)
\end{align*}
with $R(N,x,s)$ a remainder term which we will show vanishes once combined with the test function $G$ as we send $N\to\infty$.

First we will focus on the terms that do not depend on $\bfH$ in the above equation. Using the fact that $\eta_0 = 1 - \eta_1-\eta_2$, after some calculation we find that  
\begin{align*}
    \eta_0^x \eta_1^{x+1}
    +\eta_2^x \eta_1^{x+1}
    +\eta_1^{x-1} \eta_0^{x}
    +\eta_1^{x-1} \eta_2^{x}
    -\eta_1^x \eta_0^{x+1}
    -\eta_1^x \eta_2^{x+1}
    -\eta_0^{x-1} \eta_1^{x}
    -\eta_2^{x-1} \eta_1^{x}\qquad \qquad \\
    = \eta_1^{x+1} + \eta_1^{x-1} - 2\eta_1^x,
\end{align*}
i.e., we recover the discrete Laplacian of $\eta_1^x$. 

For the terms depending on the {\color{black}potential} $H_{10} = -H_{01}$ we then have that 
\begin{align*}
    \left(\eta_1^{x-1} \eta_0^{x}+ \eta_0^{x-1}\eta_1^x\right)\nabla_NH_{10}(\tfrac{x-1}{N},s)
    -\left(\eta_0^x \eta_1^{x+1}  + \eta_1^x \eta_0^{x+1}\right)\nabla_NH_{10}(\tfrac{x}{N},s).
\end{align*}
and the terms depending on the {\color{black}potential} $H_{12}=-H_{21}$
\begin{equation}
  \left(\eta_1^{x-1}\eta_2^x + \eta_2^{x-1}\eta_1^x \right)\nabla_NH_{12}(\tfrac{x-1}{N},s)
  -\left(\eta_2^x\eta_1^{x+1} + \eta_1^x\eta_2^{x+1} \right)\nabla_NH_{12}(\tfrac{x}{N},s).
\end{equation}

With these calculations, we then find that 
\begin{align*}
N^2&\mathcal{L}^{\bfH}\langle \mu_{1,N}(s),G(\cdot,s)\rangle\\ 
&= N\sum_{x=1}^N (\eta_1^{x+1} + \eta_1^{x-1} - 2\eta_1^x)G(\tfrac{x}{N},s)\\
&\qquad + N\sum_{x=1}^N \left(\left(\eta_1^{x-1} \eta_0^{x}+ \eta_0^{x-1}\eta_1^x\right)\nabla_NH_{10}(\tfrac{x-1}{N},s)
    -\left(\eta_0^x \eta_1^{x+1}  + \eta_1^x \eta_0^{x+1}\right)\nabla_NH_{10}(\tfrac{x}{N},s)\right) G(\tfrac{x}{N},s)\\
    &\qquad +N\sum_{x=1}^N \left( \left(\eta_1^{x-1}\eta_2^x + \eta_2^{x-1}\eta_1^x \right)\nabla_NH_{12}(\tfrac{x-1}{N},s)-\left(\eta_2^x\eta_1^{x+1} + \eta_1^x\eta_2^{x+1} \right)\nabla_NH_{12}(\tfrac{x}{N},s)\right) G(\tfrac{x}{N},s)\\
    &\qquad + N\sum_{x=1}^N R(N,x,s)G(\tfrac{x}{N},s).
\end{align*}
where by reordering the terms, we have
\begin{align*}
   N^2\mathcal{L}^{\bfH}\langle \mu_{1,N}(s),G(\cdot,s)\rangle &=N\sum_{x=1}^N \eta_1^x \Delta_NG(\tfrac{x}{N},s) + N\sum_{x=1}^N \left(\eta_0^x \eta_1^{x+1}  + \eta_1^x \eta_0^{x+1}\right)\nabla_NH_{10}(\tfrac{x}{N},s)\nabla_N G(\tfrac{x}{N},s)\\
   &\qquad + N\sum_{x=1}^N \left(\eta_2^x \eta_1^{x+1}  + \eta_1^x \eta_2^{x+1}\right)\nabla_NH_{12}(\tfrac{x}{N},s)\nabla_N G(\tfrac{x}{N},s)+ N\sum_{x=1}^N R(N,x,s)G(\tfrac{x}{N},s)
\end{align*}

The remainder term $R(N,x,s)$ is given by 
\begin{align*}
R(N,x,s) 
&= \left(\eta_0^x\eta_1^{x+1} - \eta_1^x\eta_0^{x+1}\right)\Big(\nabla_NH_{10}(\tfrac{x}{N},s)\Big)^2 
- \left(\eta_0^{x-1}\eta_1^x - \eta_1^{x-1}\eta_0^x\right)\left(\nabla_NH_{10}(\tfrac{x-1}{N},s)\right)^2\\
&\qquad +\left(\eta_2^x\eta_1^{x+1} - \eta_1^x\eta_2^{x+1}\right)\Big(\nabla_NH_{12}(\tfrac{x}{N},s)\Big)^2 
- \left(\eta_2^{x-1}\eta_1^x - \eta_1^{x-1}\eta_2^x\right)\left(\nabla_NH_{12}(\tfrac{x-1}{N},s)\right)^2 + \mathcal{O}(\tfrac{1}{N^3})
\end{align*}
when combined with a test function, we see that 
\begin{align*}
    N\sum_{x=1}^NR(N,x,s)G(\tfrac{x}{N},s) 
    &= -N\sum_{x=1}^N \left(\eta_0^x\eta_1^{x+1} - \eta_1^x\eta_0^{x+1}\right)\Big(\nabla_NH_{10}(\tfrac{x}{N},s)\Big)^2 \nabla_NG(\tfrac{x}{N},s)\\
    &\qquad \qquad - N\sum_{x=1}^N \left(\eta_2^x\eta_1^{x+1} - \eta_1^x\eta_2^{x+1}\right)\Big(\nabla_NH_{12}(\tfrac{x}{N},s)\Big)^2 \nabla_NG(\tfrac{x}{N},s) + \mathcal{O}(\tfrac{1}{N})\,.
\end{align*}
Note that this vanishes as $N\to\infty$ since the discrete derivative $\nabla_N$ is an operator of order $\tfrac{1}{N}$.

Thus, all in all:
\begin{align}\label{action-generator-1}
     N^2\mathcal{L}^{\bfH}\langle \mu_{1,N}(s),G(\cdot,s)\rangle 
    &=\frac{1}{N}\sum_{x\in \mathbb{Z}}\eta_{1}^{x}\Delta  G\left(\tfrac{x}{N}\right)\nonumber
    +\frac{1}{N}\sum_{x=1}^N \left(\eta_0^x \eta_1^{x+1}  + \eta_1^x \eta_0^{x+1}\right)\nabla H_{10}(\tfrac{x}{N},s)\nabla G(\tfrac{x}{N},s)\nonumber\\
   &\qquad + \frac{1}{N}\sum_{x=1}^N \left(\eta_2^x \eta_1^{x+1}  + \eta_1^x \eta_2^{x+1}\right)\nabla H_{12}(\tfrac{x}{N},s)\nabla G(\tfrac{x}{N},s) + \mathcal{O}(\tfrac{1}{N})
\end{align}

By choosing: $\phi(\bm{\eta})=\eta_{1}^{x}\eta_{1}^{x+1}$ and $\phi(\bm{\eta})=\eta_{1}^{x}\eta_{2}^{x+1}$ we now use the superexponential estimate in Theorem \ref{superexponential estimate} twice and we replace 
\begin{align}\label{replace-HD-fast}
    \frac{2}{N}\sum_{x\in\mathbb{Z}}\eta_{1}^{x}\eta_{1}^{x+1}\;\longrightarrow\; \frac{2}{N}\sum_{x\in\mathbb{Z}}\left(\frac{1}{2N\epsilon+1}\sum_{|x-y|\leq \epsilon N}\eta_{1}^{y}\right)\left(\frac{1}{2N\epsilon+1}\sum_{|x-y|\leq \epsilon N}\eta_{1}^{y}\right)
\end{align}
and 
\begin{align}\label{replace-HD-fast-II}
    \frac{2}{N}\sum_{x\in\mathbb{Z}}\eta_{1}^{x}\eta_{2}^{x+1}\;\longrightarrow\; \frac{2}{N}\sum_{x\in\mathbb{Z}}\left(\frac{1}{2N\epsilon+1}\sum_{|x-y|\leq \epsilon N}\eta_{1}^{y}\right)\left(\frac{1}{2N\epsilon+1}\sum_{|x-y|\leq \epsilon N}\eta_{2}^{y}\right)\,.
\end{align}
Indeed, to prove equation \eqref{replace-HD-fast} one writes that, for all $G,H_{10}\in C^{2,1}(\mathbb{T}\times [0,T])$ and for all $a>0$, there exists $\epsilon_{0}>0$ such that for all $\bm{\eta}\in \Omega$ and for all $\epsilon<\epsilon_{0}$ we have that 
\begin{align}
  & \left| \frac{1}{N}\int_{0}^{T}\sum_{x=0}^{N}\nabla G\left(\frac{x}{N},s\right)\nabla H_{10}\left(\frac{x}{N},s\right)\eta_{1}^{x}\eta_{1}^{x+1}\dd s\right.\nonumber
   \\&\left.-\frac{1}{N}\int_{0}^{T}\nabla G\left(\frac{x}{N},s\right)\nabla H_{10}\left(\frac{x}{N},s\right)\left(\frac{1}{2\epsilon N+1}\sum_{|x-y|\leq \epsilon N}\eta_{1}^{y}\eta_{1}^{y+1}\right)\dd s\right| \leq a\,.
\end{align}
Therefore, using the superexponential estimate of Theorem \ref{superexponential estimate} we have that 
\begin{align}
    \uplim_{\epsilon\to 0}\uplim_{N\to \infty}\frac{1}{N}\log \mathbb{P}_{N}^{\bm{\gamma},\bm{H}}&\left(\left| \frac{1}{N}\int_{0}^{T}\sum_{x=0}^{N}\nabla G\left(\frac{x}{N},s\right)\nabla H_{10}\left(\frac{x}{N},s\right)\eta_{1}^{x}\eta_{1}^{x+1}\dd s\right.\right.\nonumber
   \\&\quad -\left.\left.\frac{1}{N}\int_{0}^{T}\nabla G\left(\frac{x}{N},s\right)\nabla H_{10}\left(\frac{x}{N},s\right)\right.\right.\nonumber
   \\&
   \left.\left.\quad \cdot\frac{2}{N}\sum_{x\in\mathbb{Z}}\left(\frac{1}{2N\epsilon+1}\sum_{|x-y|\leq \epsilon N}\eta_{1}^{y}\right)\left(\frac{1}{2N\epsilon+1}\sum_{|x-y|\leq \epsilon N}\eta_{1}^{y}\right)\dd s\right| \geq a\right)=-\infty\,.
\end{align}
With the same argument one can prove \eqref{replace-HD-fast-II} as well. 
Moreover, using $q_\epsilon = \frac{1}{2\epsilon}\mathbbm{1}_{\{[-\epsilon,+\epsilon]\}}$, then we can write  
\begin{align}\label{conv1}
    \frac{1}{N}\sum_{x\in\mathbb{Z}}\left(\frac{1}{2N\epsilon+1}\sum_{|x-y|\leq \epsilon N}\eta_{1}^{y}\right)\left(\frac{1}{2N\epsilon+1}\sum_{|x-y|\leq \epsilon N}\eta_{1}^{y}\right)=\left(\mu_{1,N}(s)\ast q_\epsilon \right) \,\left(\mu_{1,N}(s)\ast q_\epsilon\right) 
\end{align}
and 
\begin{align}\label{conv2}
    \frac{1}{N}\sum_{x\in\mathbb{Z}}\left(\frac{1}{2N\epsilon+1}\sum_{|x-y|\leq \epsilon N}\eta_{1}^{y}\right)\left(\frac{1}{2N\epsilon+1}\sum_{|x-y|\leq \epsilon N}\eta_{2}^{y}\right)=\left(\mu_{1,N}(s)\ast q_\epsilon\right) \,\left(\mu_{2,N}(s)\ast q_\epsilon \right)
\end{align}
where $\ast$ is the convolution. Combining \eqref{action-generator-1} with equations \eqref{conv1} and \eqref{conv2}, we have that the Dynkin martingale $M_{1,N}^{G}(t)$ is written as a function of the empirical density, namely
\begin{align}\label{martingale-final-form}
M_{1,N}^G(t) 
&= \langle\mu_{1,N}(t),G(\cdot,t)\rangle -\langle\mu_{1,N}(0),G(\cdot,0)\rangle -\int_{0}^{t} \langle \mu_{1,N}(s),(\partial_s+\Delta) G(\cdot,s)\rangle \dd s\nonumber\\
&\qquad \qquad -2 \int_0^t \left\langle \left(\mu_{1,N}(s)\ast q_\epsilon \right)\Big(1-\left(\mu_{1,N}(s)\ast q_\epsilon \right)-\left(\mu_{2,N}(s)\ast q_\epsilon \right)\Big), \nabla H_{10}(\cdot, s) \nabla G(\cdot,s)\right\rangle \dd s\nonumber\\
&\qquad \qquad -2 \int_0^t \left\langle \left(\mu_{1,N}(s)\ast q_\epsilon \right)\left(\mu_{2,N}(s)\ast q_\epsilon \right), \nabla H_{12}(\cdot,s)\nabla G(\cdot,s) \right\rangle \dd s + R(\epsilon,N)
\end{align}
where the remainder term $R(\epsilon,N)$ goes to zero in probability as $N\to\infty$ and $\epsilon\to 0$. A similar result can be found for $M_{2,N}^G(t)$. 

Now we show that the martingale $ M_{N}^{\bm G}(t)$ vanishes as $N\to\infty$. %By the following inequality
%\begin{equation}
%\mathbb{P}_{N}^{\bm\gamma,\bfH}\left(\sup_{t\in [0,T]}| M_{N}^{\bfG}(t)|^{2}\geq \delta\right) \leq %\mathbb{P}_{N}^{\bm\gamma,\bfH}\left(\sup_{t\in [0,T]}2|M_{1,N}^{G_1}(t)|^{2}\geq \tfrac{\delta}{2}\right) + %\mathbb{P}_{N}^{\bm\gamma,\bfH}\left(\sup_{t\in [0,T]}2|M_{2,N}^{G_2}(t)|^{2}\geq \tfrac{\delta}{2}\right)
%\end{equation}
%it is enough to show it for  $M_{\alpha,N}^G(t)$.
The quadratic variation is computed by the Carré-du-Champ formula as
\begin{align}
    \Gamma_{N,t}^{\bm{G}}=&N^2\mathcal{L}^{\bm H}\langle \bm{\mu}_{N}(t),\bm{G}(\cdot,t)\rangle^{2}-2\langle \bm{\mu}_{N}(t),\bm{G}(\cdot,t)\rangle N^2\mathcal{L}^{\bm H}\langle \bm{\mu}_{N}(t),\bm{G}(\cdot,t)\rangle\nonumber
    \\=&
    \sum_{x=1}^{N}\sum_{\alpha,\beta=0}^{2}c^{\bm{H},\alpha,\beta}_{(x,x+1)}(t)\left[\langle \bm{\mu}_{N}(\bm{\eta}_{\alpha,\beta}^{x,x+1}(N^{2}t)),\bm{G}(\cdot,t)\rangle-\langle \bm{\mu}_{N}(\bm{\eta}(N^{2}t),\bm{G}(\cdot,t)\rangle\right]^{2}\nonumber
    \\=&
    \sum_{x=1}^{N}\left(c^{\bm{H},0,1}_{(x,x+1)}(t)+c^{\bm{H},1,0}_{(x,x+1)}(t)\right)\left(\nabla_N G_{1}\left(\tfrac{x}{N},t\right)\right)^{2}+\sum_{x=1}^{N}\left(c^{\bm{H},0,2}_{(x,x+1)}(t)+c^{\bm{H},2,0}_{(x,x+1)}(t)\right)\left(\nabla_N G_{2}\left(\tfrac{x}{N},t\right)\right)^{2}\nonumber
    \\&\qquad + \sum_{x=1}^{N}\left(c^{\bm{H},1,2}_{(x,x+1)}(t)+c^{\bm{H},2,1}_{(x,x+1)}(t)\right)\left(\nabla_N G_{1}\left(\tfrac{x}{N},t\right)-\nabla_N G_{2}\left(\tfrac{x}{N},t\right)\right)^{2},
\end{align}
which is of order $1/N$ and goes to $0$ as $N\to \infty$. This implies that for all $\delta>0$
\begin{align}\label{vanishing-martingale}
    \lim_{N\to \infty}\mathbb{P}_{N}^{\bm\gamma,\bfH}\left(\sup_{t\in [0,T]}|M_{N}^{\bm G}(t)|\geq \delta\right)&\leq \lim_{N\to \infty}\frac{1}{\delta^{2}}\mathbb{E}_{N}^{\bm{\gamma},\bm{H}}\left[\sup_{t\in [0,T]}|M_{N}^{\bm G}(t)|^{2}\right]\nonumber
    \\&\leq\lim_{N\to \infty}\frac{4}{\delta^{2}}\mathbb{E}_{N}^{\bm{\gamma},\bm{H}}\left[|M_{N}^{\bm G}(T)|^{2}\right]\nonumber
    \\&=
    \lim_{N\to \infty}\frac{4}{\delta^{2}}\mathbb{E}_{N}^{\bm{\gamma},\bm{H}}\left[\int_{0}^{T}\Gamma_{N,s}^{\bm{G}}\dd s \right]=0\,.
\end{align}
Next, one can show tightness of the sequence of random variables $ \bm{\mu}_{N}$ is tight by using \eqref{vanishing-martingale}. The argument is standard and we refer to \cite{KipnisLandim,seppalainen}. By tightness we have the existence of convergent subsequences, and by combining this with \eqref{martingale-final-form} and \eqref{vanishing-martingale} we observe that these convergent subsequences are concentrated on the set of trajectories $\bm\rho$ such that for all $\delta>0$ there exists an $\hat{\epsilon}$ such that for all $\epsilon\leq \hat{\epsilon}$ and for all $t\in [0,T]$ we have that
\begin{align}
    &\left|\langle \bm{\rho}(t),\bm{G}(\cdot,t)\rangle-\langle \bm{\rho}(0),\bm{G}(\cdot,0)\rangle-\int_{0}^{t}\langle\bm{\rho}(s),\left(\partial_{s}+\Delta \right),\bm{G}(\cdot,s)\rangle\dd s\right.\nonumber
    \\ & \qquad +\left. \int_{0}^{t}\langle\left(\rho_{1}(t)\ast q_{\epsilon}\right)(1-\left(\rho_{1}(t)\ast q_{\epsilon}\right)-\left(\rho_{2}(t)\ast q_{\epsilon}\right)), \nabla G_{1}(\cdot,s)H_{10}(\cdot,s)\rangle\dd s\right.\nonumber 
    \\&
    \left.\qquad +\int_{0}^{t}\langle\left(\rho_{2}(t)\ast q_{\epsilon}\right)(1-\left(\rho_{1}(t)\ast q_{\epsilon}\right)-\left(\rho_{2}(t)\ast q_{\epsilon}\right)), \nabla G_{2}(\cdot,s)\nabla H_{20}(\cdot,s)\rangle\dd s \right.\nonumber
    \\&\qquad - \left.\int_{0}^{t}\langle\left(\rho_{1}(t)\ast q_{\epsilon}\right)\left(\rho_{2}(t)\ast q_{\epsilon}\right), \left(\nabla G_{1}(\cdot,s)-\nabla G_{2}(\cdot,s)\right)\nabla H_{12}(\cdot,s)\rangle\dd s
    \right|\leq \delta \,.
\end{align}
Finally, letting $\hat{\epsilon}$ tend to $0$, we observe that $\bm{\rho}$ solves equation \eqref{HD-equations} in the sense of distributions.

\bibliography{reference2}
\bibliographystyle{unsrt}
\appendix
\newpage
\textbf{\LARGE{Appendix}}
\section{Proof of the superexponential estimate}\label{proof of superexponential estimate}
The objective of this appendix is to prove the superexponential estimate presented in Theorem \ref{superexponential estimate}. 
We follow here the road of the original paper \cite{KOV}, i.e., reducing the problem to one and two blocks estimates which then boil down to a uniform equivalence of ensembles. 
For the convenience of the reader and self-consistency of the paper, we nevertheless prefer to provide full details.
\subsection{Equivalence of ensembles}
In the following we denote by $\nu_{N}^{\alpha_{1},\alpha_{2}}$ the measure 
\begin{align}
    \nu^{N}_{\alpha_{1},\alpha_{2}}=\bigotimes_{x=1}^{N}\nu^{N,x}_{\alpha_{1},\alpha_{2}}\qquad \text{where}\qquad \nu^{N,x}_{\alpha_{1},\alpha_{2}}\sim \text{Multinomial}(1,1-\alpha_{1}-\alpha_{2},\alpha_{1},\alpha_{2})\,.
\end{align}
\begin{lemma}\label{lemma-condition}
    Given $k_{1},k_{2}\in \mathbb{N}_{0}$ such that $k_{1}+k_{2}\leq N$, the distribution $\nu_{\alpha_{1},\alpha_{2}}^{N}$ conditioned to the event $$\Omega_{k_1,k_2} := \left\{\bm \eta\ \Big|\ \sum_{x=1}^{N}\eta_{1}^{x}=|\eta_{1}|=k_{1},\,\sum_{x=1}^{N}\eta_{2}^{x}=|\eta_{2}|=k_{2}\right\}$$ is distributed as a uniform distribution of $k_{1},k_{2}$ particles of colours $1$ and $2$ respectively, over $N$ available sites. That is 
    \begin{align}
        \nu_{\alpha_{1},\alpha_{2}}^{N}\left(\bm{\eta}\bigg| |\eta_{1}|=k_{1},|\eta_{2}|=k_{2}\right)=
        \begin{cases}
            \binom{N}{k_{1},k_{2}}^{-1} \ \ \ \ &\text{if $\eta \in \Omega_{k_1,k_2}$}\\
            0&\text{else}
        \end{cases}
    \end{align}
\end{lemma}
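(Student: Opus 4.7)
The plan is to observe that this is purely a direct calculation of a conditional probability, relying on the exchangeability that is built into the product structure of $\nu_{\alpha_1,\alpha_2}^N$. First I would write the product measure explicitly: for any configuration $\bm\eta\in\Omega$ with $|\eta_1|=k_1$, $|\eta_2|=k_2$ (so automatically $|\eta_0|=N-k_1-k_2$ by the exclusion constraint), the probability factorizes site-by-site into factors $\alpha_1$, $\alpha_2$, or $(1-\alpha_1-\alpha_2)$ depending on the local type. The key observation is that the result only depends on the \emph{number} of each factor that appears, not on \emph{which} sites carry which type, so
\begin{equation*}
\nu_{\alpha_1,\alpha_2}^N(\bm\eta) = \alpha_1^{k_1}\,\alpha_2^{k_2}\,(1-\alpha_1-\alpha_2)^{N-k_1-k_2}
\end{equation*}
for every $\bm\eta\in\Omega_{k_1,k_2}$.

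Next I would compute the normalization $\nu_{\alpha_1,\alpha_2}^N(\Omega_{k_1,k_2})$ by summing the above over all $\bm\eta\in\Omega_{k_1,k_2}$. Since the summand is constant, this reduces to counting the cardinality of $\Omega_{k_1,k_2}$, which is the number of ways to place $k_1$ particles of type $1$, $k_2$ particles of type $2$, and $N-k_1-k_2$ holes on $N$ ordered sites, namely the multinomial coefficient $\binom{N}{k_1,k_2} = \frac{N!}{k_1!\,k_2!\,(N-k_1-k_2)!}$. Dividing, the $\alpha$-dependent factors cancel exactly and we obtain $\binom{N}{k_1,k_2}^{-1}$ for every $\bm\eta\in\Omega_{k_1,k_2}$, and of course $0$ for $\bm\eta\notin\Omega_{k_1,k_2}$ by the definition of conditional probability on the event $\Omega_{k_1,k_2}$.

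There is really no obstacle in this argument — the whole point of the lemma is to record the elementary fact that a product multinomial conditioned on its total occupation numbers is uniform on the corresponding "micro-canonical" shell. The only thing to be careful about is keeping track of all three species (including the holes via $\eta_0^x = 1-\eta_1^x-\eta_2^x$) so that the exponent $N-k_1-k_2$ is correctly identified, but this is automatic from \eqref{exclusion-constraint}.
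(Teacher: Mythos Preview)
Your proof is correct and follows essentially the same approach as the paper: you compute $\nu_{\alpha_1,\alpha_2}^N(\bm\eta)$ on $\Omega_{k_1,k_2}$, observe it is constant in $\bm\eta$, divide by the normalization $|\Omega_{k_1,k_2}|\cdot\alpha_1^{k_1}\alpha_2^{k_2}(1-\alpha_1-\alpha_2)^{N-k_1-k_2}$, and obtain $\binom{N}{k_1,k_2}^{-1}$. The paper does exactly this, writing the conditional probability as a ratio and noting that the $\alpha$-dependent factors cancel.
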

\begin{proof}
We denote by $\Omega_{k_{1},k_{2}}$ the subspace of $\Omega$ where there are $k_{1},k_{2}$ colours $1$ and $2$ respectively. Then, we have that 
\begin{align}
    \nu_{\alpha_{1},\alpha_{2}}^{N}\left(\bm{\eta}\bigg| |\eta_{1}|=k_{1},|\eta_{2}|=k_{2}\right)=&\frac{\nu_{\alpha_{1},\alpha_{2}}^{N}\left(\bm{\eta}, |\eta_{1}|=k_{1},|\eta_{2}|=k_{2}\right)}{\nu_{\alpha_{1},\alpha_{2}}^{N}\left(\eta_{1}|=k_{1},|\eta_{2}|=k_{2}\right)}\nonumber
    \\=&
  \begin{cases}
      \frac{\alpha_{1}^{k}\alpha_{2}^{k}(1-\alpha_{1}-\alpha_{2})^{N-k_{1}-k_{2}}}{\sum_{\bm{\xi}\in \Omega_{k_{1},k_{2}}}\nu_{\alpha_{1},\alpha_{2}}^{N}(\bm{\xi})}\qquad &\text{if}\qquad |\eta_{1}|=k_{1}\quad \text{and}\quad |\eta_{2}|=k_{2}\,,\\
      0\qquad &\text{if}\qquad |\eta_{1}|\neq k_{1}\quad \text{or}\quad |\eta_{2}|\neq k_{2}\,.
  \end{cases}
\end{align}
By direct computations we write 
\begin{align}
\frac{\alpha_{1}^{k}\alpha_{2}^{k}(1-\alpha_{1}-\alpha_{2})^{N-k_{1}-k_{2}}}{\sum_{\bm{\xi}\in \Omega_{k_{1},k_{2}}}\nu_{\alpha_{1},\alpha_{2}}^{N}(\bm{\xi})}=\frac{\alpha_{1}^{k_{1}}\alpha_{2}^{k_{2}}(1-\alpha_{1}-\alpha_{2})^{N-k_{1}-k_{2}}}{\alpha_{1}^{k_{1}}\alpha_{2}^{k_{2}}(1-\alpha_{1}-\alpha_{2})^{N-k_{1}-k_{2}}}    \frac{1}{|\Omega_{k_{1},k_{2}}|}=\frac{1}{|\Omega_{k_{1},k_{2}}|}=\frac{1}{\binom{N}{k_{1},k_{2}}}\,.
\end{align}
\end{proof}
\begin{lemma}
    The measure $\nu_{\alpha_{1},\alpha_{2}}^{N}$ can be written as a convex combination of uniform measures, namely 
    \begin{equation}
        \nu_{\alpha_{1},\alpha_{2}}^{N}(\bm{\eta})=\sum_{k_{1}=0}^{N}\sum_{k_{2}=0}^{N}\mathbbm{1}_{\{k_{1}+k_{2}\leq N\}}\nu_{\alpha_{1},\alpha_{2}}^{N}\left(\bm{\eta}\bigg| |\eta_{1}|=k_{1},|\eta_{2}|=k_{2}\right)\nu_{\alpha_{1},\alpha_{2}}^{N}\left( |\eta_{1}|=k_{1},|\eta_{2}|=k_{2}\right)\,.
    \end{equation}
\end{lemma}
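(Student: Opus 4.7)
The plan is to recognize this identity as a direct application of the law of total probability, followed by invoking the previous Lemma \ref{lemma-condition} to identify the conditional measures as uniform distributions.

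First I would observe that the collection of events $\{\bm\eta : |\eta_1| = k_1, |\eta_2| = k_2\}$, indexed by pairs $(k_1, k_2) \in \mathbb{N}_0^2$ with $k_1 + k_2 \leq N$, forms a finite partition of the configuration space $\Omega$. Indeed, the exclusion constraint \eqref{exclusion-constraint} forces $|\eta_1| + |\eta_2| \leq N$, and for any $\bm\eta \in \Omega$ there is exactly one pair $(k_1, k_2)$ for which $\bm\eta$ belongs to the corresponding event. The indicator $\mathbbm{1}_{\{k_1 + k_2 \leq N\}}$ in the statement simply truncates the double sum to the relevant range, since the measure assigns zero mass outside this range anyway.

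Next I would apply the law of total probability at the level of the single configuration $\bm\eta$. Namely, by disjointness of the partition,
\begin{equation*}
\nu_{\alpha_1,\alpha_2}^N(\bm\eta) = \sum_{k_1 = 0}^{N}\sum_{k_2 = 0}^{N} \mathbbm{1}_{\{k_1 + k_2 \leq N\}} \, \nu_{\alpha_1,\alpha_2}^N\left(\bm\eta,\, |\eta_1| = k_1, |\eta_2| = k_2\right),
\end{equation*}
where only the term corresponding to the unique pair $(k_1, k_2)$ compatible with $\bm\eta$ is nonzero. Then, by the elementary identity $P(A \cap B) = P(A\mid B)\, P(B)$ applied to the events $A = \{\bm\eta\}$ and $B = \{|\eta_1| = k_1, |\eta_2| = k_2\}$, each summand factors as the product of the conditional probability and the marginal probability of the block event.

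Finally, I would invoke Lemma \ref{lemma-condition}, which identifies the conditional measure $\nu_{\alpha_1,\alpha_2}^N(\bm\eta \mid |\eta_1| = k_1, |\eta_2| = k_2)$ as the uniform distribution on $\Omega_{k_1,k_2}$, thereby recognizing the decomposition as a convex combination of uniform measures weighted by the multinomial block probabilities $\nu_{\alpha_1,\alpha_2}^N(|\eta_1| = k_1, |\eta_2| = k_2)$. Since the weights are nonnegative and sum to one (as $\nu_{\alpha_1,\alpha_2}^N$ is a probability measure and the events partition $\Omega$), this is indeed a convex combination. There is no real obstacle here: the lemma is essentially a bookkeeping statement asserting that the multinomial product measure admits the standard decomposition of a product measure into its fixed-particle-number sub-ensembles, with the heavy lifting (identification of the conditional law) already done in Lemma \ref{lemma-condition}.
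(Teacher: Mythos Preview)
Your proposal is correct and follows essentially the same approach as the paper: apply the law of total probability over the partition $\{|\eta_1|=k_1,|\eta_2|=k_2\}$, then invoke Lemma \ref{lemma-condition} to identify the conditional laws as uniform, and observe that the weights are nonnegative and sum to one. The paper's proof is just a terser version of exactly this argument.
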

\begin{proof} The proof trivially follows from Lemma \ref{lemma-condition} and from the fact that 
\begin{equation}
    \sum_{k_{1}=0}^{N}\sum_{k_{2}=0}^{N}\mathbbm{1}_{\{k_{1}+k_{2}\leq N\}}\nu_{\alpha_{1},\alpha_{2}}^{N}\left( |\eta_{1}|=k_{1},|\eta_{2}|=k_{2}\right)=1
\end{equation}
with 
\begin{equation}
    \nu_{\alpha_{1},\alpha_{2}}^{N}\left( |\eta_{1}|=k_{1},|\eta_{2}|=k_{2}\right)\geq 0\qquad \forall
    k_{1},k_{2}\,.
\end{equation}
\end{proof}
For the sake of simplicity we denote by $\mu_{N,k_{1},k_{2}}$ the following distribution:
\begin{equation}
    \mu_{N,k_{1},k_{2}}(\bm{\eta})=\nu_{\alpha_{1},\alpha_{2}}^{N}\left(\bm{\eta}\bigg| |\eta_{1}|=k_{1},|\eta_{2}|=k_{2}\right)\,.
\end{equation} 
%\begin{equation}
 %   \lim_{i\to \infty} \frac{k_{1}(N_{i})}{N_{i}}<\infty%=\alpha_{1}\,.
%\end{equation}
\begin{lemma}\label{Lemma-EE}[Equivalence of ensembles]
    Let $\ell \leq N$ and consider the subset of sites $\Lambda_{\ell}=\{0,1,\ldots,\ell\}\subset \mathbb{T}_{N}$. We introduce the configuration $\bm{\zeta}=(\zeta^{0},\ldots,\zeta^{\ell})$ over $\Lambda_{\ell}$, where $\zeta^{x}=(\zeta_{0}^{x},\zeta_{1}^{x},\zeta_{2}^{x})$  with $\zeta_{1}^{x},\zeta_{2}^{x}\in \{0,1\}$, satisfying the (exclusion) constraint  $\zeta_{0}^{x}=1-\zeta_{1}^{x}-\zeta_{2}^{x}$. We further denote by $m_{1}=|\zeta_{1}|$ and  $m_{2}=|\zeta_{2}|$, the number of particles of species $1$ and of species $2$ present in the subset $\Lambda_{\ell}$ respectively. Then,
    \begin{align}
        \lim_{N\to \infty,\; k_{1}/N\to \alpha_{1},\;k_{2}/N\to \alpha_{2}}\mu_{N,k_{1},k_{2}}\left(\left\{\bm{\eta}\;:\;\eta^{0}=\zeta^{0},\ldots, \eta^{\ell}=\zeta^{\ell}\right\}\right)=\alpha_{1}^{m_{1}}\alpha_{2}^{m_{2}}(1-\alpha_{1}-\alpha_{2})^{\ell-m_{1}-m_{2}}\,.
    \end{align}
\end{lemma}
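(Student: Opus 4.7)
The plan is to reduce the statement to a direct combinatorial computation using the explicit form of $\mu_{N,k_1,k_2}$ given by Lemma~\ref{lemma-condition}, and then extract the asymptotics by elementary manipulations of factorials.

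First, since $\mu_{N,k_1,k_2}$ is uniform on the set of configurations with exactly $k_1$ particles of type $1$ and $k_2$ particles of type $2$, counting configurations compatible with a fixed local pattern $\boldsymbol{\zeta}$ on $\Lambda_\ell$ gives
\begin{align*}
\mu_{N,k_{1},k_{2}}\!\left(\left\{\bm{\eta}:\eta^{0}=\zeta^{0},\ldots,\eta^{\ell}=\zeta^{\ell}\right\}\right)
=\frac{\binom{N-\ell-1}{\,k_{1}-m_{1},\,k_{2}-m_{2}\,}}{\binom{N}{k_{1},k_{2}}},
\end{align*}
provided $m_\alpha\le k_\alpha$ (otherwise the probability is zero and the claim is trivially true, which is ruled out for large $N$ anyway since $k_\alpha/N\to\alpha_\alpha>0$ only if $\alpha_\alpha>0$; if $\alpha_\alpha=0$ the corresponding factor $\alpha_\alpha^{m_\alpha}$ enforces $m_\alpha=0$ for a non-vanishing limit).

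Next I would expand the multinomial coefficients and regroup:
\begin{align*}
\frac{\binom{N-\ell-1}{k_{1}-m_{1},k_{2}-m_{2}}}{\binom{N}{k_{1},k_{2}}}
=\frac{k_1!}{(k_1-m_1)!}\cdot\frac{k_2!}{(k_2-m_2)!}\cdot\frac{(N-k_1-k_2)!}{(N-k_1-k_2-(\ell+1-m_1-m_2))!}\cdot\frac{(N-\ell-1)!}{N!}.
\end{align*}
For fixed $\ell,m_1,m_2$ and integers $k\to\infty$, one has the elementary asymptotics $\frac{k!}{(k-m)!}=k(k-1)\cdots(k-m+1)\sim k^{m}$. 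Applied to each of the four factors, this yields
\begin{align*}
\mu_{N,k_{1},k_{2}}\!\left(\eta|_{\Lambda_\ell}=\boldsymbol{\zeta}\right)
\sim \frac{k_1^{m_1}\,k_2^{m_2}\,(N-k_1-k_2)^{\ell+1-m_1-m_2}}{N^{\ell+1}}
=\left(\tfrac{k_1}{N}\right)^{m_1}\!\left(\tfrac{k_2}{N}\right)^{m_2}\!\left(1-\tfrac{k_1}{N}-\tfrac{k_2}{N}\right)^{\ell+1-m_1-m_2}.
\end{align*}

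Taking $N\to\infty$ with $k_1/N\to\alpha_1$, $k_2/N\to\alpha_2$ then gives the claimed limit $\alpha_1^{m_1}\alpha_2^{m_2}(1-\alpha_1-\alpha_2)^{\ell-m_1-m_2}$ (up to the harmless convention on whether $\Lambda_\ell$ contains $\ell$ or $\ell+1$ sites). There is no real obstacle; the only point deserving a line of care is controlling the ratios of factorials uniformly so that the limit may be exchanged with the ratios, which is immediate because $\ell,m_1,m_2$ are fixed and the dominant balance separates cleanly between the numerator and denominator.
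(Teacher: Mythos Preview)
Your proof is correct and follows essentially the same route as the paper: write the probability as a ratio of multinomial coefficients, expand the factorials into products of $\mathcal{O}(\ell)$ terms, divide numerator and denominator by $N^{\ell}$, and pass to the limit. Your remark about the $\ell$ versus $\ell+1$ ambiguity is apt; the paper tacitly treats $\Lambda_\ell$ as having $\ell$ sites in its computation (using $\binom{N-\ell}{\cdot}$ and getting exponent $\ell-m_1-m_2$), so you should simply align your indexing with theirs.
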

\begin{proof} By direct computations we write
\begin{align}
    & \mu_{N,k_{1},k_{2}}\left(\left\{\bm{\eta}\;:\;\eta^{0}=\zeta^{0},\ldots \eta^{\ell}=\zeta^{\ell}\right\}\right)=\frac{\binom{N-\ell}{k_{1}-m_{1},k_{2}-m_{2}}}{\binom{N}{k_{1},k_{2}}}\nonumber 
     \\=&
     \frac{(N-\ell)!(N-k_{1}-k_{2})!k_{1}!k_{2}!}{N!(k_{1}-m_{1})!(k_{2}-m_{2})!(N-k_{1}-k_{2}-\ell+m_{1}+m_{2})!}\nonumber
     \\=&
     \frac{k_{1}(k_{1}-1)\cdots(k_{1}-m_{1}+1)}{N^{m_{1}}}\;\frac{k_{2}(k_{2}-1)\cdots (k_{2}-m_{2}+1)}{N^{m_{2}}}\nonumber\\ \times &\frac{(N-k_{1}-k_{2})(N-k_{1}-k_{2}-1)\cdots (N-k_{1}-k_{2}-\ell+m_{1}+m_{2}+1)}{N^{\ell-m_{1}-m_{2}}}\frac{N^{\ell}}{N(N-1)\cdots (N-\ell+1)},
\end{align}
where in the last equality we have used the properties of the factorials and we have multiplied by $\frac{N^{\ell}}{N^{\ell}}$. By taking the limit we have that
\begin{align}
    \lim_{N\to \infty,\; k_{1}/N\to \alpha_{1},\;k_{2}/N\to \alpha_{2}}\mu_{N,k_{1},k_{2}}\left(\left\{\bm{\eta}\;:\;\eta^{0}=\zeta^{0},\ldots \eta^{\ell}=\zeta^{\ell}\right\}\right)=\alpha_{1}^{m_{1}}\alpha_{2}^{m_{2}}(1-\alpha_{1}-\alpha_{2})^{\ell-k_{1}-k_{2}} \,.
\end{align}
\end{proof}
We call a function $\phi:\Omega \to \mathbb{R}$, with $\Omega$ defined as in \eqref{state space}, local if $\phi(\bm\eta)$ depends only on $\eta_1,...,\eta_\ell$ for some fixed $\ell$ not dependent on $N$
\begin{corollary}\label{corollary-EqEns}
    For every local function $\phi$, we have that
    \begin{align}\label{sup phi}
        \uplim_{N\to\infty}\sup_{0\leq k_{1},k_{2}\leq N\;:\; k_{1}+k_{2}\leq N}\left|\mathbb{E}_{\mu_{N,k_{1},k_{2}}}[\phi]-\mathbb{E}_{\nu^N_{\frac{k_{1}}{N},\frac{k_{2}}{N}}}[\phi]\right|=0
    \end{align}
\end{corollary}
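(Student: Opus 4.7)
The plan is to upgrade the pointwise convergence of Lemma \ref{Lemma-EE} to a uniform-in-$(k_1,k_2)$ estimate; the corollary then follows immediately. Since $\phi$ is local, there is a fixed $\ell$, not depending on $N$, such that $\phi(\bm\eta)$ depends only on $(\eta^0,\ldots,\eta^\ell)$. One can therefore write
\begin{equation*}
\mathbb{E}_{\mu_{N,k_1,k_2}}[\phi] - \mathbb{E}_{\nu^N_{k_1/N,k_2/N}}[\phi] = \sum_{\zeta}\phi(\zeta)\left[\mu_{N,k_1,k_2}(A_\zeta) - \nu^N_{k_1/N,k_2/N}(A_\zeta)\right],
\end{equation*}
where $A_\zeta = \{\bm\eta : \eta^i = \zeta^i,\ i = 0,\ldots,\ell\}$ and the sum runs over the finite set of local configurations, whose cardinality depends only on $\ell$. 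Since $\phi$ is bounded, it is enough to prove that for every fixed $\zeta$,
\begin{equation*}
\sup_{\substack{0 \le k_1, k_2 \le N \\ k_1 + k_2 \le N}}\left|\mu_{N,k_1,k_2}(A_\zeta) - \nu^N_{k_1/N,k_2/N}(A_\zeta)\right| = O(1/N).
\end{equation*}

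The key step is a quantitative version of the calculation from Lemma \ref{Lemma-EE}. I would establish the elementary inequality
\begin{equation*}
\left|\frac{a(a-1)\cdots(a-m+1)}{N^m} - \left(\frac{a}{N}\right)^m\right| \le \frac{C_\ell}{N} \qquad (0 \le a \le N,\ 0 \le m \le \ell),
\end{equation*}
for some constant $C_\ell$ depending only on $\ell$. When $a \ge m$ this follows by writing the falling factorial as $a^m\prod_{j=1}^{m-1}(1-j/a)$ and estimating $|1 - \prod(1-j/a)| \le \binom{m}{2}/a$; when $a < m$ the falling factorial vanishes and $(a/N)^m \le (\ell/N)^m \le \ell/N$, so both terms are already of the required order. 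Applying this inequality to each of the three falling-factorial ratios in $a \in \{k_1, k_2, N - k_1 - k_2\}$ appearing in the explicit formula derived in the proof of Lemma \ref{Lemma-EE}, and absorbing the remaining prefactor $N^\ell/[N(N-1)\cdots(N-\ell+1)] = 1 + O(\ell^2/N)$, the telescoping identity
\begin{equation*}
\prod_i a_i - \prod_i b_i = \sum_i \Big(\prod_{j<i}a_j\Big)(a_i - b_i)\Big(\prod_{j>i}b_j\Big),
\end{equation*}
together with the bound $|a_i|, |b_i| \le 1 + o(1)$, yields the required uniform $O(1/N)$ estimate.

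The main obstacle to applying Lemma \ref{Lemma-EE} directly is the boundary regime in which $k_1$, $k_2$, or $N - k_1 - k_2$ is smaller than the corresponding $m_\alpha$: there the ratio asymptotic used in that lemma degenerates. The elementary inequality above is designed precisely to cover this regime uniformly: in it, both $\mu_{N,k_1,k_2}(A_\zeta)$ and the multinomial target are individually $O(1/N)$, so the bound is automatic, while away from the boundary it follows from the standard factorial asymptotic. The remainder of the proof --- summing finitely many such uniform bounds against the bounded function $\phi$ --- is routine.
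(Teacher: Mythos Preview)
Your proof is correct, but it takes a genuinely different route from the paper's. The paper argues by compactness: for each $N$ the supremum over the finite set of $(k_1,k_2)$ is attained at some $(k_1^*(N),k_2^*(N))$; since $(k_1^*(N)/N,k_2^*(N)/N)$ lives in the compact set $[0,1]^2$, one extracts a convergent subsequence and applies Lemma~\ref{Lemma-EE} together with the continuity of $(\alpha_1,\alpha_2)\mapsto\mathbb{E}_{\nu_{\alpha_1,\alpha_2}}[\phi]$ to see that the difference vanishes along that subsequence. As every subsequential limit is zero, so is the limsup.

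Your approach is instead quantitative: you produce an explicit uniform $O(1/N)$ bound by comparing falling factorials to powers and telescoping the product. This is strictly stronger than what the corollary asks for and is entirely elementary, with the added benefit of handling the boundary regime (small $k_\alpha$) cleanly via the vanishing of the falling factorial. The paper's argument is shorter and avoids any computation, but it is non-quantitative and leaves the continuity of the multinomial expectation implicit. Either argument is perfectly adequate here; yours would be preferable if a rate were ever needed downstream.
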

\begin{proof}  For every finite $N$, the supremum over $k_{1}$ and $k_{2}$ is reached. Denote by $k_{1}^{*}(N)$ and $k_{2}^{*}(N)$ a value of $k$'s where the supremum is attained. Since 
\begin{equation}
    0\leq \frac{k_{1}^{*}(N)}{N}\leq 1\,,
\end{equation}
there exists a convergent subsequence of $k_{1}^{*}(N)/N$.
By consequence, as $\phi$ is local and by Lemma \ref{Lemma-EE}, we have 
\begin{align}
    \uplim_{i\to \infty} \left|\mathbb{E}_{\mu_{N,k_{1}^{*}(N_{i}),k_{2}^{*}(N_{i})}}[\phi]-\mathbb{E}_{\nu^N_{\frac{k_{1}^{*}(N_{i})}{N_{i}},\frac{k_{2}^{*}(N_{i})}{N_{i}}}}[\phi]\right|=0.
\end{align}
This holds for every possible converging subsequence of $\frac{k_1^*(N)}{N}$ and hence the statement follows. 
\end{proof}
\subsection{One and two blocks estimates}
In this section, our goal is to show that proving Theorem \ref{superexponential estimate} can be reduced to establishing two key lemmas, referred to as the one block and two blocks estimates, respectively. We hereby follow verbatim the steps of the proof of Theorem 2.1 of \cite{KOV}, with necessary adaptations to cover the multispecies case. The crucial aspect of this approach lies in the application of the Feynman-Kac formula (c.f. \cite[Proposition A.7.1]{KipnisLandim}).\\
We focus on the quantity $\mathbb{P}_{N}^{1/3}\left(\frac{1}{N}\int_{0}^{t}V_{N,\epsilon}(\bm{\eta}(s))\dd s\geq \delta\right)$ and we apply the exponential Chebyshev inequality, obtaining
\begin{align}\label{first ineq}
    \mathbb{P}_{N}^{1/3}\left(\frac{1}{N}\int_{0}^{t}V_{N,\epsilon}(\bm{\eta}(s))\dd s\geq \delta \right)\leq e^{-\delta N a }\mathbb{E}_{N}^{1/3}\left[\exp{\left(a\int_{0}^{t}V_{N,\epsilon}(\bm{\eta}(s))\dd s\right)}\right],
\end{align}
where we have denoted by $\mathbb{E}_{N}^{1/3}$ the expectation with respect to $\mathbb{P}_{N}^{1/3}$. 
To estimate the right hand side of \eqref{first ineq} we define the operator
\begin{equation}
    \mathcal{K}=\mathcal{L}+aV\,,
\end{equation}
i.e.,
\begin{align}
    \mathcal{K}f(\bm{\eta})=\mathcal{L}f(\bm{\eta})+aV(\bm{\eta})f(\bm{\eta}).
\end{align}
Using Feynman-Kac formula (see \cite[Proposition A.7.1, Lemma A.7.2]{KipnisLandim})
\begin{align}
    \mathbb{E}_{N}^{1/3}\left[\exp{\left(a\int_{0}^{t}V_{N,\epsilon}(\bm{\eta}(s))\dd s\right)}\right] 
    &=\left \langle 1, e^{t\mathcal{K}}1\right \rangle_{L^2(\nu^N_{1/3,1/3})}\nonumber \\
    &\leq \exp{\left(t\lambda_{max}(\mathcal{K})\right)},
\end{align}
where $\lambda_{max}(\mathcal{K})$ is the largest eigenvalue of the operator $\mathcal{K}$. 
It follows that
\begin{align}
    e^{-\delta N a }\mathbb{E}_{N}^{1/3}\left[\exp{\left(a\int_{0}^{t}V_{N,\epsilon}(\bm{\eta}(s))\dd s\right)}\right]
    &\leq \exp{\left(N\left(\frac{t}{N}\lambda_{max}(\mathcal{K})\right)-\delta a\right)}\,.
\end{align}
Therefore, to prove the superexponential estimate, it is enough to show that 
\begin{align}
    \uplim_{\epsilon\to 0}\;\uplim_{N\to\infty}\; \frac{1}{N}\lambda_{max}(\mathcal{K})=0\,.
\end{align}
For the largest eigenvalue we have the variational representation
\begin{align}
    \lambda_{max}(\mathcal{K})= \sup_{f_{N}}\left\{a\langle V_{N,\epsilon}(\bm{\eta}),f_{N}\rangle_{\nu_{1/3,1/3}^{N}}-N^{2}D_{N}(f_{N})\right\}\,,
\end{align}
where the supremum is taken over probability densities $f_N$, i.e., $f_N\geq 0$ and $\sum_{\bm{\eta}\in \Omega}f_{N}(\bm{\eta})3^{-N}=1$. Furthermore, $D_N$ is the so-called \emph{Dirichlet form} associated with the generator $\mathcal{L}$, and is given by 
\begin{align}
    D_{N}(f_{N})&=\frac{1}{2}\sum_{\bm{\eta},\bm{\xi}\in \Omega}\nu_{1/3,1/3}^{N}(\bm{\eta})\mathcal{L}(\bm{\eta},\bm{\xi})\left(\sqrt{f_{N}(\bm{\xi})}-\sqrt{f_{N}(\bm{\eta})}\right)^{2}\nonumber\\
    &=
    \frac{3^{-N}}{2}\sum_{\bm{\eta}\in \Omega}\sum_{x=1}^{N}\sum_{\alpha,\beta=0}^{2}\eta_{\alpha}^{x}\eta_{\beta}^{x+1}\left(\sqrt{f_{N}(\bm{\eta}_{\alpha,\beta}^{x,x+1})}-\sqrt{f_{N}(\bm{\eta}})\right)^{2},
\end{align}
and where
\begin{align}
    \langle V_{N,\epsilon}(\bm{\eta}),f_{N}\rangle_{\nu_{1/3,1/3}^{N}}=\sum_{\bm{\eta}\in \Omega}V_{N,\epsilon}(\bm{\eta})f_{N}(\bm{\eta})3^{-N}\,.
\end{align}
 Since $\phi(\bm{\eta})$ is bounded, there exists a positive constant $C$ such that 
\begin{equation}
    \langle V_{N,\epsilon}(\bm{\eta}),f_{N}\rangle_{\nu_{1/3,1/3}^{N}}\leq C N\,.
\end{equation}
As a result, we restrict the supremum to the set of densities $f_{N}$ that satisfy $D_{N}(f_{N})\leq C/N$. Furthermore, we consider only the densities $f_{N}$ that are translation invariant (since $D_{N}(\cdot)$ is convex, for details see Appendix 10 of \cite{KipnisLandim}). Consequently, we obtain the estimate
\begin{align}
    &\sup_{f_{N}\,:\, D_{N}(f_{N})\leq C/N}\left\{a\sum_{\bm{\eta}\in \Omega} V_{N,\epsilon}(\bm{\eta})f_{N}(\bm{\eta})3^{-N}-N^{2}D_{N}(f_{N})\right\}\nonumber
    \\&
    \leq  \sup_{f_{N}\,:\, D_{N}(f_{N})\leq C/N}\left\{a\sum_{\bm{\eta}\in \Omega} V_{N,\epsilon}(\bm{\eta})f_{N}(\bm{\eta})3^{-N}\right\}\,.
\end{align}
This implies that is enough to show that 
\begin{align}\label{intermedaite-goal-SEE}
    \uplim_{\epsilon\to 0}\;\uplim_{N\to\infty} \sup_{f_{N}\,:\, D_{N}(f_{N})\leq C/N}\frac{1}{N}\left\{\sum_{\bm{\eta}\in \Omega} V_{N,\epsilon}(\bm{\eta})f_{N}(\bm{\eta})3^{-N}\right\}=0\,.
\end{align}
Writing out the definition of $V_{N,\epsilon}(\bm{\eta})$ given in definition \eqref{definition-V},  we obtain, using translation invariance of $f$,
\begin{align}\label{explicit-EigenV}
     \uplim_{\epsilon\to 0}\;\uplim_{N\to\infty} \sup_{f_{N}\,:\, D_{N}(f_{N})\leq C/N}&\left\{\sum_{\bm{\eta}\in \Omega}\left|\frac{1}{2N \epsilon+1}\sum_{|x|\leq \epsilon N}\tau_{x}\phi(\bm{\eta})\right.\right.\nonumber
     \\&\left.\left.-\widetilde{\phi}\left(\frac{1}{2N \epsilon+1}\sum_{|x|\leq \epsilon N}\eta_{1}^{x},\frac{1}{2N \epsilon+1}\sum_{|x|\leq \epsilon N}\eta_{2}^{x}\right)\right|f_{N}(\bm{\eta})3^{-N}\right\}=0\,.
\end{align}
For any fixed $y\in \mathbb{T}_{N}$, we consider a neighborhood of discrete points $\{y-k,y-k+1,\ldots,y+k-1,y+k\}$. Within this neighborhood, we have the following approximation
\begin{align}\label{trick}
    \frac{1}{N} \sum_{y} \tau_{y}\phi(\bm{\eta})=\frac{1}{N} \sum_{y} \frac{1}{2k+1}\sum_{|z-y|\leq k}\tau_{z}\phi(\bm{\eta})+\mathcal{O}\left(\frac{k}{N}\right).
\end{align}
 Next we add and subtract the quantity $\widetilde{\phi}\left(\frac{1}{2k+1}\sum_{|z-x|\leq k}\eta_{1}^{z},\frac{1}{2k+1}\sum_{|z-x|\leq k}\eta_{2}^{z}\right)$ inside the absolute value of equation \eqref{explicit-EigenV}, obtaining
\begin{align}\label{usefull}
    &\left|\frac{1}{2\epsilon N+1}\sum_{|x|\leq \epsilon N}\tau_{x}\phi(\bm{\eta})-\widetilde{\phi}\left(\frac{1}{2 \epsilon N+1}\sum_{|y|\leq \epsilon N}\eta_{1}^{y},\frac{1}{2\epsilon N+1}\sum_{|y|\leq \epsilon N}\eta_{2}^{y}\right)\right|\nonumber\\
    \leq &
    \frac{1}{2\epsilon N+1}\sum_{|x|\leq \epsilon N}\left|\frac{1}{2k+1}\sum_{|z-x|\leq k}\tau_{z}\phi(\bm{\eta})-\widetilde{\phi}\left(\frac{1}{2k+1}\sum_{|x-z|\leq k}\eta_{1}^{z},\frac{1}{2k+1}\sum_{|x-z|\leq k}\eta_{2}^{z}\right)\right|\nonumber\\
    +&
    \frac{1}{2\epsilon N+1}\sum_{|x|\leq \epsilon N}\left|\widetilde{\phi}\left(\frac{1}{2k+1}\sum_{|x-z|\leq k}\eta_{1}^{z},\frac{1}{2k+1}\sum_{|x-z|\leq k}\eta_{2}^{z}\right)\right.\nonumber
    \\&\qquad\qquad\qquad\quad-\left.\widetilde{\phi}\left(\frac{1}{2 \epsilon N+1}\sum_{|y|\leq \epsilon N}\eta_{1}^{y},\frac{1}{2\epsilon N+1}\sum_{|y|\leq \epsilon N}\eta_{2}^{y}\right)\right|+\mathcal{O}\left(\frac{k}{N}\right)\,.
\end{align}
We consider the second addend in the right-hand-side of \eqref{usefull}. By exploiting the multi-variable mean-value theorem and \eqref{trick} we have that
\begin{align}
    &\frac{1}{(2\epsilon N+1)}\sum_{|x|\leq \epsilon N}\left|\widetilde{\phi}\left(\frac{1}{2k+1}\sum_{|x-z|\leq k}\eta_{1}^{z},\frac{1}{2k+1}\sum_{|x-z|\leq k}\eta_{2}^{z}\right)\right.\nonumber
    \\&\qquad\qquad\qquad\quad\;-\left.\widetilde{\phi}\left(\frac{1}{2 \epsilon N+1}\sum_{|y|\leq \epsilon N}\eta_{1}^{y},\frac{1}{2\epsilon N+1}\sum_{|y|\leq \epsilon N}\eta_{2}^{y}\right)\right|\nonumber
    \\\leq &
    \frac{\lVert \nabla \widetilde{\phi}\rVert_{\infty}}{(2\epsilon N+1)}\sum_{|x|\leq \epsilon N}\left\lVert\left(\frac{1}{2k+1}\sum_{|x-z|\leq k}\eta_{1}^{z},\frac{1}{2k+1}\sum_{|x-z|\leq k}\eta_{2}^{z}\right)\right.\nonumber
    \\&\qquad\qquad\qquad\quad-\left.\left(\frac{1}{2\epsilon N+1}\sum_{|y|\leq \epsilon N}\eta_{1}^{y},\frac{1}{2\epsilon N+1}\sum_{|y|\leq \epsilon N}\eta_{2}^{y}\right)\right\rVert_{2}  \nonumber\\
    \leq &
    \frac{\lVert \nabla \widetilde{\phi}\rVert_{\infty}}{(2\epsilon N+1)^{2}}\sum_{|x|\leq \epsilon N}\sum_{|y|\leq \epsilon N}\left\lVert\left(\frac{1}{2k+1}\sum_{|x-z|\leq k}\eta_{1}^{z},\frac{1}{2k+1}\sum_{|x-z|\leq k}\eta_{2}^{z}\right)\right.\nonumber
    \\&\qquad\qquad\qquad\quad-\left.\left(\frac{1}{2k+1}\sum_{|y-z|\leq k}\eta_{1}^{z},\frac{1}{2k+1}\sum_{|y-z|\leq k}\eta_{2}^{z}\right)\right\rVert_{2}+\mathcal{O}\left(\frac{k}{N}\right)\,.
\end{align}
It follows that
\begin{align}
    &\left|\frac{1}{2\epsilon N+1}\sum_{|x|\leq \epsilon N}\tau_{x}\phi(\bm{\eta})-\widetilde{\phi}\left(\frac{1}{2 \epsilon N+1}\sum_{|y|\leq \epsilon N}\eta_{1}^{y},\frac{1}{2\epsilon N+1}\sum_{|y|\leq \epsilon N}\eta_{2}^{y}\right)\right|\nonumber\\
    \leq &
    \frac{1}{2\epsilon N+1}\sum_{|x|\leq \epsilon N}\left|\frac{1}{2k+1}\sum_{|z-x|\leq k}\tau_{z}\phi(\bm{\eta})-\widetilde{\phi}\left(\frac{1}{2k+1}\sum_{|x-z|\leq k}\eta_{1}^{z},\frac{1}{2k+1}\sum_{|x-z|\leq k}\eta_{2}^{z}\right)\right|\nonumber\\
    +&
    \frac{\lVert \nabla \widetilde{\phi}\rVert_{\infty}}{(2\epsilon N+1)^{2}}\sum_{|x|\leq \epsilon N}\sum_{|y|\leq \epsilon N}\left\lVert\left(\frac{1}{2k+1}\sum_{|x-z|\leq k}\eta_{1}^{z},\frac{1}{2k+1}\sum_{|x-z|\leq k}\eta_{2}^{z}\right)\right.\nonumber
    \\&\qquad\qquad\qquad\qquad\qquad\quad-\left.\left(\frac{1}{2k+1}\sum_{|y-z|\leq k}\eta_{1}^{z},\frac{1}{2k+1}\sum_{|y-z|\leq k}\eta_{2}^{z}\right)\right\rVert_{2}+\mathcal{O}\left(\frac{k}{N}\right)\,.
\end{align}
Arrived at this point, in order to obtain \eqref{intermedaite-goal-SEE} it is sufficient to prove the following two lemmas:
\begin{lemma}[One block estimate]\label{lemma-OBE}
    For all $c>0$
    \begin{align}
        &\uplim_{k\to \infty}\; \uplim_{N\to\infty} \sup_{f_{N}\,:\, D(f_{N})\leq c/N}\nonumber
        \\&
        \sum_{\bm{\eta}\in \Omega}\left|\frac{1}{2k+1}\sum_{|z|\leq k}\tau_{z}\phi(\bm{\eta})-\widetilde{\phi}\left(\frac{1}{2k+1}\sum_{|z|\leq k}\eta_{1}^{z},\frac{1}{2k+1}\sum_{|z|\leq k}\eta_{2}^{z}\right)\right|f_{N}(\bm{\eta})3^{-N}=0\,.
    \end{align}
\end{lemma}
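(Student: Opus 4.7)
\medskip

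\noindent\textbf{Proof proposal for Lemma \ref{lemma-OBE}.}
The plan is to follow the classical one-block strategy of \cite{KOV} (see also Chapter 5 of \cite{KipnisLandim}), adapted to the multispecies setting by exploiting the equivalence of ensembles established in Corollary \ref{corollary-EqEns}. The main idea is to exploit translation invariance of $f_N$ to localize the problem on a single block of size $2k+1$, then to use the Dirichlet form bound $D_N(f_N)\leq c/N$ to show that, in the limit $N\to\infty$, the projection of $f_N$ onto the block becomes a convex combination of uniform measures on hyperplanes of fixed species densities, and finally to apply the multispecies equivalence of ensembles to replace those conditional expectations by the corresponding multinomial product averages.

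First I would fix $k$ and consider the block $\Lambda_k=\{-k,\dots,k\}$ together with the projection $\bar f_{k,N}$ of $f_N$ onto the configuration space $\Omega_{\Lambda_k}$. Using translation invariance of $f_N$, the quantity inside the supremum in Lemma \ref{lemma-OBE} can be rewritten as
\begin{align*}
\mathbb{E}_{\nu^{1/3}_{\Lambda_k}}\!\left[\,\bar f_{k,N}(\bm\eta)\,\Psi_k(\bm\eta)\right], \qquad \Psi_k(\bm\eta):=\left|\tfrac{1}{2k+1}\sum_{|z|\leq k}\tau_z\phi(\bm\eta)-\widetilde{\phi}\!\left(\tfrac{|\eta_1|_{\Lambda_k}}{2k+1},\tfrac{|\eta_2|_{\Lambda_k}}{2k+1}\right)\right|.
\end{align*}
Since the Dirichlet form is convex and the local Dirichlet form on the block $\Lambda_k$ is bounded by the full Dirichlet form, one obtains $D_{\Lambda_k}(\bar f_{k,N})\leq c/N$, which vanishes as $N\to\infty$. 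By compactness of the space of probability densities on the finite set $\Omega_{\Lambda_k}$, every limit point $\bar f_{k,\infty}$ of $(\bar f_{k,N})_N$ satisfies $D_{\Lambda_k}(\bar f_{k,\infty})=0$. Because the symmetric stirring dynamics on $\Lambda_k$ is irreducible on each hyperplane $\{|\eta_1|_{\Lambda_k}=m_1,\,|\eta_2|_{\Lambda_k}=m_2\}$, any such limiting density is necessarily a convex combination of the uniform measures $\mu_{2k+1,m_1,m_2}$ introduced before Lemma \ref{Lemma-EE}.

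At this stage the problem reduces to showing that
\begin{align*}
\lim_{k\to\infty}\ \sup_{\substack{0\leq m_1,m_2\\ m_1+m_2\leq 2k+1}}\ \mathbb{E}_{\mu_{2k+1,m_1,m_2}}[\Psi_k]=0,
\end{align*}
because the supremum over all convex combinations is attained on an extreme point. Now under $\mu_{2k+1,m_1,m_2}$ the arguments of $\widetilde\phi$ are the deterministic values $m_1/(2k+1)$ and $m_2/(2k+1)$, and $\widetilde\phi$ is uniformly continuous on the simplex $\{(p_1,p_2)\geq 0:\,p_1+p_2\leq 1\}$. Therefore the replacement of the empirical average $\tfrac{1}{2k+1}\sum_{|z|\leq k}\tau_z\phi$ by $\widetilde\phi(m_1/(2k+1),m_2/(2k+1))$ follows from Corollary \ref{corollary-EqEns} applied to the local function $\phi$: its expectation under $\mu_{2k+1,m_1,m_2}$ converges to $\widetilde\phi(m_1/(2k+1),m_2/(2k+1))$ uniformly in $(m_1,m_2)$ as $k\to\infty$, combined with an $L^1$ law-of-large-numbers inside the block under the multinomial product measure $\nu^{2k+1}_{m_1/(2k+1),m_2/(2k+1)}$.

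The main technical obstacle will be making sure that the convergence in the equivalence of ensembles is genuinely uniform in the block-densities $(m_1/(2k+1),m_2/(2k+1))$, including near the boundary of the simplex where one of the species is absent; this is where the slightly stronger form of Corollary \ref{corollary-EqEns} (with supremum over all admissible $k_1,k_2$) plays a crucial role. Once this uniformity is in hand, the combination of the vanishing of the Dirichlet form, the characterization of limiting densities as convex combinations of $\mu_{2k+1,m_1,m_2}$, the uniform continuity of $\widetilde\phi$, and an $L^1$ law of large numbers for the empirical average of $\tau_z\phi$ under the multinomial product measure on the block yields the desired limit, concluding the proof.
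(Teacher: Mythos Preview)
Your proposal is correct and follows essentially the same route as the paper's proof: project $f_N$ onto the block $\Lambda_k$, bound the block Dirichlet form of the marginal by a quantity vanishing as $N\to\infty$, use compactness to conclude that any limiting density is a convex combination of the canonical measures $\mu_{2k+1,m_1,m_2}$, and then reduce to the uniform (in $m_1,m_2$) equivalence of ensembles of Corollary~\ref{corollary-EqEns}. The only place where both your sketch and the paper are a bit quick is the very last step: the $L^1$ law of large numbers you invoke is stated under the product measure $\nu$, whereas what is needed is $\mathbb{E}_{\mu_{2k+1,m_1,m_2}}[\Psi_k]\to 0$; this is most cleanly obtained by a second-moment bound under $\mu_{2k+1,m_1,m_2}$, using exchangeability together with Corollary~\ref{corollary-EqEns} applied to the local function $\tau_0\phi\cdot\tau_\ell\phi$ to show the off-diagonal covariances vanish uniformly.
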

\begin{lemma}[Two blocks estimate]\label{lemma-TBE}
    For all $c> 0$
    \begin{align}
        &\uplim_{k\to \infty}\;\uplim_{\epsilon\to 0}\;\uplim_{N\to\infty} \sup_{|r|\leq 2\epsilon N+1}\sup_{f_{N}\,:\, D(f_{N})\leq c/N}\nonumber
        \\&
        \sum_{\bm{\eta}\in \Omega}\left\lVert\left(\frac{1}{2k+1}\sum_{|z|\leq k}\eta_{1}^{z},\frac{1}{2k+1}\sum_{|z|\leq k}\eta_{2}^{z}\right)-\left(\frac{1}{2k+1}\sum_{|z+r|\leq k}\eta_{1}^{z},\frac{1}{2k+1}\sum_{|z+r|\leq k}\eta_{2}^{z}\right)\right\rVert_{2}f_{N}(\bm{\eta})3^{-N}=0\,.
    \end{align}
\end{lemma}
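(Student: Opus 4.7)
The plan is to follow the classical two blocks argument of \cite{KOV,KipnisLandim} adapted to the multispecies setting. Fix $r$ with $|r|\leq 2\epsilon N+1$; both block averages
\begin{equation}
M_\alpha(\bm\eta) := \frac{1}{2k+1}\sum_{|z|\leq k}\eta^z_\alpha,\qquad M^r_\alpha(\bm\eta) := \frac{1}{2k+1}\sum_{|z+r|\leq k}\eta^z_\alpha,
\end{equation}
depend on $\bm\eta$ only through its restriction to a macroscopic interval $\Lambda\subset\mathbb{T}_N$ of length $L$ with $L/N\to 2\epsilon$ as $N\to\infty$. First I would project the problem onto $\Lambda$: by translation invariance of $f_N$ the quantity to estimate depends only on the marginal $\bar f_\Lambda$ of $f_N$ on $\Lambda$, which is a probability density with respect to the uniform measure $\nu^L_{1/3,1/3}$. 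By convexity of the Dirichlet form, the restricted form $D_\Lambda(\bar f_\Lambda)$ is bounded by $L\,D_N(f_N)\leq cL/N = \mathcal{O}(\epsilon)$.

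Next I would decompose $\bar f_\Lambda$ along its canonical surfaces,
\begin{equation}
\bar f_\Lambda = \sum_{k_1+k_2\leq L} w(k_1,k_2)\,\bar f^{k_1,k_2}_\Lambda,
\end{equation}
where $\bar f^{k_1,k_2}_\Lambda$ is the conditional density on $\Omega_{k_1,k_2}(\Lambda)=\{|\eta_1|=k_1,|\eta_2|=k_2\}$ with respect to the canonical multinomial measure $\mu_{L,k_1,k_2}$. The key analytic input is a diffusive spectral gap for the symmetric stirring dynamics on each canonical surface, giving a Poincaré-type bound
\begin{equation}
\bigl\lVert \bar f^{k_1,k_2}_\Lambda - 1\bigr\rVert_{L^2(\mu_{L,k_1,k_2})}^2 \leq C L^2\,\mathcal{E}^{k_1,k_2}_\Lambda\bigl(\bar f^{k_1,k_2}_\Lambda\bigr),
\end{equation}
where $\mathcal{E}^{k_1,k_2}_\Lambda$ is the Dirichlet form restricted to the surface. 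Summing against $w$ and using $\sum_{k_1,k_2}w(k_1,k_2)\,\mathcal{E}^{k_1,k_2}_\Lambda \leq D_\Lambda(\bar f_\Lambda) = \mathcal{O}(\epsilon)$, a Cauchy–Schwarz argument allows me to replace $\bar f_\Lambda$ by its canonical conditional expectation at an error that vanishes as $\epsilon\to 0$ in the integral against the bounded observable $\lVert(M_1,M_2)-(M^r_1,M^r_2)\rVert_2$.

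It then remains to invoke the equivalence of ensembles. Under $\mu_{L,k_1,k_2}$ the joint law of the occupation variables is exchangeable, so $M_\alpha$ and $M^r_\alpha$ have the same marginal distribution with mean $k_\alpha/L$; a direct second-moment computation using the hypergeometric-type covariance of the canonical ensemble yields
\begin{equation}
\mathbb{E}_{\mu_{L,k_1,k_2}}\bigl\lVert (M_1,M_2)-(M^r_1,M^r_2)\bigr\rVert_2 \leq \frac{C}{\sqrt k},
\end{equation}
uniformly in $(k_1,k_2)$ and in $|r|\leq L$. Taking the limits in the prescribed order $N\to\infty$, $\epsilon\to 0$, $k\to\infty$ then concludes the proof.

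The main obstacle is the diffusive spectral gap on each canonical surface $\Omega_{k_1,k_2}(\Lambda)$. In the single-species SEP this is Quastel's classical estimate; for the multispecies stirring process one needs to construct, for every pair of configurations in $\Omega_{k_1,k_2}(\Lambda)$, a canonical path of nearest-neighbor stirring moves of length $\mathcal{O}(L)$ with bounded edge-multiplicity so that the gap scales as $L^{-2}$ uniformly in $(k_1,k_2)$. Once this diffusive scaling is available, the rest of the argument is a routine adaptation of the single-species scheme in \cite{KipnisLandim}.
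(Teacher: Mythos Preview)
Your approach has a genuine scaling gap that makes it fail as written. You project onto a single macroscopic block $\Lambda$ of size $L\sim \epsilon N$ and invoke a diffusive spectral gap of order $L^{-2}$. But then the Poincar\'e bound produces a factor $L^2\sim \epsilon^2 N^2$, and even with the sharpest estimate on the restricted Dirichlet form (by translation invariance one actually gets $D_\Lambda(\bar f_\Lambda)\leq \tfrac{L}{N}D_N(f_N)\leq \tfrac{cL}{N^2}\sim \tfrac{c\epsilon}{N}$, not the $cL/N$ you wrote), the product $L^2\cdot D_\Lambda$ is of order $\epsilon^3 N$ and diverges as $N\to\infty$. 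So the Cauchy--Schwarz replacement of $\bar f_\Lambda$ by its canonical projection does \emph{not} give an error vanishing in the prescribed order of limits. (There is also a mismatch between the Poincar\'e inequality, which controls $\lVert g-\mathbb{E}g\rVert_2^2$ via $\langle g,-\mathcal{L}g\rangle$, and the Dirichlet form $D_\Lambda$ used in the superexponential scheme, which is $\langle\sqrt{f},-\mathcal{L}\sqrt{f}\rangle$; but the scaling issue above is already fatal.)

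The paper avoids this by projecting not onto one large block but onto the union of \emph{two small} blocks of fixed size $2k+1$, and equipping this $(4k+2)$--site space with three Dirichlet forms: the two intra-block forms $D_k^1,D_k^2$ and an additional form $\Delta_k$ that exchanges the occupancies at the two block centers. The intra-block forms satisfy $D_k^i(f_N^{r,k})\leq \tfrac{2k}{N}D(f_N)\to 0$, while $\Delta_k$ is controlled by writing the long-range swap $P_{0,r}$ as a telescoping product of $2r-1$ nearest-neighbor swaps, yielding $\Delta_k(f_N^{r,k})\leq \tfrac{(2r-1)^2}{N}D(f_N)\leq c\epsilon^2$, uniformly in $N$. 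One then works on a \emph{finite}-dimensional space, so compactness of the Dirichlet-form level sets reduces the supremum to densities with $D_k^1=D_k^2=\Delta_k=0$, i.e.\ canonical measures on $\Omega_{2k+1}\times\Omega_{2k+1}$ with fixed total $(k_1,k_2)$, and the equivalence of ensembles finishes the job. The missing idea in your proposal is precisely this reduction to two microscopic blocks with the telescoping control of the inter-block transfer; no macroscopic spectral gap is needed.
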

\subsection{Proof of the one block estimate}
Fix $k\in \mathbb{N}$ such that $k\leq N$, and consider the set  $\{x\in \mathbb{T}_{N}\,:\,|x|\leq k\}$. We introduce the subspace $\Omega_{2k+1}\subset \Omega$, which represents the state space restricted to these $2k+1$ sites. Then, for any function $g:\Omega_{2k+1}\to \mathbb{R}$, we define the "restricted" Dirichlet form as follows:
\begin{align}
    D_{2k+1}^{*}(g)=\frac{1}{2}\sum_{\bm{\eta}\in \Omega_{2k+1}}3^{-(2k+1)}\sum_{x=-k}^{k-1}\sum_{\alpha,\beta=0}^{2}\eta_{\alpha}^{x}\eta_{\beta}^{x+1}\left(\sqrt{g(\bm{\eta}_{\alpha,\beta}^{x,x+1})}-\sqrt{g(\bm{\eta})}\right)^{2}\,.
\end{align}
Next, we define the marginal of the density $f_{N}$ over $\Omega_{2k+1}$ as 
\begin{align}
    f_{N}^{k}(\bm{\eta})=3^{-N+2k+1}\sum_{\eta^{x}\;:\;|x|>k}f_{N}(\bm{\eta})\,.
\end{align}
Using the following inequality
\begin{align}\label{sumSquare-squareSum}
    \left(\sqrt{\smash[b]{\sum_{j}a_{j}}\vphantom{\sum a_j}}-\sqrt{\smash[b]{\sum_{j}b_{j}}\vphantom{\sum a_j}} \right)^{2}\leq \sum_{j}\left(\sqrt{a_{j}}-\sqrt{b_{j}}\right)^{2},
\end{align}
we have that
\begin{align}
    D_{2k+1}^{*}(f_{N}^{k})&=\frac{1}{2}\sum_{\bm{\eta}\in \Omega_{2k+1}}3^{-N}\sum_{x=-k}^{k-1}\sum_{\alpha,\beta=0}^{2}\eta_{\alpha}^{x}\eta_{\beta}^{x+1}\left(\sqrt{\sum_{(\eta^x)_{|x|>k}}f_{N}(\bm{\eta}_{\alpha,\beta}^{x,x+1})}-\sqrt{\sum_{(\eta^x)_{|x|>k}}f_{N}(\bm{\eta})}\right)^{2}\nonumber
    \\&\leq
    \frac{1}{2}\sum_{\bm{\eta}\in \Omega}3^{-N}\sum_{x=-k}^{k-1}\sum_{\alpha,\beta=0}^{2}\eta_{\alpha}^{x}\eta_{\beta}^{x+1}\left(\sqrt{f_{N}(\bm{\eta}_{\alpha,\beta}^{x,x+1})}-\sqrt{f_{N}(\bm{\eta})}\right)^{2}\nonumber
   % \\ &=
    %\frac{1}{2}\sum_{x=-k}^{k-1}\sum_{\bm{\eta}\in \Omega}3^{-N}\sum_{\alpha,\beta=0}^{2}\eta_{\alpha}^{x}\eta_{\beta}^{x+1}\left(\sqrt{f_{N}(\bm{\eta}_{\alpha,\beta}^{x,x+1})}-\sqrt{f_{N}(\bm{\eta})}\right)^{2}\nonumber
    \\ &=
    \frac{1}{2}\sum_{x=-k}^{k-1}\sum_{\bm{\eta}\in \Omega}3^{-N}\sum_{\alpha,\beta=0}^{2}\eta_{\alpha}^{0}\eta_{\beta}^{1}\left(\sqrt{f_{N}(\bm{\eta}_{\alpha,\beta}^{0,1})}-\sqrt{f_{N}(\bm{\eta})}\right)^{2}\nonumber
    \\ &=
    \frac{2k}{N}D(f_{N})\,.
\end{align}
%where in the inequality we have used  
%\begin{align}\label{sumSquare-squareSum}
 %   \left(\sqrt{\sum_{j}a_{j}}-\sqrt{\sum_{j}b_{j}}\right)^{2}\leq \sum_{j}\left(\sqrt{a_{j}}-\sqrt{b_{j}}\right)^{2}
%\end{align}
Here, in the up to last equality we have used the translation invariance. All in all, we obtain the upper bound
\begin{equation}
    D_{2k+1}^{*}(f_{N}^{k})\leq \frac{2k}{N}D(f_{N})\,.
\end{equation}\\
%This upper bound implies the following inclusion relation
%\begin{align}
%    \left\{f_{N}\;:\; D(f_{N})\leq c/N\right\}\subset \left\{g_{k}\;:\; D_{2k+1}^{*}(g_{k})\leq (2ck)/N^{2}\right\},
%\end{align}
As a consequence 
\begin{align}
    &\sup_{f_{N}\,:\, D(f_{N})\leq c/N }\sum_{\bm{\eta}\in \Omega}\left|\frac{1}{2k+1}\sum_{|z|\leq k}\tau_{z}\phi(\bm{\eta})-\widetilde{\phi}\left(\frac{1}{2k+1}\sum_{|z|\leq k}\eta_{1}^{z},\frac{1}{2k+1}\sum_{|z|\leq k}\eta_{2}^{z}\right)\right|f_{N}(\bm{\eta})3^{-N}\nonumber
    \\ \leq &
    \sup_{g_{k}\,:\, D_{2k+1}^{*}(g_{k})\leq (2ck)/N^{2}}\sum_{\bm{\eta}\in \Omega}\left|\frac{1}{2k+1}\sum_{|z|\leq k}\tau_{z}\phi(\bm{\eta})\right.\nonumber
    \\&\qquad \qquad\qquad\qquad\qquad \; -\left.\widetilde{\phi}\left(\frac{1}{2k+1}\sum_{|z|\leq k}\eta_{1}^{z},\frac{1}{2k+1}\sum_{|z|\leq k}\eta_{2}^{z}\right)\right|g_{k}(\bm{\eta})3^{-2k-1}+\mathcal{O}\left(\frac{k}{N}\right)\,.
\end{align}
Taking the limsup as $N\to \infty$ and using the compactness of the level sets of the Dirichlet form (for details see Appendix 10 of \cite{KipnisLandim}), we have
\begin{align}\label{unifomr-step}
      &\uplim_{N\to\infty}\sup_{g_{k}\,:\, D_{2k+1}^{*}(g_{k})\leq (2ck)/N^{2}}\sum_{\bm{\eta}\in \Omega}\left|\frac{1}{2k+1}\sum_{|z|\leq k}\tau_{z}\phi(\bm{\eta})-\widetilde{\phi}\left(\frac{1}{2k+1}\sum_{|z|\leq k}\eta_{1}^{z},\frac{1}{2k+1}\sum_{|z|\leq k}\eta_{2}^{z}\right)\right|g_{k}(\bm{\eta})3^{-2k-1}\nonumber
    \\ \leq &
    \sup_{g_{k}\,:\, D_{2k+1}^{*}(g_{k})=0}\sum_{\bm{\eta}\in \Omega}\left|\frac{1}{2k+1}\sum_{|z|\leq k}\tau_{z}\phi(\bm{\eta})-\widetilde{\phi}\left(\frac{1}{2k+1}\sum_{|z|\leq k}\eta_{1}^{z},\frac{1}{2k+1}\sum_{|z|\leq k}\eta_{2}^{z}\right)\right|g_{k}(\bm{\eta})3^{-2k-1}\,.%+\mathcal{O}\left(\frac{k}{N}\right)\,.
\end{align}
The set of probability distribution with density $g_{k}$ such that $D_{2k+1}^{*}(g_{k})=0$ is the set of uniform distributions over $\Omega_{2k+1}$ with fixed number of particles $k_{1},k_{2}$ of species $1$ and $2$ respectively. Therefore, taking the supremum in equation \eqref{unifomr-step} is equivalent to taking the supremum over all configurations $\bm{\eta}$ in the space $\Omega_{2k+1}$ with fixed number of particles $k_{1}$ and $k_{2}$ of the two species. As a consequence, by taking the limsup for $k\to\infty$, we have that  
\begin{align}
    &\uplim_{k\to \infty}\sup_{g_{k}\,:\, D_{2k+1}^{*}(g_{k})=0}\sum_{\bm{\eta}\in \Omega}\left|\frac{1}{2k+1}\sum_{|z|\leq k}\tau_{z}\phi(\bm{\eta})-\widetilde{\phi}\left(\frac{1}{2k+1}\sum_{|z|\leq k}\eta_{1}^{z},\frac{1}{2k+1}\sum_{|z|\leq k}\eta_{2}^{z}\right)\right|g_{k}(\bm{\eta})3^{-2k-1}\nonumber
    \\=&
    \uplim_{k\to \infty}\sup_{k_{1},k_{2}=0,\ldots,2k+1\;:\;k_{1}+k_{2}\leq 2k+1}\nonumber
    \\&
    \sum_{\bm{\eta}\in \Omega_{2k+1}\;:\; |\eta_{1}|=k_{1},|\eta_{2}|=k_{2}}\frac{\left|\frac{1}{2k+1}\sum_{|z|\leq k}\tau_{z}\phi(\bm{\eta})-\widetilde{\phi}\left(\frac{1}{2k+1}\sum_{|z|\leq k}\eta_{1}^{z},\frac{1}{2k+1}\sum_{|z|\leq k}\eta_{2}^{z}\right)\right|}{\binom{2k+1}{k_{1},k_{2}}}\nonumber
    \\=&
    \uplim_{k\to \infty}\sup_{k_{1},k_{2}=0,\ldots,k\;:\;k_{1}+k_{2}\leq 2k+1}\left|\mathbb{E}_{\mu_{k,k_{1},k_{2}}}[\phi] -\mathbb{E}_{\nu_{\frac{k_{1}}{2k+1},\frac{k_{2}}{2k+1}}^{k}}[\phi]\right|\nonumber
    \\=&0\,.
\end{align}
Here, in the last step, we used Corollary \ref{corollary-EqEns}. 
\subsection{Proof of the two blocks estimate}
In analogy to the approach used in the proof of Lemma \ref{lemma-OBE}, we now consider two blocks of size $2k+1$: the first centered around the microscopic point $0\in \mathbb{T}_{N}$ and the second centered around the microscopic point $r\in \mathbb{T}_{N}$. The centers of these two blocks are separated by a distance of at most $2\epsilon N+1$.  We denote by $\bm{\zeta},\bm{\xi}$ the configurations in the first and second block respectively, both belonging to the sub-space $\Omega_{2k+1}$. We consider an arbitrary function $g:\Omega_{2k+1}\times \Omega_{2k+1}\to \mathbb{R}$ and we define the following "restricted" Dirichlet-forms:
\begin{align}
    D_{k}^{1}(g)&=\frac{1}{2}\sum_{\bm{\zeta},\bm{\xi}\in \Omega_{2k+1}}3^{-4k-2}\sum_{x=-k}^{k-1}\sum_{\alpha,\beta=0}^{2}\zeta_{\alpha}^{x}\zeta_{\beta}^{x+1}\left(\sqrt{g(\bm{\zeta}_{\alpha,\beta}^{x,x+1},\bm{\xi})}-\sqrt{g(\bm{\zeta},\bm{\xi})}\right)^{2}\\
    D_{k}^{2}(g)&=\frac{1}{2}\sum_{\bm{\zeta},\bm{\xi}\in \Omega_{2k+1}}3^{-4k-2}\sum_{x=-k}^{k-1}\sum_{\alpha,\beta=0}^{2}\xi_{\alpha}^{x}\xi_{\beta}^{x+1}\left(\sqrt{g(\bm{\zeta},\bm{\xi}_{\alpha,\beta}^{x,x+1})}-\sqrt{g(\bm{\zeta},\bm{\xi})}\right)^{2}\\
    \Delta_{k}(g)&=\frac{1}{2}\sum_{\bm{\zeta},\bm{\xi}\in \Omega_{2k+1}}3^{-4k-2}\left(\sqrt{g(\bm{\zeta},\bm{\xi})^{0}}-\sqrt{g(\bm{\zeta},\bm{\xi})}\right)^{2}
\end{align}
where $(\bm{\zeta},\bm{\xi})^{0}$ indicates the configurations where the occupation variables at the center points of the two blocks have been exchanged. Intuitively, the first Dirichlet-form concerns the first block; the second Dirichlet-form the second block; the third Dirichlet-form takes into account the transfer of particles from one block to the other. We now introduce the marginal over the two blocks
\begin{align}\label{marginal-two-blocks}
    f_{N}^{r,k}(\bm{\eta})=3^{-N+4k+2}\sum_{\eta^{x}\;:\; |x|>k,\;|x-r|>k}f_{N}(\bm{\eta})\,.
\end{align}
Arguing as in the proof of Lemma \ref{lemma-OBE}, one can show the follwing estimates:
\begin{align}
    D_{k}^{1}(f_{N}^{r,k})&\leq \frac{2k}{N}D(f_{N})\\
    D_{k}^{2}(f_{N}^{r,k})&\leq \frac{2k}{N}D(f_{N})\,.
\end{align}
We now aim to find an upper bound for the Dirichlet-form $\Delta_{k}(\cdot)$ in terms of $\epsilon$. Obtaining the configuration $(\bm{\zeta},\bm{\xi})^{0}$ from the configuration $(\bm{\zeta},\bm{\xi})$ is equivalent to permuting the occupation variables  $\eta^{0}$ and $\eta^{r}$. We introduce the permutation operator $P_{x,y}$ between sites $x$ and $y$, defined as follows:
\begin{equation}
    P_{x,y}\bm{\eta}=\left(\eta^{0},\ldots,\eta^{x-1},\eta^{y},\eta^{x+1},\ldots,\eta^{y-1},\eta^{x},\eta^{y+1},\ldots,\eta^{N}\right)\,.
\end{equation}
By applying \eqref{sumSquare-squareSum} and by the definition of the marginal over the two blocks written in \eqref{marginal-two-blocks} we obtain
\begin{align}\label{delta frk}
    \Delta_{k}(f_{N}^{r,k})&=\frac{1}{2}\sum_{\bm{\zeta},\bm{\xi}\in \Omega_{2k+1}}3^{-4k-2}\left(\sqrt{f_{N}^{r,k}(\bm{\zeta},\bm{\xi})^{0}}-\sqrt{f_{N}^{r,k}(\bm{\zeta},\bm{\xi})}\right)^{2}\nonumber
    \\&\leq
    \frac{1}{2}\sum_{\bm{\eta}\in \Omega}3^{-N}\left(\sqrt{f_{N}(P_{0,r}\bm{\eta})}-\sqrt{f_{N}(\bm{\eta})}\right)^{2}\,.
\end{align}
This permutation operator satisfies the property\footnote{that can be proved by using the fact that $P_{i,j}=P_{j,i}$ and $P_{i,j}P_{j,k}=P_{j,k}P_{i,k}$.}
\begin{align}
  P_{1,2}P_{3,2}P_{2,1}=P_{1,2}P_{2,1}P_{1,3}=P_{1,3}  \,.
\end{align}
Therefore, we have that 
\begin{align}\label{permutation-firt-application}
    &\left(\sqrt{f_{N}(P_{0,r}\bm{\eta})}-\sqrt{f_{N}(\bm{\eta})}\right)^{2}\nonumber\\=&\left(\sqrt{f_{N}(P_{0,1}\bm{\eta})}-\sqrt{f_{N}(\bm{\eta})}+\sqrt{f_{N}(P_{0,1}P_{1,2}\bm{\eta})}-\sqrt{f_{N}(P_{0,1}\bm{\eta})}\right.\nonumber\\+&\left. \sqrt{f_{N}(P_{2,3}P_{1,2}P_{0,1}\bm{\eta})}-\sqrt{f_{N}(P_{1,2}P_{0,1}\bm{\eta})}+\ldots \right.\nonumber\\ +&\left.\sqrt{f_{N}(P_{1,0}\cdots P_{r-1,r-2}P_{r-1,r}\cdots P_{0,1}\bm{\eta})}-\sqrt{f_{N}(P_{2,1}\cdots P_{r-1,r-2}P_{r-1,r}\cdots P_{0,1}\bm{\eta})}\right)^{2}\nonumber\\
    \leq & (2r-1)\left\{\left(\sqrt{f_{N}(P_{0,1}\bm{\eta})}-\sqrt{f_{N}(\bm{\eta})}\right)^{2}+\left(\sqrt{f_{N}(P_{0,1}P_{1,2}\bm{\eta})}-\sqrt{f_{N}(P_{0,1}\bm{\eta})}\right)^{2}\right.\nonumber\\+&\left. \left(\sqrt{f_{N}(P_{2,3}P_{1,2}P_{0,1}\bm{\eta})}-\sqrt{f_{N}(P_{1,2}P_{0,1}\bm{\eta})}\right)^{2}+\ldots \right.\nonumber\\ +&\left.\left(\sqrt{f_{N}(P_{1,0}\cdots P_{r-1,r-2}P_{r-1,r}\cdots P_{0,1}\bm{\eta})}-\sqrt{f_{N}(P_{2,1}\cdots P_{r-1,r-2}P_{r-1,r}\cdots P_{0,1}\bm{\eta})}\right)^{2}\right\}\,.
\end{align}
Consequently, we find that 
\begin{align}\label{estimate-DELTA}
     &\frac{1}{2}\sum_{\bm{\eta}\in \Omega}3^{-N}\left(\sqrt{f_{N}(P_{0,r}\bm{\eta})}-\sqrt{f_{N}(\bm{\eta})}\right)^{2}\nonumber\\ &\leq 
     (2r-1)^{2}\frac{1}{2} \sum_{\bm{\eta}\in \Omega}3^{-N}\sum_{\alpha,\beta=0}^{2}\eta_{\alpha}^{0}\eta_{\beta}^{1}\left(\sqrt{f_{N}(\bm{\eta}_{\alpha,\beta}^{0,1})}-\sqrt{f_{N}(\bm{\eta})}\right)^{2},
\end{align}
where we used the  translation invariance of $f_N$. 
%By applying \eqref{sumSquare-squareSum} and by the definition of the marginal over the two blocks written in \eqref{marginal-two-blocks} we obtain
%%   \Delta_{k}(f_{N}^{r,k})&=\frac{1}{2}\sum_{\bm{\zeta},\bm{\xi}\in \Omega_{2k+1}}3^{-4k-2}\left(\sqrt{f_{N}^{r,k}(\bm{\zeta},\bm{\xi})^{0}}-\sqrt{f_{N}^{r,k}(\bm{\zeta},\bm{\xi})}\right)^{2}\nonumber
  %  \\&\leq
  %  \frac{1}{2}\sum_{\bm{\eta}\in \Omega}3^{-N}\left(\sqrt{f_{N}(P_{0,r}\bm{\eta})}-\sqrt{f_{N}(\bm{\eta})}\right)^{2}\,.
%\end{align}
Therefore, using \eqref{delta frk}, it follows that
\begin{align}
    \Delta_{k}(f_{N}^{r,k})
    \leq \frac{(2r-1)^{2}}{N}D(f_{N})\,.
\end{align}
%The factor $1/N$ comes from the translation invariance of $f_{N}$. 
Finally,  for fixed $c>0$, $\epsilon>0$ and $N\in \mathbb{N}$, we define the set 
\begin{align}
    A_{N,\epsilon}:=\left\{g\;:\;D_{k}^{1}(g)\leq \frac{2c k}{N^{2}},\;D_{k}^{2}(g)\leq \frac{2c k}{N^{2}},\;\Delta_{k}(g)\leq \epsilon^{2}c\right\}\,.
\end{align}
Arguing as in the proof of Lemma \ref{lemma-OBE} it follows that  
\begin{equation}
   \left\{f_{N}\;:\; D(f_{N})\leq c/N,\right\}\cap\left\{ r\;:\; |r|\leq \epsilon N\right\}\subset\left\{g\;:\;g\in A_{N,\epsilon}\right\}\,.
\end{equation}
The above inclusion relation implies that 
\begin{align}
    &\sup_{|r|\leq \epsilon N}\sup_{f_{N}\;:\; D(f_{N})\leq c/N}\sum_{\bm{\eta}\in \Omega}\left\lVert\left(\frac{1}{2k+1}\sum_{|z|\leq k}\eta_{1}^{z},\frac{1}{2k+1}\sum_{|z|\leq k}\eta_{2}^{z}\right)\right.\nonumber
    \\&\qquad\qquad\qquad\qquad\qquad\quad-\left.\left(\frac{1}{2k+1}\sum_{|z+r|\leq k}\eta_{1}^{z},\frac{1}{2k+1}\sum_{|z+r|\leq k}\eta_{2}^{z}\right)\right\rVert_{2}f_{N}(\bm{\eta})3^{-N}\nonumber
    \\ \leq &
    \sup_{g\in A_{N,\epsilon}}\sum_{\bm{\zeta},\bm{\xi}\in \Omega_{2k+1}}\left\lVert\left(\frac{1}{2k+1}\sum_{x=- k}^{k}\zeta_{1}^{x},\frac{1}{2k+1}\sum_{x=-k}^{k}\zeta_{2}^{x}\right)\right.\nonumber
    \\&\qquad\qquad\qquad\qquad\qquad\quad-\left.\left(\frac{1}{2k+1}\sum_{x=- k}^{k}\xi_{1}^{x},\frac{1}{2k+1}\sum_{x=-k}^{k}\xi_{2}^{x}\right)\right\rVert_{2}g(\bm{\zeta},\bm{\xi})3^{-4k-2}\,.
    \end{align}
By taking the limsup for $N\to \infty$ and $\epsilon\to 0$, by exploiting the compactness of the level sets of the Dirichlet-forms and by using the fact that for all $\bm{x}\in \mathbb{R}^{2}$ it holds $\lVert \bm{x}\rVert_{1}\geq \lVert \bm{x}\rVert_{2}$ \footnote{Here we denoted by $\lVert \bm{x}\rVert_{1}:=|x_{1}|+|x_{2}|$ and $\lVert \bm{x}\rVert_{2}:=\sqrt{|x_{1}|^{2}+|x_{2}|^{2}}$} we obtain 
\begin{align}
    &\sup_{g\;:\;D_{k}^{1}(g)=0\,,D_{k}^{2}(g)=0,\,\Delta_{k}(g)=0}\sum_{\bm{\zeta},\bm{\xi}\in \Omega_{2k+1}}\left\lVert\left(\frac{1}{2k+1}\sum_{x=- k}^{k}\zeta_{1}^{x},\frac{1}{2k+1}\sum_{x=-k}^{k}\zeta_{2}^{x}\right)\right. \nonumber\\&\left.\qquad \qquad \qquad \qquad \qquad \qquad \qquad\qquad\; -\left(\frac{1}{2k+1}\sum_{x=- k}^{k}\xi_{1}^{x},\frac{1}{2k+1}\sum_{x=-k}^{k}\xi_{2}^{x}\right)\right\rVert_{2}g(\bm{\zeta},\bm{\xi})3^{-4k-2}\nonumber 
    \\ \leq &
     \sup_{g\;:\;D_{k}^{1}(g)=0\,,D_{k}^{2}(g)=0,\,\Delta_{k}(g)=0}\sum_{\bm{\zeta},\bm{\xi}\in \Omega_{2k+1}}\left|\left(\frac{1}{2k+1}\sum_{x=- k}^{k}\zeta_{1}^{x}\right)-\left(\frac{1}{2k+1}\sum_{x=- k}^{k}\xi_{1}^{x}\right)\right|g(\bm{\zeta},\bm{\xi})3^{-4k-2}\nonumber
     \\+&
      \sup_{g\;:\;D_{k}^{1}(g)=0\,,D_{k}^{2}(g)=0,\,\Delta_{k}(g)=0}\sum_{\bm{\zeta},\bm{\xi}\in \Omega_{2k+1}}\left|\left(\frac{1}{2k+1}\sum_{x=-k}^{k}\zeta_{2}^{x}\right)-\left(\frac{1}{2k+1}\sum_{x=-k}^{k}\xi_{2}^{x}\right)\right|g(\bm{\zeta},\bm{\xi})3^{-4k-2}\,.
\end{align}
The set of distributions that satisfy $D_{k}^{1}(g)=D_{k}^{2}(g)=\Delta_{k}(g)=0$ is the set of uniform distributions over $\Omega_{2k+1}\times \Omega_{2k+1}$, with fixed numbers $k_{1}$ and $k_{2}$ of particles of species $1$ and $2$ respectively. We choose the function $\phi^{1}: \Omega_{2k+1}\to \mathbb{R}$ as 
\begin{align}
    \phi^{1}(\bm{\zeta})=\zeta_{1}^{0}\qquad \text{and}\qquad  \phi^{1}(\bm{\xi})=\xi_{1}^{0}, 
\end{align}
and the function $\phi^{2}:\Omega_{2k+1}\to \mathbb{R}$ as 
\begin{align}
  \phi^{2}(\bm{\zeta})=\zeta_{2}^{0}\qquad \text{and}\qquad   \phi^{2}(\bm{\xi})=\xi_{2}^{0}.
\end{align} 
Consequently, recalling \eqref{phi-tilda}, we have that 
\begin{align}
    \widetilde{\phi}^{1}(\alpha_{1},\alpha_{2})=\alpha_{1}\qquad \text{and}\qquad \widetilde{\phi}^{2}(\alpha_{1},\alpha_{2})=\alpha_{2}\,.
\end{align}
As a consequence, we obtain
\begin{align}
     &\uplim_{k\to \infty}\sup_{g\;:\;D_{k}^{1}(g)=0\,,D_{k}^{2}(g)=0,\,\Delta_{k}(g)=0}\sum_{\bm{\zeta},\bm{\xi}\in \Omega_{2k+1}}\left|\left(\frac{1}{2k+1}\sum_{x=- k}^{k}\zeta_{1}^{x}\right)-\left(\frac{1}{2k+1}\sum_{x=- k}^{k}\xi_{1}^{x}\right)\right|g(\bm{\zeta},\bm{\xi})3^{-4k-2}\nonumber
     \\&\qquad +\uplim_{k\to \infty}
      \sup_{g\;:\;D_{k}^{1}(g)=0\,,D_{k}^{2}(g)=0,\,\Delta_{k}(g)=0}\sum_{\bm{\zeta},\bm{\xi}\in \Omega_{2k+1}}\left|\left(\frac{1}{2k+1}\sum_{x=-k}^{k}\zeta_{2}^{x}\right)-\left(\frac{1}{2k+1}\sum_{x=-k}^{k}\xi_{2}^{x}\right)\right|g(\bm{\zeta},\bm{\xi})3^{-4k-2}\nonumber
      \\ &\leq 
      \uplim_{k\to \infty}\sup_{0\leq k_{1},k_{2}\leq 4k+2\;:\;k_{1}+k_{2}\leq 4k+2}\sum_{\bm{\zeta},\bm{\xi}\in \Omega_{2k+1}}\frac{\left|\left(\frac{1}{2k+1}\sum_{x=- k}^{k}\zeta_{1}^{x}\right)-\widetilde{\phi}^{1}\left(\frac{k_{1}}{4k+2},\frac{k_{2}}{4k+2}\right)\right|}{\binom{4k+2}{k_{1},k_{2}}}\nonumber
      \\ &\qquad +
     \uplim_{k\to \infty} \sup_{0\leq k_{1},k_{2}\leq 4k+2\;:\;k_{1}+k_{2}\leq 4k+2}\sum_{\bm{\zeta},\bm{\xi}\in \Omega_{2k+1}}\frac{\left|\left(\frac{1}{2k+1}\sum_{x=- k}^{k}\xi_{1}^{x}\right)-\widetilde{\phi}^{1}\left(\frac{k_{1}}{4k+2},\frac{k_{2}}{4k+2}\right)\right|}{\binom{4k+2}{k_{1},k_{2}}}\nonumber
      \\&\qquad+
    \uplim_{k\to \infty}  \sup_{0\leq k_{1},k_{2}\leq 4k+2\;:\;k_{1}+k_{2}\leq 4k+2}\sum_{\bm{\zeta},\bm{\xi}\in \Omega_{2k+1}}\frac{\left|\left(\frac{1}{2k+1}\sum_{x=- k}^{k}\zeta_{2}^{x}\right)-\widetilde{\phi}^{2}\left(\frac{k_{2}}{4k+2},\frac{k_{2}}{4k+2}\right)\right|}{\binom{4k+2}{k_{1},k_{2}}}\nonumber
      \\&\qquad+
      \uplim_{k\to \infty}\sup_{0\leq k_{1},k_{2}\leq 4k+2\;:\;k_{1}+k_{2}\leq 4k+2}\sum_{\bm{\zeta},\bm{\xi}\in \Omega_{2k+1}}\frac{\left|\left(\frac{1}{2k+1}\sum_{x=- k}^{k}\xi_{2}^{x}\right)-\widetilde{\phi}^{2}\left(\frac{k_{2}}{4k+2},\frac{k_{2}}{4k+2}\right)\right|}{\binom{4k+2}{k_{1},k_{2}}}\nonumber=0\,.
\end{align}
The last equality follows from Corollary \ref{corollary-EqEns}.
\end{document}